\theoremstyle{plain}
   \newtheorem{theorem}{Theorem}[section]
   \newtheorem{proposition}[theorem]{Proposition}
   \newtheorem{lemma}[theorem]{Lemma}
   \newtheorem{notation}[theorem]{Notation}
   \newtheorem{corollary}[theorem]{Corollary}
\theoremstyle{definition}
   \newtheorem{definition}{Definition}[section]
   \newtheorem{question}{Question}
   \newtheorem{example}{Example}[section] 
\theoremstyle{remark}
 \newtheorem{remark}{Remark}[section]
\newcommand{\R}{\mathbb{R}}
\newcommand{\C}{\mathbb{C}}
\newcommand{\Z}{\mathbb{Z}}
\newcommand{\A}{\mathscr{A}}
\newcommand{\LL}{\mathcal{L}}
\newcommand{\T}{\mathcal{T}}
\newcommand{\M}{\mathcal{M}}
\newcommand{\pp}{\mathbb{P}}
\newcommand{\conv}{conv}
\newcommand{\ch}{\mathrm{ch}}
\def\-{\raisebox{.75pt}{-}}
\newcommand{\xx}{1}
\newcommand{\yy}{1}
\newcommand\textoverset[3][]{\mathrel{\overset{{\eqmakebox[#1]{#2}}}{#3}}}
\title{Families of polytopes with rational linear precision in higher dimensions}
\author{Isobel Davies, Eliana Duarte, Irem Portakal, Miruna-\c Stefana Sorea}
\date{\today}
\begin{document}

\maketitle

\makeatletter
\def\blfootnote{\xdef\@thefnmark{}\@footnotetext}
\makeatother

\blfootnote{\emph{MSC2020 Subject Classification:} Primary: 52B20; Secondary: 14M25, 62R01, 65D17.}

\blfootnote{\emph{Key words and phrases:} lattice polytope, Horn parametrisation, rational linear precision, primitive collection, log-linear model,  maximum likelihood estimator, toric variety, staged tree.}

\begin{abstract}
In this article we introduce a new family of lattice polytopes with rational linear precision.
For this purpose, we define a new class of discrete statistical models that we call multinomial staged tree models. We prove that these models have rational
maximum likelihood estimators (MLE) and give a criterion for these models to be log-linear.
Our main result is then obtained by applying Garcia-Puente and Sottile's theorem that establishes a correspondence between
polytopes with rational linear precision  and log-linear models with rational MLE. Throughout this article we also study the interplay between the primitive collections of the
normal fan of  a polytope with rational linear precision and  the shape of the Horn matrix of its corresponding statistical model. Finally, we investigate lattice polytopes arising from toric multinomial staged tree models, in terms of the combinatorics of their tree representations.
\end{abstract}
\section{Introduction}
In Geometric Modelling, pieces of parametrised curves and surfaces are used as building blocks to describe geometric shapes in 2D and 3D. Some of the most widely used parametric units for this purpose are  B\'{e}zier curves, triangular B\'{e}zier surfaces and tensor product surfaces. These pieces of curves and surfaces are constructed using a
set of polynomial blending functions defined on the convex hull of a set of points $\A$,
together with a set of control points.
Taking as inspiration the theory of toric varieties and the form of the blending functions for the previous examples, Krasauskas  introduced the  more general notion of a 
\emph{toric patch} whose domain is a lattice polytope $P \subseteq \R^d$ \cite{krasauskas2002}. The blending functions, $\{\beta_{w,m}:P\to \mathbb{R}\}_{m\in \A}$, of a toric patch are constructed from the set of
lattice points $\A:=P\cap \Z^d$ and a vector of positive weights $w$ associated to each point in $\A$.

A significant difference between an arbitrary toric patch and one of the triangular
or tensor product patches is that its blending functions do not necessarily satisfy
the property of \emph{linear precision}. A collection of blending functions $\{\beta_{m}:P\to \mathbb{R}\}_{m\in \A}$ has linear precision if for any affine function $\Lambda:\mathbb{R}^d \to \mathbb{R}$,
\[\Lambda(u)=\sum_{m\in \A}\Lambda(m)\beta_{m}(u), \text{ for all } u \in P. \]
Thus, linear precision is the ability of the  blending functions to
replicate affine functions and it is desirable from the practical standpoint \cite{Garcia2010}. 
To decide if the collection of  blending functions associated to $(P,w)$ has linear precision it is necessary and sufficient to
check that the identity $p=\sum_{m\in \A}\beta_{w,m}(p)m$ holds for all $p\in P$ \cite[Proposition 11]{Garcia2010},
in this case we say
the pair $(P,w)$ has \emph{strict linear precision}.
If there exist rational blending functions  $\{\hat{\beta}_{w,m}:P\to \mathbb{R}\}_{m\in \A}$ that are nonnegative on $P$,
form a partition of unity, parametrise the same variety $X_{\A,w}$ as the blending functions $\{\beta_{w,m}:P\to \mathbb{R}\}_{m\in \A}$, and also have linear precision, we say the pair $(P,w)$ has \emph{rational linear precision}. 
It is an open problem, motivated by Geometric Modelling, to characterise all pairs $(P,w)$ that have 
rational linear precision  in dimension $d\geq 3$ \cite{krasauskas2002,Clarke2020}. 
The classification
of all such pairs in dimension $d=2$ is given in \cite{Bothmer2010}. 

Garcia-Puente and Sottile studied the property of rational linear precision
for toric patches by associating a scaled projective toric variety $X_{\A,w}$ to
the pair $(P,w)$ \cite{Garcia2010}. The variety $X_{\A,w}$  is the image
of the map $[w\chi]_{\A}: (\C^*)^d\to \pp^{n-1}$ defined by
$\mathbf{t}\mapsto[w_1 \mathbf{t}^{m_1}:w_2 \mathbf{t}^{m_2}:\ldots: w_s \mathbf{t}^{m_n}]$ where $\A=\{m_1,\ldots,m_n\}$. One of their main results states that a pair $(P,w)$ has rational linear precision
if and only if the variety $X_{\A,w}$, seen as a discrete statistical model, has rational maximum likelihood estimator (MLE). This result establishes a communication channel between Geometric Modelling and Algebraic Statistics. Thus, it is natural to use ideas from Algebraic Statistics 
to study the property of rational linear precision.

Models with rational MLE are algebraic varieties that admit a  parametrisation known as Horn uniformisation \cite{Duarte2020,huh2014}.
This parametrisation depends on a \emph{Horn matrix} $H$ and a coefficient
for each column of $H$.
In their recent study of moment maps of toric varieties \cite{Clarke2020}, Clarke and Cox  go one step further in strengthening the relationship between pairs $(P,w)$ with rational linear precision and  models $X_{\A,w}$ with rational MLE by using Horn matrices
to characterise all pairs $(P,w)$ that have strict linear precision. They propose the use of Horn matrices to study
polytopes with rational linear precision and state several questions and conjectures about the relationship between the Horn matrix of $X_{\A,w}$ and the primitive collections of the normal fan of $P$.

In this article we study the property of rational linear precision of pairs $(P,w)$
from the point of view of Algebraic Statistics.
Our main contribution is Theorem~\ref{thm:main}, which introduces a new family of polytopes (with associated weights) that has rational linear precision. We construct this family from a subclass of discrete statistical models introduced in Section~\ref{multinomialstagedtreemodels} that we call \emph{multinomial staged trees}. Looking at
specific members of this family in 3D, we settle some of the questions raised in \cite{Clarke2020} related to Horn matrices and primitive collections.

This paper is structured as follows: In Sections~\ref{notation}-\ref{subsec:rational and mle} we provide background material on rational linear precision, discrete statistical models with rational MLE and Horn matrices. In Section~\ref{subsec:primitive} we state Questions~\ref{q1} and \ref{q2}  which guided our investigations related to  Horn matrices and primitive collections. These questions are followed by a quick outline referring to the places in this article where they are addressed.
 In  Section~\ref{sec:2Dand3Dpolytopes}, we characterise the shape of the Horn matrix for pairs $(P,w)$ in $2D$. We also present a family 
of pairs $(P,w)$ in $3D$ that has rational linear precision and explain  several aspects of this family that relate to Questions~\ref{q1} and \ref{q2}.
In Section~\ref{subsec:defmultitrees} we define multinomial staged tree models, 
we prove they have rational MLE in Section~\ref{subsec:rationalMLE}  and we
characterise the subclass of these models that are toric varieties in Section~\ref{sec:toricmultitrees}. These results lead to our main theorem, Theorem~\ref{thm:main}. Finally, in Section~\ref{sec:toricmultinomialtrees}, we show that the examples from Section~\ref{sec:2Dand3Dpolytopes} are all multinomial staged trees and prove our conjectures about the relationship between the combinatorics of the trees and  primitive collections.  

\section{Preliminaries}
\label{sec:preliminaries}
We assume the reader is familiar with introductory material on computational algebraic geometry and toric geometry at the level of \cite{cox2015} and \cite{cox2011}. 
\subsection{Notation and conventions}\label{notation}
We consider pairs $(P,w)$ where  $P$ is a $d$-dimensional lattice polytope in $\R^d$, $\Z^d$ is the fixed lattice, 
$\A=P\cap \Z^d=\{m_1,\ldots,m_n\}$ and $w$ is a vector of  positive weights indexed by $\A$.
Fix $n_1,\ldots, n_r$ to be the inward facing primitive normal vectors of $P$ corresponding to the
facets $F_1,\ldots, F_r$ of $P$ and let $a_1,\ldots,a_r$ be the corresponding
integer translates in the facet presentation of $P$ given by $ P =\{ p\in \R^d : \langle p,n_i
\rangle \geq -a_i ,\forall
i\in \{1,\ldots,r\} \} $. The lattice distance to the face $F_i$ evaluated at  $p\in \R^d$  is
\[
  h_{i}(p) = \langle p, n_i \rangle + a_i, \;\; i=1,\ldots, r,
\]
we record each of these values in
the vector $h(p)=(h_1(p),\ldots,h_r(p))$
The value $h_{i}(m_j)$ is the lattice distance from the $j$-th lattice point
to the $i$-th facet. The matrix with $ij$ entry equal to $h_{i}(m_j)$ 
is the \emph{lattice distance matrix} of $\A$. 
We will often consider products of linear forms or variables
whose exponents are given by vectors. For vectors
$v=(v_1,\ldots, v_N ), w =(w_1,\ldots, w_N)$ we use $v^w$  to denote the product $\prod_{i=1}^N v_i ^{w_i}$ and use the convention that
$0^0=1$. Common choices for $v, w$ in the upcoming sections are the vectors $\mathbf{t}=(t_1,\ldots, t_d)$, $h(p)$ and $h(m), m\in \A$.
If $P$ is a polytope and $a\geq 1$ is an integer, $aP$ denotes its dilation. 

\subsection{Rational linear precision}
In this section we follow closely the exposition in \cite{Clarke2020}. A more elementary introduction to this
topic is available in \cite[Chapter 3]{cox2020}.

\begin{definition} \label{def:patches}
Let $P\subseteq \R^d$ be a full dimensional polytope and let $w=(w_1,\ldots,w_n)$ be a vector
of positive weights. 
\begin{enumerate}
\item For $1\leq j \leq n$ and $p\in P$, 
$\beta_{j}(p):=h(p)^{h(m_j)}=\prod_{i=1}^r h_{i}(p)^{h_i(m_j)}.$
    \item The functions $\beta_{w,j}:=w_j \beta_{j}/\beta_{w}$ are the \emph{toric blending functions}
    of $(P,w)$, where $\beta_{w}(p):= \sum_{j=1}^n w_j \beta_{j}(p).$
    \item Given \emph{control points} $\{Q_{j}\}_{1\leq j \leq n} \in \R^\ell$,   the \emph{toric patch} $F:P\to \mathbb{R}^{\ell}$ is defined by
    \begin{equation}\label{eq:tautologicalpatch}
    p \mapsto \frac{1}{\beta_{w}(p)}\sum_{j=1}^n w_j \beta_{j}(p)Q_j.    
    \end{equation}
    
\end{enumerate}
\end{definition}
\noindent In part $(3)$ of the previous definition, it is natural to choose the set of control points to be  $\A$.
\begin{definition} \label{def:patches2}Let $(P,w)$ be as in Definition~\ref{def:patches}.
\begin{enumerate}
\item The \emph{tautological patch} $K_{w}\!:P\!\to\! P$ is the toric patch (\ref{eq:tautologicalpatch}) where $\{Q_j\!=\!m_j\}_{1\leq j\leq n}$.

    \item The pair $(P,w)$ has \emph{strict linear precision} if $K_{w}$ is the identity on $P$, that is \[ p = \frac{1}{\beta_{w}(p)}\sum_{j=1}^n w_j \beta_{j}(p)m_j, \text{  for all } p\in P.\]
    \item The pair $(P,w)$ has \emph{rational linear precison} if there are rational functions
    $\hat{\beta_1},\ldots, \hat{\beta_n}$ on $\C^d$
    satisfying: 
    \begin{enumerate}
        \item $\sum_{j=1}^n\hat{\beta}_j=1$ as rational functions on $\C^d$.
        \item The map $\hat{\beta}:\C^d \dasharrow X_{\A,w}\subset \pp^{n-1}$, $\mathbf{t}\to (\hat{\beta}_1(\mathbf{t}),
        \ldots, \hat{\beta}_n(\mathbf{t}))$ is a rational parametrisation of $X_{\A,w}$.
        \item For every $p\in P \subset \C^d$, $\hat{\beta}_j(p)$ is defined and is a nonnegative real number.
        \item $\sum_{j=1}^{n}\hat{\beta}_j(p)m_j=p$ for
        all $p\in P$.
    \end{enumerate}
\end{enumerate}
\end{definition}

\begin{remark}
We are interested in the property of 
linear precision. By \cite[Proposition 2.6]{Garcia2010}, the blending functions $\{\beta_{w,j}: 1\leq j\leq n \}$
have linear precision if and only if  the pair $(P,w)$ has strict linear precision.  Rational
 linear precision requires the existence of
 rational functions $\{\hat{\beta_j}:  P\to \R : 1\leq j\leq n\}$ that have strict linear precision,  and that are related to the blending functions of $(P,w)$ via $3(b)$ in Definition~\ref{def:patches2}.
\end{remark}

\begin{remark} \label{rmk:newtonpolynomial}
An alternative way to specify a pair $(P,w)$ is by using a homogeneous polynomial $F_{\A,w}$ whose dehomogenisation $f_{\A,w}= \sum_{j=1}^n w_j \mathbf{t}^{m_i}$ encodes the weights in the coefficients and the lattice points in $\A$ as exponents. We  use this notation in Section~\ref{sec:2Dand3Dpolytopes} to describe toric patches in  2D and 3D. 
\end{remark}

\begin{remark} \label{rmk:rationalpower}
If $(P,w)$ has 
rational linear precision then $(aP,\tilde{w})$, $a\geq 1$, also has this property where $\tilde{w}$ is the vector of coefficients of 
$(f_{\A,w})^a$. See \cite[Lemma 2.2]{Bothmer2010}.
\end{remark}
\begin{example} \label{ex:trapex}
Consider the trapezoid $P=\conv((0,0),(3,0),(1,2),(0,2))$, with ordered set
of lattice points  $\A$ and vector of weights $w$, given as follows:
\begin{align*}
\A&=\{(0,2),(1,2),(0,1),(1,1),(2,1),(0,0),(1,0),(2,0),(3,0)\} \\
w&= (1,1,2,4,2,1,3,3,1).
\end{align*}
The polynomial $f_{\A,w}(s,t)=(1+s)(1+s+t)^2$ encodes $(P,w)$. 
The lattice distance functions for the facets of  $P$ are:
\begin{align*}
    h_1(s,t) &= s, & h_2(s,t)&=t, & h_3(s,t)&=3-t-s,  &h_4(s,t)&=2-t.
\end{align*}
The toric blending functions for $(P,w)$ are
\[\beta_{w,(i,j)}(s,t)={2\choose j}{3-j \choose i}\frac{ s^i t^j (3-s-t)^{3-i-j}(2-t)^{2-j}}{6-4t+t^2}, \text{ where } (i,j)\in \A.\]
The pair $(P,w)$ does not have strict linear precision, but it has rational linear precision. By Proposition~\ref{prop:trapezoid2D} the parametrisation of the patch which has linear
precision is given by:
\[\hat{\beta}_{w,(i,j)}(s,t)={2\choose j}{3-j \choose i}\frac{ s^i t^j (3-s-t)^{3-i-j}(2-t)^{2-j}}{4(3-t)^{3-j}}, \text{ where } (i,j)\in \A.\]
\end{example}

\begin{example}
Let $\Delta_d=\{x\in \mathbb{R}^d: x_1+\dotsm+x_d \leq 1,\; x_i\geq 0\}$ be the standard simplex in $\mathbb{R}^d$ and $k\Delta_d$ be its dilation by the integer $k\geq 1$. To
a point $m=(a_1,\ldots,a_d)\in \A=k\Delta_d \cap \mathbb{Z}^d$ we associate the weight
\[w_{m}={k\choose m}={k\choose k-|m|,a_1,\ldots, a_d}, \text{ where } |m|=a_1+\cdots+a_d.\]
The pair $(k\Delta_d,w)$ has strict linear precision, see \cite[Example 4.7]{Garcia2010}. By \cite[Example 4.6]{Clarke2020}, the product of two pairs, $(P,w)$ and $(Q,\tilde{w})$, with strict linear precision also has
strict linear precision. Hence, \emph{the B\'ezier simploids} \cite{Derose93}, which are polytopes of the form $k_1\Delta_{d_1}\times \cdots \times k_r \Delta_{d_r}$ for positive integers $k_1,\ldots, k_{r},n_1,\ldots n_r$, have strict linear precision. Conjecture 4.8 in \cite{Clarke2020} states that these are the only polytopes with strict linear precision.
\end{example}

\subsection{Discrete statistical models with rational MLE}
A probability distribution of a discrete random variable $X$ with outcome space
 $\{1,\ldots,n\}$ is a vector $(p_1,\ldots,p_{n})\in \mathbb{R}^{n}$ where $p_i=P(X=i)$, $i\in \{1,\ldots,n\}$, $p_i\geq 0$ and $\sum_{i=1}^n p_i=1$. 
The open probability simplex \[\Delta_{n-1}^{\circ}=\{(p_1,\ldots,p_n)\in \mathbb{R}^{n}\,|\, p_i> 0 ,p_1+\cdots+p_n=1\}\]
consists of all  strictly positive probability distributions for a discrete random variable with $n$ outcomes. A discrete statistical model $\mathcal{M}$
is a subset  of $\Delta_{n-1}^{\circ}$.

Given a set $\mathcal{D}=\{X_1,\ldots, X_N\}$ of independent and identically distributed observations   of $X$, we let $u=(u_1,\ldots,u_n)$ be the vector where $u_i$ 
is the number of times the outcome $i$ appears in $\mathcal{D}$. 
The likelihood function $L(p,u): \mathcal{M}\to \mathbb{R}_{\geq 0}$
defined by $(p_1,\ldots, p_n)\mapsto \prod p_{i}^{u_i}$ records the probability of observing the set $\mathcal{D}$. The \emph{maximum likelihood estimator} (MLE) of the model $\mathcal{M}$ is the function $\Phi: \mathbb{R}^{n}
\to \mathcal{M}$ that sends each vector $(u_1,\ldots, u_n)$ to the maximiser of $L(p,u)$, i.e.
\[\Phi(u):= \mathrm{arg} \max L(p,u).\]
For arbitrary  $\mathcal{M}$, the problem of estimating  $\mathrm{arg} \max L(p,u)$ is a difficult one. 
However, for special families, such as discrete exponential families,
there are theorems that guarantee the existence and uniqueness of $\mathrm{arg} \max L(p,u)$ when $u$ has nonzero entries. We are interested in the
case where $\Phi$ is a rational function of $u$.
\begin{definition}
Let $\mathcal{M}$ be a discrete statistical model with MLE $\Phi: \R^{n}\to \mathcal{M}, u\mapsto \hat{p}$. 
The model $\mathcal{M}$ has \emph{rational MLE}
if the coordinate functions of $\Phi$ are rational functions in $u$.
\end{definition}

\begin{example} \label{ex:independence}
Consider the model $\M$ of two independent binary random variables $X,Y$, with outcome set $\{0,1\}$ and $p_{ij}=P(X=i,Y=j)$. This
model is the set of all points $(p_{00},p_{01},p_{10},
p_{11})$ in $\Delta^{\circ}_{3}$ that satisfy the equation $p_{00}p_{11}-p_{10}p_{01}=0$. The model has rational MLE $\Phi: \R^{4}\to
\M$ where \[(u_{00},u_{01},u_{10},u_{11})\mapsto \left( 
\frac{u_{0+}u_{+0}}{u_{++}^2},\frac{u_{0+}u_{+1}}{u_{++}^2},\frac{u_{1+}u_{+0}}{u_{++}^2}, \frac{u_{1+}u_{+1}}{u_{++}^2}\right)\]
and $u_{i+}= u_{i0}+u_{i1}, u_{+j}= u_{0j}+u_{1j},u_{++}=\sum_{i,j\in\{0,1\}}u_{ij}$.
\end{example}
\begin{definition}
A \emph{Horn matrix} is an integer matrix whose column sums are equal to zero. Given a Horn matrix $H$, with columns $h_1,\ldots, h_n$,  and a vector $\lambda\in \mathbb{R}^n$, the Horn parameterisation $\varphi_{(H,\lambda)}:\mathbb{R}^{n}\to \mathbb{R}^{n}$ is the rational map
 given by
\[u\mapsto (\lambda_1 (Hu)^{h_1},\lambda_{2}(Hu)^{h_2},\ldots, \lambda_n (Hu)^{h_n}).\]

\end{definition}
\begin{example}
The MLE $\Phi$ in Example~\ref{ex:independence} is given by a Horn parametrisation  $\varphi_{(H,\lambda)}$, where 
\[ u=\begin{pmatrix}
u_{00} \\ u_{01}\\ u_{10} \\ u_{11}
\end{pmatrix},\;\;
H =\begin{pmatrix}
1 & 1 & 0 & 0 \\
0& 0& 1& 1 \\
1& 0& 1& 0 \\
0& 1& 0& 1 \\
-2&-2&-2&-2
\end{pmatrix}, \;\;
Hu= 
\begin{pmatrix}
u_{0+}\\
u_{1+}\\
u_{+0}\\
u_{+1}\\
-2u_{++}\\
\end{pmatrix},\;\; \text{ and }
\lambda= (4,4,4,4).
\]
\end{example}

\begin{definition}
We say that $(H,\lambda)$ is a Horn pair if: (1) the sum of the coordinates of $\varphi_{(H,\lambda)}$ as rational functions in $u$ is equal to $1$ and (2) the map $\varphi_{(H,\lambda)}$ is defined for all positive vectors and it sends these to positive vectors in $\mathbb{R}^{r}$. 
\end{definition}

\begin{theorem}{\cite[Theorem 1]{Duarte2020}}\label{thm:horn}
A discrete statistical model $\M$ has rational MLE $\Phi$ if and only if there exists a Horn pair $(H,\lambda)$ such that 
    $\M$ is the image of the Horn parametrisation $\varphi_{(H,\lambda)}$ restricted to the
    open orthant $\R_{>0}^{n}$ and $\Phi = \varphi_{(H,\lambda)}$ on $\R_{>0}^{n}$. 
\end{theorem}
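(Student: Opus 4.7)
The plan is to prove the two implications separately, with the converse being the substantive content. Both directions hinge on carefully relating the critical equations for $\log L(p,u)=\sum_i u_i\log p_i$ on $\M$ to the combinatorial structure of the Horn matrix $H$.

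For the ``if'' direction, suppose $(H,\lambda)$ is a Horn pair with columns $h_1,\ldots,h_n$. By the definition of Horn pair, $\varphi_{(H,\lambda)}(u)\in \Delta_{n-1}^\circ$ for every $u\in \R_{>0}^n$, so its image $\M$ is a legitimate statistical model. To show that $\hat p=\varphi_{(H,\lambda)}(u)$ maximises $L(p,u)$ on $\M$, I would first verify the critical-point (score) equations. Parametrising the tangent space via $d\varphi_{(H,\lambda)}$ and using $\log\hat p_i=\log\lambda_i+\sum_j h_{ji}\log(Hu)_j$, a direct computation gives
\[
\sum_i \frac{u_i}{\hat p_i}\,\partial_{u_k}\hat p_i \;=\; \sum_j \frac{H_{jk}}{(Hu)_j}\sum_i h_{ji}u_i \;=\; \sum_j H_{jk}\;=\;0,
\]
where the last equality is precisely the column-sum-zero defining property of a Horn matrix. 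To upgrade from critical point to global maximum, I would argue that a rational MLE forces the maximum likelihood degree of $\M$ to equal $1$, so this is the unique critical point in $\M\cap\R_{>0}^n$; by continuity of $\log L$ on the closed simplex and a boundary analysis, it must be the global maximum.

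For the ``only if'' direction, suppose $\M$ has rational MLE $\Phi\colon u\mapsto \hat p(u)$, and write each coordinate in lowest terms as $\hat p_i(u)=N_i(u)/D_i(u)$. The strategy is to extract the Horn structure from these rational expressions. Using the score equations, which now hold identically in $u$ on $\R_{>0}^n$, together with $\sum_i \hat p_i(u)\equiv 1$, I would show that each $\hat p_i$ factors as $\lambda_i$ times a product of (possibly negative) integer powers of a common finite list of linear forms $\ell_1(u),\ldots,\ell_r(u)$. Positivity of $\hat p_i$ on $\R_{>0}^n$ forces each $\ell_j$ to have nonnegative coefficients, and after clearing denominators one may normalise to integer coefficients; this gives the entries $H_{ji}=$ coefficient of $u_i$ in $\ell_j$, while the exponent of $\ell_j$ in $\hat p_i$ becomes $h_{ji}$. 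Finally, the identity $\sum_i \hat p_i(u)\equiv 1$ compared with the multiplicative structure of the $(Hu)^{h_i}$ forces the column sums of $H$ to vanish, yielding a genuine Horn pair.

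The main obstacle is the structural factorisation in the converse: establishing that the numerators and denominators of all $\hat p_i$ admit a \emph{simultaneous} decomposition into a common set of linear forms with integer coefficients. This is essentially the content of Huh's classification of varieties of maximum likelihood degree one \cite{huh2014}, which already singles out Horn-type parametrisations. A self-contained proof would combine unique factorisation in $\C[u_1,\ldots,u_n]$ with a Galois-symmetry or positivity argument to descend the factorisation from $\C$ to $\Z$; this descent, together with pinning down that the same list of linear forms works uniformly for every $\hat p_i$, is the most delicate step of the whole theorem.
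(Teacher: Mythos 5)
This theorem is not proved in the paper at all: it is imported verbatim as \cite[Theorem 1]{Duarte2020}, so there is no in-paper argument to compare against. Measured against the proof in that reference, your outline follows essentially the same route: the ``if'' direction is handled by verifying the score equations, where your computation $\sum_i (u_i/\hat p_i)\,\partial_{u_k}\hat p_i=\sum_j H_{jk}=0$ correctly isolates the column-sum-zero property of $H$ as the reason $\varphi_{(H,\lambda)}(u)$ is a critical point of the likelihood on $\M$; and the ``only if'' direction is delegated to Huh's classification of varieties of maximum likelihood degree one, exactly as in \cite{Duarte2020} and \cite{huh2014}.

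Two caveats. First, in the ``if'' direction your upgrade from critical point to global maximum is circular as written: you say ``a rational MLE forces the maximum likelihood degree of $\M$ to equal $1$,'' but at that stage you have not yet established that $\M$ has rational MLE --- that is the conclusion you are trying to reach. The correct source of the ML-degree-one statement is Huh's theorem applied in the forward direction: a variety admitting a Horn uniformisation has ML degree one, hence for generic $u$ the likelihood has a unique critical point on $\M\cap\R_{>0}^{n}$; combined with the standard compactness and boundary argument (the maximum over the closure is attained away from the coordinate hyperplanes when $u>0$) this identifies $\varphi_{(H,\lambda)}(u)$ as the maximiser. Second, your ``only if'' direction is a description of what must be proved rather than a proof: the simultaneous factorisation of all $\hat p_i$ into integer powers of a common finite list of linear forms, the descent to integer coefficients, and the positivity bookkeeping are precisely the substance of \cite[Theorem 1]{Duarte2020} building on \cite{huh2014}. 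You correctly flag this as the delicate step, but you do not supply it, so as a self-contained argument the proposal is incomplete; as a reduction to the literature it is faithful to how the result is actually established.
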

It is possible that two Horn parametrisations $\varphi_{(H,\lambda)}$ and $\varphi_{(\tilde{H},\tilde{\lambda})}$
are equal even if $H\neq \tilde{H}$ and $\lambda\neq \tilde{\lambda}$. A Horn matrix $H$ is \emph{minimal} if it has no zero rows and no two rows are linearly dependent. By \cite[Proposition 6.11]{Clarke2020}
there exists a unique, up to permutation of the rows, minimal Horn matrix that
defines $\varphi_{(H,\lambda)}$. Any other pair $(H,\lambda)$  that defines the same Horn parametrisation  may be transformed into one where $H$ is a minimal Horn matrix; this is done by adding collinear rows, deleting zero rows and adjusting the vector $\lambda$ accordingly, see \cite[Lemma 3]{Duarte2020}. 
We end this section by noting that  \cite[Proposition 23]{Duarte2020} states that if $(H,\lambda)$ is a minimal Horn pair, then  every row of $H$ has either all entries  greater than or equal zero or all entries
less than or equal to zero. We call the submatrix of  $H$ that consists of all rows with nonnegative entries, the positive part of $H$, and its complement the negative part of $H$.

\subsection{The links between Algebraic Statistics and Geometric Modelling} \label{subsec:rational and mle}
The links referred to in the title of this section are Theorem~\ref{thm:rational} and Theorem~\ref{thm:strictlinhorn}. 

Given a pair $(P,w)$, the  \emph{scaled projective toric variety} $X_{\A,w}$ is the image
of the map $[w\chi]_{\A}: (\C^*)^d\to \pp^{n-1}$ defined by
$\mathbf{t}\mapsto[w_1 \mathbf{t}^{m_1}:w_2 \mathbf{t}^{m_2}:\ldots: w_n \mathbf{t}^{m_n}]$. To consider
the maximum likelihood estimation problem in the realm of complex
algebraic geometry we consider the variety
$W=V(x_1\ldots x_n(x_1+\dotsm+x_n))\subset \pp^{n-1}$ and the
map \[X_{\A,w}\setminus W \to (\C^*)^n,\;\;[x_1: \ldots: x_n]\mapsto \frac{1}{x_1+\cdots
+x_n}(x_1,\ldots,x_n).\] The image of this map is closed and
denoted by $Y_{\A,w}$. We call  $Y_{\A,w}$ a \emph{scaled very affine toric variety}. The set $\M_{\A,w} = Y_{\A,w}\cap \R^{n}_{>0}$ is a subset of
the open simplex $\Delta_{n-1}^{\circ}$ and as such it is a statistical model.
This class of models, of the form $\M_{\A,w},$ are known as \emph{log-linear models}.

\begin{remark}
The variety $Y_{\A,w}$ admits two parameterisations, one by monomials and one by toric blending functions \cite[Proposition 5.2]{Clarke2020}. These
are
\begin{align}
\overline{w_{\chi_\A}}: \C^d \dasharrow Y_{\A,w}, & \;\;\;\;\mathbf{t} \mapsto \left( \frac{w_1 \mathbf{t}^{m_1}}{\sum_{j=1}^sw_j\mathbf{t}^{m_j}},\ldots, \frac{w_n \mathbf{t}^{m_n}}{\sum_{j=1}^sw_j\mathbf{t}^{m_j}}\right),\label{monomialparametrisation}\\
\overline{w_{\beta_{\A}}}: \C^d \dasharrow Y_{\A,w},& \;\;\;\;\mathbf{t} \mapsto \left( \frac{w_1\beta_1(\mathbf{t})}{\beta_{w}(\mathbf{t})},\ldots,\frac{w_n\beta_n(\mathbf{t})}{\beta_{w}(\mathbf{t})} \right).\nonumber
\end{align}
\end{remark}

We now consider the maximum likelihood estimation problem for log-linear models. Given a vector of counts $u$, we let $\overline{u}:=u/|u|\in \Delta_{n-1}^{\circ}$ be the empirical distribution, where $|u|= \sum u_j$. We define the tautological map $\tau_{\A}$ following the convention in \cite{Clarke2020},
\begin{align}
\tau_{\A}:\Delta_{n-1}^{\circ}\to P^{\circ},\qquad(\overline{u}_1,\ldots, \overline{u}_n)\mapsto \sum_{j=1}^{n}\overline{u}_jm_j.\label{tautologicalmap}    \end{align}
The maximum likelihood estimate of $\overline{u}$ for
the model $\M_{\A,w}$ exists and it is unique whenever all entries of $\overline{u}$ are positive. 
\begin{theorem}
\cite[Corollary 7.3.9]{Sullivant2019}
The maximum likelihood estimate in
$\M_{\A,w}$ for the empirical distribution  $\overline{u}\in \Delta_{n-1}^{\circ}$
is the unique point $\hat{p}\in \M_{\A,w}$ that satisfies  $\tau_{\A}(\hat{p})=\tau_{\A}(\overline{u})$.
\end{theorem}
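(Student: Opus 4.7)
The plan is to reduce the maximum likelihood estimation problem on $\M_{\A,w}$ to a concave optimization problem in the parameter $\mathbf{t}$ of the monomial parametrisation (\ref{monomialparametrisation}), derive the critical point equation, and recognize it as the moment-matching condition $\tau_{\A}(\hat p)=\tau_{\A}(\overline u)$. Writing $p_j(\mathbf{t})=w_j\mathbf{t}^{m_j}/Z(\mathbf{t})$ with $Z(\mathbf{t})=\sum_k w_k\mathbf{t}^{m_k}$ and substituting $s_i=\log t_i$, the log-likelihood (up to an additive constant independent of $s$) becomes
\[
\ell(s)=\sum_{j=1}^n u_j\langle m_j,s\rangle-|u|\,\log\!\Bigl(\sum_{k=1}^n w_k e^{\langle m_k,s\rangle}\Bigr).
\]
Differentiating in $s$ I would obtain
\[
\nabla_s\ell(s)=\sum_{j=1}^n u_j m_j-|u|\sum_{k=1}^n p_k(s)\,m_k=|u|\bigl(\tau_{\A}(\overline u)-\tau_{\A}(p(s))\bigr),
\]
so the first-order condition $\nabla_s\ell=0$ is precisely $\tau_{\A}(p(s))=\tau_{\A}(\overline u)$.

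Second, I would establish uniqueness via concavity. Since $\log Z(s)$ is a log-sum-exp of linear forms in $s$, it is convex; more concretely, its Hessian equals the covariance matrix of the distribution $p(s)$, so $\ell$ is concave on $\R^d$. Strict concavity holds modulo the kernel of the linear map $s\mapsto(\langle m_j,s\rangle)_j$, and this kernel is exactly the ambiguity of the monomial parametrisation, so the image distribution $\hat p$ on $\M_{\A,w}$ is uniquely determined by any critical point. This shows that \emph{at most one} $\hat p\in\M_{\A,w}$ can satisfy the moment condition, and whenever such a point exists it is a global maximum of $L(p,u)$.

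The main obstacle is existence of the MLE, since $\ell$ is defined on the non-compact domain $\R^d$ and could in principle fail to attain its supremum. The standard route, which I would follow, is Birch's theorem: the map $s\mapsto \tau_{\A}(p(s))$ is the moment map of the very affine toric variety $Y_{\A,w}$, and its image is exactly the relative interior $P^{\circ}$ of the polytope $P=\mathrm{conv}(\A)$. Since $\overline u$ has strictly positive entries, $\tau_{\A}(\overline u)=\sum_j\overline u_j m_j$ lies in $P^{\circ}$, hence there is a preimage $\hat s$ with $\tau_{\A}(p(\hat s))=\tau_{\A}(\overline u)$, and concavity of $\ell$ promotes this critical point to the global maximiser. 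The proper argument for surjectivity of the moment map reduces to showing that $\ell(s)\to -\infty$ whenever $s$ escapes to infinity in any direction not in the kernel (using that $\tau_{\A}(\overline u)$ is interior), which guarantees the supremum is attained. Combining these three ingredients — the gradient computation, concavity, and surjectivity of the moment map onto $P^{\circ}$ — yields the claim.
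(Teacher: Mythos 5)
The paper does not prove this statement at all --- it is quoted verbatim from \cite[Corollary 7.3.9]{Sullivant2019} --- so there is no internal proof to compare against. Your argument is the standard proof of Birch's theorem for log-linear models, and it is essentially correct: the gradient computation identifying the critical-point equation with $\tau_{\A}(p(s))=\tau_{\A}(\overline{u})$ is right; the Hessian of $\log Z$ is the covariance of the lattice points under $p(s)$, which is positive definite because $P$ is full-dimensional (so the $m_j$ affinely span $\R^d$), giving strict concavity up to the global rescaling ambiguity and hence uniqueness of $\hat p$; and the coercivity argument for existence works because $\langle \tau_{\A}(\overline{u}),v\rangle < \max_j\langle m_j,v\rangle$ for every $v\neq 0$ when $\tau_{\A}(\overline{u})$ lies in the interior of $P$, which it does since $\overline{u}$ is strictly positive. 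The only step you leave implicit is the reduction of the optimization over $\M_{\A,w}=Y_{\A,w}\cap\R^n_{>0}$ to optimization over the parameter $s$: since $Y_{\A,w}$ is defined as a closure, one must know that its strictly positive points are exactly the image of the positive torus under the monomial parametrisation (the standard fact that the positive part of a toric variety is parametrised by $\R^d_{>0}$, equivalently that the moment map restricts to a homeomorphism onto $P^{\circ}$). With that remark supplied, the proof is complete and matches the textbook treatment the authors cite.
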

In the Algebraic Statistics literature, models with rational MLE are also known as   models with \emph{maximum likelihood degree}  equal to $1$.
Even though the previous theorem guarantees the existence and uniqueness of the MLE, it is not true that every log-linear model has rational MLE. 
We refer the reader to \cite{Amendola2019} for several examples of log-linear models that do not have rational MLE, or equivalently for examples of models with maximum likelihood degree greater than $1$. We end this section by recalling two theorems that connect models
with rational MLE and pairs with rational linear precision.

\begin{theorem} \cite[Proposition 5.1]{Garcia2010}\label{thm:rational}
The pair $(P,w)$ has rational linear precision if and only if
the model $\M_{\A,w}$ has rational MLE.
\end{theorem}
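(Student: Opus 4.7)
The plan is to show that both rational linear precision and rational MLE are equivalent reformulations of the rationality of the inverse of the tautological map $\tau_{\A}$ restricted to $\M_{\A,w}$, which is a bijection onto $P^{\circ}$ by the preceding Birch's theorem \cite[Corollary 7.3.9]{Sullivant2019}.

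For the forward direction, assume $(P,w)$ has rational linear precision via rational functions $\hat{\beta}_1,\ldots,\hat{\beta}_n$ on $\C^d$. I would define $\Phi:\R_{>0}^n\to\M_{\A,w}$ by $\Phi(u):=(\hat{\beta}_1(p),\ldots,\hat{\beta}_n(p))$ with $p=\tau_{\A}(\overline{u})=\sum_{j=1}^n (u_j/|u|)m_j$. Rationality of $\Phi$ in $u$ is automatic, as $p$ has rational coordinate expressions in $u$ and each $\hat{\beta}_j$ is rational on $\C^d$. Properties (a)--(c) of Definition~\ref{def:patches2}(3) place $\Phi(u)$ inside $Y_{\A,w}\cap\Delta_{n-1}^{\circ}=\M_{\A,w}$, and property (d) yields $\tau_{\A}(\Phi(u))=\sum_j \hat{\beta}_j(p)m_j=p=\tau_{\A}(\overline{u})$. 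Uniqueness of the MLE via Birch's theorem then identifies $\Phi(u)$ with the MLE.

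For the converse, given a rational MLE $\Phi$, the Birch characterisation $\tau_{\A}(\Phi(u))=\tau_{\A}(\overline{u})$ forces $\Phi(u)$ to depend on $u$ only through $\tau_{\A}(\overline{u})$, so $\Phi$ factors as $\psi\circ\tau_{\A}\circ(\cdot/|\cdot|)$ for a well-defined map $\psi:P^{\circ}\to\M_{\A,w}$. To express $\psi$ rationally in $p$, I would fix $d+1$ affinely independent lattice points in $\A$ and take their barycentric coordinates (zero-padded to a vector in $\C^n$ of coordinate sum $1$) as an affine section $\sigma:\C^d\to\C^n$ of $\tau_{\A}\circ(\cdot/|\cdot|)$. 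Setting $\hat{\beta}_j(p):=\Phi_j(\sigma(p))$ then produces rational functions on $\C^d$, which are independent of the section chosen because $\Phi$ is constant on the fibres of $\tau_{\A}\circ(\cdot/|\cdot|)$. The four conditions of Definition~\ref{def:patches2}(3) transfer directly from the corresponding properties of $\Phi$: normalisation and positivity on $\Delta_{n-1}^{\circ}$ give (a) and (c); surjectivity onto $\M_{\A,w}\subset X_{\A,w}$ gives (b); and Birch's identity evaluated at $u=\sigma(p)$ gives (d) as $\sum_j\hat{\beta}_j(p)m_j=p$.

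The subtle point I expect is the rationality step in the converse: one must ensure that the chosen affine section $\sigma$ does not lie entirely in the denominator locus of some $\Phi_j$, for otherwise the composition $\Phi_j\circ\sigma$ is not well defined as a rational function on $\C^d$. Since that locus is a proper Zariski closed subset of $\C^n$, a generic affine section avoids it; if the naive barycentric choice fails, one may perturb within the hyperplane $\{u:\sum_j u_j=1\}$ by a rational vector in $\ker(\tau_{\A})$, preserving the defining property $\tau_{\A}(\sigma(p))=p$ while moving off the problematic locus.
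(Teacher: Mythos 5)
This statement carries no proof in the paper -- it is quoted directly from \cite[Proposition 5.1]{Garcia2010} -- so there is nothing internal to compare against; what you have written reconstructs the standard argument from that source. The skeleton is right: both properties are, via Birch's theorem, re-expressions of the existence of a rational and suitably positive inverse to $\tau_{\A}$ on $\M_{\A,w}$; the forward direction precomposes the $\hat{\beta}_j$ with $\tau_{\A}\circ(\cdot/|\cdot|)$, and the converse composes $\Phi$ with an affine section $\sigma$. The section-independence works for the reason you give, once you make explicit that $\Phi$, being constant on the \emph{positive part} of each affine fibre of $\tau_{\A}\circ(\cdot/|\cdot|)$, is constant on the entire fibre wherever defined, because the positive part is Zariski dense in the fibre.

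Two verifications are incomplete. In the forward direction, Definition~\ref{def:patches2}(3)(c) gives only $\hat{\beta}_j(p)\geq 0$, while Birch's theorem requires $\Phi(u)\in\M_{\A,w}=Y_{\A,w}\cap\R^n_{>0}$, i.e.\ strict positivity; you need the orbit--face correspondence of the moment map (a nonnegative point of $X_{\A,w}$ whose $\tau_{\A}$-image lies in $P^{\circ}$ lies in the dense torus orbit, hence has all coordinates positive) before uniqueness of the MLE can be invoked. More seriously, in the converse your construction produces functions $\hat{\beta}_j=\Phi_j\circ\sigma$ that are defined only on a dense open subset of $P$: the genericity/perturbation argument you describe ensures $\sigma$ is not wholly contained in the pole locus of any $\Phi_j$, but condition (3)(c) requires $\hat{\beta}_j(p)$ to be defined and nonnegative at \emph{every} $p\in P$, boundary included, and boundedness of the values on $P^{\circ}$ does not force a rational function to extend across its indeterminacy locus (compare $xy/(x^2+y^2)$). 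Closing this gap needs more structure on $\Phi$ than mere rationality -- for instance, writing $\Phi=\varphi_{(H,\lambda)}$ as a Horn map via Theorem~\ref{thm:horn}, so that $\hat{\beta}_j(p)=\lambda_j(H\sigma(p))^{h_j}$ becomes an explicit product of powers of affine-linear forms in $p$ (here $\sigma$ is affine), whose definedness and sign on all of $P$ can then be checked directly. As written, the converse establishes the four conditions of Definition~\ref{def:patches2}(3) only on a dense subset of $P$.
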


\begin{theorem}\cite{Clarke2020}\label{thm:strictlinhorn}
Set $a_P := \sum_{i=1}^r a_i$ and $ \; n_{P}:= \sum_{i=1}^rn_i$. The following
are equivalent:
\begin{enumerate}
    \item The pair $(P,w)$ has strict linear precision.
    \item $n_P =0$ and $\beta_{w}(p)=\sum_{j=1}^{n}w_j \beta_{j}(p)
    =\sum_{j=1}^n w_j \prod_{i=1}^r h_{i}(p)^{h_{i}(m_j)}$
    is a nonzero constant $c$.
    \item $\M_{\A,w}$ has rational MLE with minimal Horn pair
    $(H,\lambda)$ given by
    \begin{equation*}
        H=\begin{pmatrix}
        h_1(m_1) & h_1(m_2) & \ldots & h_1(m_n) \\
        h_2(m_1) & h_2(m_2) & \ldots & h_2(m_n)   \\
        \vdots & \vdots &  & \vdots \\
        h_r(m_1) & h_r(m_2) & \ldots & h_{r}(m_n) \\
        -a_P & -a_P & \ldots &-a_P
        \end{pmatrix},\;\;\;\;
        \lambda_j = \frac{w_j}{c}(-a_P)^{a_{P}}
    \end{equation*}
\end{enumerate}
\end{theorem}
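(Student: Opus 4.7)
I would prove $(1)\Rightarrow(2)$, then $(2)\Leftrightarrow(3)$, then $(3)\Rightarrow(1)$. The central object throughout is the generating polynomial $f(y_1,\ldots,y_r):=\sum_{j=1}^n w_j\, y^{h(m_j)}\in\R[y_1,\ldots,y_r]$, which satisfies $\beta_w(p)=f(h(p))$; I also use $\bar u:=\tau_{\A}(u/u_{++})\in P^\circ$ for $u\in\R^n_{>0}$, and the affine identity $h_i\cdot u=u_{++}\,h_i(\bar u)$ obtained from $h_i$ being affine.

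\textbf{$(1)\Rightarrow(2)$, the core step.} Taking the inner product of $p\,\beta_w(p)=\sum_j w_j\beta_j(p)m_j$ with each $n_i$ and adding $a_i\beta_w$ to both sides produces the scalar identity
\[ h_i(p)\,\beta_w(p)\;=\;\sum_j w_j\, h_i(m_j)\,\beta_j(p)\;=\;h_i(p)\,(\partial_{y_i}f)(h(p)), \]
where the second equality uses the Euler-type relation $y_i\partial_{y_i}f=\sum_j w_j h_i(m_j)\,y^{h(m_j)}$. Cancelling the nonzero polynomial $h_i(p)$ gives $(\partial_{y_i}f)(h(p))=\beta_w(p)$ for every $i$, and the chain rule applied to $\beta_w=f\circ h$ in any direction $q\in\R^d$ yields
\[ \partial_q \beta_w(p)\;=\;\sum_i(\partial_{y_i}f)(h(p))\,\langle q,n_i\rangle\;=\;\langle q,n_P\rangle\,\beta_w(p), \]
so $\nabla\beta_w=n_P\cdot\beta_w$ as a polynomial identity in $p$. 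The only solutions of this gradient equation are scalar multiples of $\exp(\langle p,n_P\rangle)$; since $\beta_w$ is a polynomial, we must have $n_P=0$ and $\beta_w$ constant.

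\textbf{$(2)\Leftrightarrow(3)$ by a direct Horn computation.} Under (2), the column sums of $H$ equal $\sum_i h_i(m_j)-a_P=\langle m_j,n_P\rangle=0$, so $H$ is a Horn matrix. Using $h_i\cdot u=u_{++}\,h_i(\bar u)$ together with $\sum_i h_i(m_j)=a_P$,
\[ (Hu)^{h_j}\;=\;\Bigl(\prod_i(u_{++}h_i(\bar u))^{h_i(m_j)}\Bigr)(-a_P u_{++})^{-a_P}\;=\;(-a_P)^{-a_P}\,\beta_j(\bar u), \]
so $\varphi_{(H,\lambda)}(u)_j=(w_j/c)\,\beta_j(\bar u)=\beta_{w,j}(\bar u)$; these sum to $\beta_w(\bar u)/c=1$, confirming the Horn-pair condition. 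Minimality of $H$ is a routine check using the primitivity and pairwise non-collinearity of the $n_i$. Since $\overline{w\beta_\A}$ restricted to $P^\circ$ surjects onto $\M_{\A,w}$, the image of $\varphi_{(H,\lambda)}$ on $\R^n_{>0}$ is $\M_{\A,w}$, and Theorem~\ref{thm:horn} gives (3). Conversely, (3) forces $\sum_i h_i(m_j)=a_P$ for all $j$, hence $\langle m_j,n_P\rangle=0$; since $\A$ affinely spans $\R^d$, $n_P=0$, and the Horn-pair sum-to-one condition then yields $\beta_w(\bar u)/c=1$ on $P^\circ$, hence $\beta_w\equiv c$ on $\R^d$ by polynomial identity.

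\textbf{$(3)\Rightarrow(1)$.} By the equivalence just established, (3) implies (2), so $n_P=0$ and $\beta_w\equiv c$. By Theorem~\ref{thm:horn}, $\varphi_{(H,\lambda)}=\Phi$ is the MLE of $\M_{\A,w}$, and \cite[Corollary 7.3.9]{Sullivant2019} gives $\tau_\A(\Phi(u))=\bar u$. The Horn computation above then yields $\Phi(u)_j=\beta_{w,j}(\bar u)$, so $\sum_j \beta_{w,j}(\bar u)\,m_j=\bar u$ for all $\bar u\in P^\circ$; this extends to $P$ by polynomial identity and is exactly strict linear precision. The main obstacle is the Euler-type manipulation in the second paragraph, which converts the vector equation of strict linear precision into the scalar first-order system $\nabla\beta_w=n_P\cdot\beta_w$ whose only polynomial solutions force $n_P=0$ and $\beta_w$ constant; everything else is bookkeeping.
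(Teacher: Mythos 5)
First, note that the paper does not actually prove this theorem: it is imported verbatim from \cite{Clarke2020}, so there is no internal argument to measure yours against. Judged on its own terms, most of your proposal is correct and the key computation is genuinely nice: pairing the strict-linear-precision identity with each $n_i$ and using the Euler relation $y_i\partial_{y_i}f=\sum_j w_j h_i(m_j)y^{h(m_j)}$ to get $(\partial_{y_i}f)(h(p))=\beta_w(p)$ for every $i$, whence $\nabla\beta_w=n_P\,\beta_w$ and the polynomial rigidity forces $n_P=0$ and $\beta_w\equiv c$. The Horn computation $(Hu)^{h_j}=(-a_P)^{-a_P}\beta_j(\bar u)$, the minimality check, and the implications $(3)\Rightarrow(2)$ and $(3)\Rightarrow(1)$ via Birch's theorem are all sound.

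The gap is in $(2)\Rightarrow(3)$, and it is the load-bearing link of your cycle. Condition (3) asserts that $\varphi_{(H,\lambda)}$ \emph{is the MLE}, but Theorem~\ref{thm:horn} as stated in this paper only says: $\M$ has rational MLE iff there is a Horn pair with image $\M$ \emph{and} $\Phi=\varphi_{(H,\lambda)}$. You verify that $(H,\lambda)$ is a Horn pair whose image is $\M_{\A,w}$, but never that $\varphi_{(H,\lambda)}=\Phi$. The natural way to verify it is Birch's theorem, which requires $\tau_{\A}(\varphi_{(H,\lambda)}(u))=\sum_j\beta_{w,j}(\bar u)m_j=\bar u$ — and that identity is precisely condition (1), which in your architecture is only recovered later via $(3)\Rightarrow(1)$; so as written the argument is circular. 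Nor can you repair it by proving $(2)\Rightarrow(1)$ directly from your differential identity: reversing it, $\beta_w\equiv c$ and $n_P=0$ only give $\sum_i\bigl[(\partial_{y_i}f)(h(p))-c\bigr]n_i=0$, which does not force each bracket to vanish once $r>d$. The honest fix is to invoke the stronger converse actually proved in \cite{Duarte2020} (Theorem 1 there): \emph{any} pair satisfying the two Horn-pair conditions parametrises a model whose MLE is the Horn map itself — no prior knowledge of $\Phi$ needed. With that citation made explicit, $(2)\Rightarrow(3)$ closes and the whole cycle $(1)\Rightarrow(2)\Rightarrow(3)\Rightarrow(1)$ is valid; without it, you are assuming a form of the result you are trying to prove.
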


\subsection{Primitive collections and Horn pairs}\label{subsec:primitive}
The notion of primitive collections was first introduced by Batyrev in \cite{Batyrev1991} for a smooth and projective toric variety $X_{\Sigma_P}$ of the polytope $P$. It provides an elegant description of the nef cone for $X_{\Sigma_P}$. This result has been generalised to the simplicial case and the definition of primitive collections for the  non-simplicial case has been introduced in \cite{CoxRenesse2009}.  
\begin{definition}\label{def: primitive collection}
Let $\Sigma_P$ be a normal fan. For $\sigma\in \Sigma_P$, $\sigma(1)$ denotes the $1$-faces of $\sigma$. A subset $C \subseteq \Sigma{_P}(1)$ of $1$-faces of $\Sigma_P$ is called a \emph{primitive collection} if 
\begin{enumerate}
    \item $C \nsubseteq \sigma(1)$ for all $\sigma \in \Sigma_P$.
    \item For every proper subset $C' \subsetneq C$, there exists $\sigma \in \Sigma_P$ such that $C' \subseteq \sigma(1)$.
\end{enumerate}
In particular, if $\Sigma_P$ is simplicial, $C$ is a primitive collection if $C$ does not generate a cone of $\Sigma_P$ but every proper subset does.
\end{definition}
For strict linear precision, Theorem~\ref{thm:strictlinhorn} gives the minimal Horn pair based only on the lattice distance functions of the facets of the polytope.
The authors in \cite{Clarke2020} raise the question whether it is possible to obtain a similar description of minimal Horn pairs of polytopes with rational linear precision.

\begin{question}
\label{q1}
Is the positive part of the minimal Horn matrix of a pair $(P,w)$ with rational linear precision always equal to the lattice distance matrix of $\A$?
\end{question}
For pairs $(P,w)$ in 2D with rational linear precision,
and the family of prismatoids in Section~\ref{3Dmodels},
the answer to Question~\ref{q1} is affirmative, see Theorem~\ref{thm: 2D Horn matrix}, Proposition~\ref{thm:bigfamily}, and Appendix \ref{ap:table}.\\

In \cite{Clarke2020} there are two examples, one of a trapezoid \cite[Section 8.1]{Clarke2020} and one of a decomposable graphical model \cite[Section 8.3]{Clarke2020}, where the positive part of the Horn matrix is the lattice distance matrix of $\A$ and the negative rows are obtained via the primitive collections of the normal fan of $P$. These examples motivate the next definition
and Question~\ref{q2}:

\begin{definition}\label{def: matrix with ld and primitive}
To a pair $(P,w)$ we associate the matrix $M_{\A,\Sigma_P}$ which consists of the lattice distance matrix of $\A$, with $ij$-th
entry $h_{i}(m_j)$, together with 
negative rows given by summing the rows of the lattice distance functions $-h_i$, for which the facet normals $n_i$ belong to the same primitive collection of $\Sigma_P$. 
\end{definition}
\begin{question} \label{q2}
For a pair $(P,w)$ with rational linear precision is there a Horn pair $(H,\lambda)$ for which $H=M_{\A,\Sigma_P}$?
\end{question}
\noindent  For pairs $(P,w)$ in 2D with rational linear precision, the answer to Question~\ref{q2} is affirmative, see Theorem~\ref{thm: 2D Horn matrix}. 
For the family of prismatoids in Section~\ref{3Dmodels},
Question~\ref{q2} is affirmative only for certain subclasses, see Theorem~\ref{thm: 3D primitivecollection compatible}.
For an arbitrary pair $(P,w)$ with rational linear precision, the matrix $M_{\A,\Sigma_P}$ is not necessarily a Horn matrix, see  Section~\ref{subsec:nonsimple}. Even in the case that $M_{\A,\Sigma_P}$ is a Horn matrix, it does not necessarily give rise to a Horn pair for $(P,w)$, see Section~\ref{section:fewerfacets}. In Section~\ref{sec:2Dand3Dpolytopes} we see a number of special cases for which the answer to Question~\ref{q2} is affirmative. In Section~\ref{sec:toricmultinomialtrees}, we give a condition on $(P,w)$ which guarantees  the existence of a Horn pair $(H,\lambda)$ with $H=M_{\A,\Sigma_P}$. We also provide an explanation for the negative rows of the Horn matrix in the language of multinomial staged tree models - introduced in Section~\ref{multinomialstagedtreemodels}.   
\section{Examples of Horn pairs in 2D and 3D}\label{sec:2Dand3Dpolytopes}
In this section we present families of 2D and 3D pairs $(P,w)$ with rational linear precision and explore the connection
between the geometry of the polytope and the shape of its corresponding Horn pair. Throughout this section we use $(s,t)$, respectively $(s,t,v)$ to denote $\mathbf{t}$ in the 2D, respectively 3D case.

\subsection{Toric surface patches and Horn pairs in 2D}\label{toricsurfacepatches}
By \cite{Bothmer2010}, the only 2D toric patches with rational linear precision are the Bézier triangles, tensor product patches and trapezoidal patches, seen in Figure~\ref{fig:2dpolygons}. 
This family of polygons, that we denote by $\mathcal{F}$, consists of all the Newton polytopes of
the polynomials
\begin{align*}
f_{a,b,d}(\mathbf{t}):=(1+s)^a((1+s)^d+t)^b \,\text{ for }\, a,b,d\geq 0.
\end{align*}
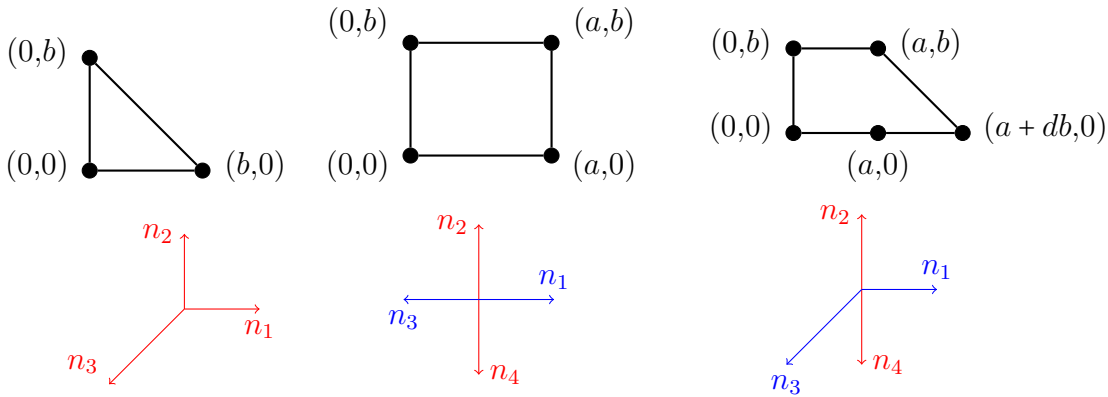
\begin{figure}[H]
\centering
\begin{subfigure}[b]{0.25\linewidth}
\begin{tikzpicture}
    \begin{scope}[scale=1.5]
  \draw (0,0) node[draw,circle,inner sep=2pt,fill, label={[xshift=-0.7cm,yshift=-0.4cm](0,0)}](2){};
  \draw (0,1) node[draw,circle,inner sep=2pt,fill, label={[xshift=-0.7cm,yshift=-0.4cm](0,$b$)}](1){};
  \draw (1,0) node[draw,circle,inner sep=2pt,fill, label ={[xshift=0.7cm,yshift=-0.4cm]($b$,0)}](3){};
  \draw[-] (1) edge[thick] (2);
  \draw[-] (2) edge[thick] (3);
  \draw[-] (1) edge[thick] (3);
  \end{scope}
 \end{tikzpicture}
\end{subfigure}
\begin{subfigure}[b]{0.3\linewidth}
\begin{tikzpicture}
\begin{scope}[scale=1.5]
    \draw (0,0.5) node[draw,circle,inner sep=2pt,fill, label={[xshift=-0.7cm,yshift=-0.6cm](0,0)}](4){};
  \draw (0,1.5) node[draw,circle,inner sep=2pt,fill, label={[xshift=-0.7cm,yshift=-0.2cm](0,$b$)}](1){};
  \draw (1.25,0.5) node[draw,circle,inner sep=2pt,fill, label ={[xshift=0.7cm, yshift=-0.6cm]($a$,0)}](3){};
  \draw (1.25,1.5) node[draw,circle,inner sep=2pt,fill, label ={[xshift=0.7cm,yshift=-0.2cm]($a$,$b$)}](2){};
  \draw[-] (1) edge[thick] (4);
  \draw[-] (2) edge[thick] (3);
  \draw[-] (2) edge[thick] (1);
  \draw[-] (3) edge[thick] (4);
  \end{scope}
 \end{tikzpicture}
\end{subfigure}
\begin{subfigure}[b]{0.4\linewidth}
\begin{tikzpicture}
\begin{scope}[scale=1.5]
\draw (1.5,0.75) node[draw,circle,inner sep=2pt,fill, label={[xshift=1.1cm,yshift=-0.4cm]($a+db$,0)}](5){};
    \draw (0,0.75) node[draw,circle,inner sep=2pt,fill, label={[xshift=-0.7cm,yshift=-0.4cm](0,0)}](4){};
  \draw (0,1.5) node[draw,circle,inner sep=2pt,fill, label={[xshift=-0.7cm,yshift=-0.4cm](0,$b$)}](1){};
  \draw (0.75,0.75) node[draw,circle,inner sep=2pt,fill, label ={[yshift=-0.9cm]($a$,0)}](3){};
  \draw (0.75,1.5) node[draw,circle,inner sep=2pt,fill, label ={[xshift=0.7cm,yshift=-0.4cm]($a$,$b$)}](2){};
  \draw[-] (1) edge[thick] (4);
  \draw[-] (4) edge[thick] (3);
  \draw[-] (2) edge[thick] (1);
  \draw[-] (3) edge[thick] (5);
  \draw[-] (2) edge[thick] (5);
  \end{scope}
 \end{tikzpicture} 
\end{subfigure}

\begin{subfigure}[p]{0.25\linewidth}
 \begin{tikzpicture}
  \draw[red,->](0cm,0cm)--(0cm,1cm)node[pos=1, left]{$n_2$};
  \draw[red,->](0cm,0cm)--(-1cm,-1cm)node[pos=1, above left]{$n_3$};
  \draw[red,->](0cm,0cm)--(1cm,0cm)node[pos=1, below]{$n_1$};
 \end{tikzpicture}
\end{subfigure}
\bigskip
\begin{subfigure}[p]{0.3\linewidth}
 \begin{tikzpicture}
  \draw[red,->](0cm,0cm) --(0cm,1cm)node[pos=1, left]{$n_2$};
  \draw[red,->](0cm,0cm) --(0cm,-1cm)node[pos=1, right]{$n_4$};
  \draw[blue,->](0cm,0cm) --(1cm,0cm)node[pos=1, above]{$n_1$};
    \draw[blue,->](0cm,0cm) --(-1cm,0cm)node[pos=1, below]{$n_3$};
\end{tikzpicture}
\end{subfigure}
\begin{subfigure}[p]{0.3\linewidth}
 \begin{tikzpicture}
  \draw[red,->](0cm,0cm) --(0cm,1cm)node[pos=1, left]{$n_2$};
  \draw[red,->](0cm,0cm) --(0cm,-1cm)node[pos=1, right]{$n_4$};
  \draw[blue,->](0cm,0cm) --(1cm,0cm)node[pos=1, above]{$n_1$};
    \draw[blue,->](0cm,0cm) --(-1cm,-1cm)node[pos=1, below]{$n_3$};
 \end{tikzpicture}
\end{subfigure}
\caption{
Left: B\'ezier triangles. Middle: Tensor product patches. Right: Trapezoids. The normal fan of each polygon
is displayed in the bottom row; two rays with the same colour are in the same primitive collection.}
 \label{fig:2dpolygons}
\end{figure}
 For general $a,b,d$, the Newton polytope associated to $f_{a,b,d}$, which we will denote by $T_{a,b,d}$, will be a trapezoidal patch, in the special cases $T_{0,b,1}=b\Delta_2$ and $T_{a,b,0}=a\Delta_1\times b\Delta_1$, we will have the more familiar Bézier triangles and tensor product patches. 
The lattice points in $T_{a,b,d}\cap \Z^{2}$ are
$
\A= \{(i,j): 0\leq j \leq b, 0\leq i\leq a+d(b-j)\}
$.  By Theorems~\ref{thm:horn} and \ref{thm:rational} we know that the statistical model associated to a pair in $\mathcal{F}$ admits a Horn pair. 
\begin{proposition}\label{prop:trapezoid2D}
A Horn pair $(H,\lambda)$ of a polygon in the family $\mathcal{F}$  is given by:
\begin{align*}
H&=\begin{pmatrix}
h_1(m_1) & \ldots & h_1(m) & \ldots & h_1(m_n)\\
h_2(m_1) & \ldots & h_2(m)  & \ldots & h_2(m_n)\\
h_3(m_1) & \ldots & h_3(m)  & \ldots & h_3(m_{n})\\
h_4(m_1) & \ldots & h_4(m)  & \ldots & h_4(m_{n})\\
-(h_1+h_3)(m_1) & \ldots & -(h_1+h_3)(m)  & \ldots & -(h_1+h_3)(m_{n})\\
-(h_2+h_4)(m_1) & \ldots & -(h_2+h_4)(m) & \ldots & -(h_2+h_4)(m_n)\\
\end{pmatrix},
\end{align*}
\begin{align*}
  \lambda_{m}&= (-1)^{a+d(b-j)+b}\binom{(h_2+h_4)(m)}{j}\binom{(h_1+h_3)(m)}{i}
 \end{align*}
where $m:=(i,j)\in\mathscr{A}$ is a general lattice point,  $m_1,\ldots, m_n$ is an ordered list of elements in $\A$,
$\mathbf{t}:=(s,t)$, and $h_{1},\ldots,h_{4}$ are
\begin{align*}
h_1(\mathbf{t})=s, \;\;h_2(\mathbf{t})=t, \;\;h_3(\mathbf{t})=a+db-s-dt, \;\;h_4(\mathbf{t})=b-t.  
\end{align*}
\end{proposition}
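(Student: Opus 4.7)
My plan is to exhibit explicit rational blending functions for $(T_{a,b,d},w)$ with strict linear precision, and then re-express them as the Horn parametrisation $\varphi_{(H,\lambda)}$ claimed in the statement. Guided by the nested binomial form of $f_{a,b,d}(s,t)=(1+s)^a((1+s)^d+t)^b$ and by Example~\ref{ex:trapex}, I would propose the candidate
\[\hat{\beta}_{(i,j)}(s,t):=\binom{b}{j}\binom{a+d(b-j)}{i}\,\frac{t^j(b-t)^{b-j}}{b^b}\cdot\frac{s^i(a+db-dt-s)^{a+d(b-j)-i}}{(a+db-dt)^{a+d(b-j)}},\]
where the outer factor is the $j$th Bernstein polynomial of $b\Delta_1$ at $t$ and the inner factor is the $i$th Bernstein polynomial of $(a+d(b-j))\Delta_1$ at $s$, but with the discrete normalising constant $(a+d(b-j))^{a+d(b-j)}$ replaced by the continuous $(a+db-dt)^{a+d(b-j)}=(h_1+h_3)^{a+d(b-j)}$.

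Next I would verify the four conditions of Definition~\ref{def:patches2}(3) for $\hat{\beta}$. Condition (a), $\sum_{(i,j)}\hat{\beta}_{(i,j)}=1$, follows by applying the binomial theorem twice: $\sum_i\binom{a+d(b-j)}{i}s^i(a+db-dt-s)^{a+d(b-j)-i}=(a+db-dt)^{a+d(b-j)}$ cancels the inner denominator, and then $\sum_j\binom{b}{j}t^j(b-t)^{b-j}=b^b$ cancels the outer one. Condition (c) holds because $h_1,h_2,h_3,h_4$ and $h_1+h_3$ are all non-negative on $P$. For condition (d), I would apply the differentiated binomial identity $\sum_i i\binom{n}{i}x^iy^{n-i}=nx(x+y)^{n-1}$ to the inner sum, reducing $\sum_{i,j}i\,\hat{\beta}_{(i,j)}$ to a sum over $j$ that collapses to $s$; an analogous computation yields $\sum_{i,j}j\,\hat{\beta}_{(i,j)}=t$. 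Condition (b), that $\hat{\beta}$ parametrises $X_{\A,w}\subset\pp^{n-1}$, is immediate once the common rational factor is cleared, since the resulting projective coordinates are the weighted monomials of the toric map $[w\chi]_{\A}$ after a birational reparameterisation.

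Finally I would identify $\hat{\beta}$ with the Horn parametrisation. A row-by-row computation gives $(Hu)_k=|u|\cdot h_k(\tau_{\A}(\overline{u}))$ for $k=1,\dots,4$, while $(Hu)_5=-|u|(a+db-dt)$ and $(Hu)_6=-b|u|$ (using $h_2+h_4=b$ identically), where $(s,t)=\tau_{\A}(\overline{u})$. Because each column of $H$ sums to zero, all powers of $|u|$ cancel in $\varphi_{(H,\lambda)}(u)$, and after absorbing the signs $(-1)^{-(a+d(b-j))}$ and $(-1)^{-b}$ from the two negative rows together with the normalising constants into $\lambda_{(i,j)}$, one obtains $\varphi_{(H,\lambda)}(u)=\hat{\beta}(\tau_{\A}(\overline{u}))$ precisely when $\lambda_{(i,j)}=(-1)^{a+d(b-j)+b}\binom{b}{j}\binom{a+d(b-j)}{i}$, which agrees with the stated formula via $\binom{(h_2+h_4)(m)}{j}=\binom{b}{j}$ and $\binom{(h_1+h_3)(m)}{i}=\binom{a+d(b-j)}{i}$. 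The Horn pair axioms then hold: the coordinate sum is $1$ by linear precision, and positivity on $\R^n_{>0}$ follows because $\tau_{\A}$ maps the positive orthant into $P^{\circ}$ where $\hat{\beta}$ is positive, so Theorem~\ref{thm:horn} concludes the proof. The main obstacle I foresee is pinpointing the continuous normalising denominator $(a+db-dt)^{a+d(b-j)}$ in $\hat{\beta}$, which couples the two Bernstein factors into a single rational function of $(s,t)$ and corresponds exactly to the row $-(h_1+h_3)$ of $H$; once that denominator is identified, the remaining sign and binomial bookkeeping for $\lambda$ is routine.
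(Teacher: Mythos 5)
Your proposal is correct in substance and lands on exactly the functions the paper derives in equation~(\ref{eq:reparam}), but it travels the route in the opposite direction. The paper's proof is constructive: it forms the composite $\tau_{\A}\circ\overline{w\chi_{\A}}$ of the monomial parametrisation with the tautological map, inverts that birational map explicitly to obtain $\varphi(\mathbf{t})=\bigl(h_1/h_3,\ h_2(h_1+h_3)^d/(h_3^dh_4)\bigr)$, and then invokes \cite[Proposition 8.4]{Clarke2020}, which simultaneously yields rational linear precision, the blending functions $\overline{w\chi_{\A}}\circ\varphi$, and the Horn parametrisation $(\overline{w\chi_{\A}}\circ\varphi)(p)$ with $p=\tau_{\A}(\overline{u})$. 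You instead guess the blending functions and verify Definition~\ref{def:patches2}(3) directly; your binomial-theorem checks of conditions (a) and (d) are correct and make the linear-precision part pleasingly self-contained, and your final bookkeeping $(Hu)_k=|u|\,h_k(\tau_{\A}(\overline{u}))$ with the powers of $|u|$ cancelling along the zero-sum columns reproduces the paper's last step, including the sign $(-1)^{a+d(b-j)+b}$. What each approach buys: the paper gets conditions (a)--(d) for free from Clarke--Cox at the cost of computing and inverting the moment map; you avoid that citation at the cost of elementary identities. The one step you cannot actually get for free is condition (b): to see that your $\hat{\beta}$ parametrises $X_{\A,w}$ you must exhibit the substitution $X=h_1/h_3$, $Y=h_2(h_1+h_3)^d/(h_3^dh_4)$ for which $\hat{\beta}_{(i,j)}=w_{(i,j)}X^iY^j/f_{a,b,d}(X,Y)$, and check that $(s,t)\mapsto(X,Y)$ is dominant; this is precisely the inverse map $\varphi$ the paper computes, so calling (b) ``immediate'' relocates rather than removes that computation. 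A further small imprecision: nonnegativity of $h_1,\dots,h_4$ and $h_1+h_3$ on $P$ does not by itself give condition (c), since the denominator $(h_1+h_3)^{a+db-dj}$ vanishes at the vertex $(0,b)$ when $a=0$; this only affects the degenerate subfamilies $T_{0,b,1}$ and $T_{a,b,0}$, which the paper treats separately via Theorem~\ref{thm:strictlinhorn}, and it does not affect the Horn-pair conclusion itself, which only requires positivity on the open orthant.
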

\begin{proof}
We use \cite[Proposition 8.4]{Clarke2020}.
The terms of the polynomial $f_{a,b,d}(\mathbf{t})$ 
 specify weights and lattice points in $T_{a,b,d}\cap \Z^2$.i.e.
 \begin{align*}
f_{a,b,d}(\mathbf{t})=\sum_{m\in\mathscr{A}}w_{m}\mathbf{t}^m,\qquad w_{m}=\binom{b}{j}\binom{a+db-dj}{i}.
 \end{align*}
 The monomial parametrisation (\ref{monomialparametrisation}) of $Y_{\A,w}$ is
\begin{align*}
\overline{w\chi_\mathscr{A}}(\mathbf{t})=\frac{1}{f_{a,b,d}(\mathbf{t})}(S_{m_1},\ldots,S_{m_n}),
\end{align*}
where $S_{m}=w_{m}\mathbf{t}^m$. Composing the monomial parametrisation with the tautological map (\ref{tautologicalmap}) gives the following birational map:
\begin{align*}
(\tau_\mathscr{A}\circ \overline{w\chi_\mathscr{A}})(\mathbf{t})&=\left(\dfrac{s((a+db)(1+s)^d + at)}{((1 + s)^d + t)(1 + s)},  \dfrac{tb}{((1+s)^d + t)}\right)   \\
&=\left(\dfrac{s((a+db)(1+s)^d + at)}{f_{1,1,d}(\mathbf{t})},  \dfrac{tb}{f_{0,1,d}(\mathbf{t})}\right)   
\end{align*}
with the following inverse:
\begin{align*}
\varphi(\mathbf{t})&=\left(\dfrac{s}{a+db-s-dt}, \dfrac{(a+db-dt)^d t}{(a+db-s-dt)^d(b-t)}\right) \\
&=\left(\dfrac{h_1(\mathbf{t})}{h_3(\mathbf{t})}, \dfrac{((h_1+h_3)(\mathbf{t}))^d h_2(\mathbf{t})}{(h_3(\mathbf{t}))^d h_4(\mathbf{t})}\right)
\end{align*}
The component of the monomial parametrisation corresponding to a lattice point $m$, composed with  $\varphi(\mathbf{t})$ is given by 
\begin{align} \nonumber
\left((\overline{w\chi_\mathscr{A}}\circ\varphi)(\mathbf{t})\right)_{m}&=\frac{S_{m}(\varphi(\mathbf{t}))}{f_{a,b,d}(\varphi(\mathbf{t}))}\\ \nonumber
&=\binom{b}{j}\binom{a+db-dj}{i}\dfrac{s^it^j(a+db-s-dt)^{a+db-i-dj}(b-t)^{b-j}}{(a+db-dt)^{a+db-dj}b^{b}}\\ 
&=w_{m}(-1)^{a+d(b-j)+b}h(\mathbf{t})^{h(m)}\label{eq:reparam}
\end{align}
where $h(q)=(h_1(q),h_2(q),h_3(q),h_4(q),
-h_5(q),-h_6(q))$ ($q\in\{\textbf{t},m\}$), $h_5=h_1+h_3$ and $h_6=h_2+h_4=b$.
It follows from \cite[Proposition 8.4]{Clarke2020} that the Horn parametrisation is
$(\overline{w\chi_\mathscr{A}}\circ\varphi)(p)$
where
\begin{align*}
p=\sum_{m\in\mathscr{A}}\frac{u_{m}}{u_+}(m),\qquad  u_{+}=\sum_{m\in\mathscr{A}}u_{m}.
\end{align*}
Therefore, the columns of the Horn matrix are  the exponents of 
\[
w_{m}(-1)^{a+d(b-j)+b}h(p)^{h(m)}, \;\;\; m\in \A,
\]
namely $h(m)$. 
It follows that 
\begin{align*}
\lambda_{m}&=(-1)^{a+d(b-j)+b}w_{m}\\
&=(-1)^{a+d(b-j)+b}\binom{(h_2+h_4)(m)}{j}\binom{(h_1+h_3)(m)}{i}
\end{align*}
\end{proof}

\begin{remark}
The  blending functions $\{\hat{\beta}_m: m\in \A \}$ for each pair $(P,w)$ in $\mathcal{F}$  that satisfy Definition~\ref{def:patches2} $(3)$ are given in equation~(\ref{eq:reparam}) in the previous proof. For the case
$a=d=1$ and $b=2$, these are written in Example~\ref{ex:trapex}.
\end{remark}
\begin{remark}
For general $a,b,d$, Proposition~\ref{prop:trapezoid2D} gives the minimal  Horn pair for $T_{a,b,d}$; this is not the case for  $T_{0,b,1}$ and $T_{a,b,0}$. For the last two cases,  the minimal Horn pair is obtained after row reduction operations or from Theorem \ref{thm:strictlinhorn}.
\end{remark}
Using Proposition~\ref{prop:trapezoid2D} and Theorem~\ref{thm:strictlinhorn}
we obtain an affirmative answer to Question~\ref{q1} for pairs $(P,w)$ in 2D.
A closer look at the primitive collections in Figure~\ref{fig:2dpolygons} also reveals an affirmative answer to Question~\ref{q2}.
This is contained in the next theorem.

\begin{theorem}\label{thm: 2D Horn matrix}
Every pair $(P,w)$ in 2D with rational linear precision has a Horn pair $(H,\lambda)$ with $H=M_{\A,\Sigma_P}$.
\end{theorem}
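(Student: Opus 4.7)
The plan is to exploit the classification of 2D polytopes with rational linear precision and handle each subfamily separately. By \cite{Bothmer2010}, any such pair $(P,w)$ lies in the family $\mathcal{F}$, so it suffices to treat general trapezoids $T_{a,b,d}$ with $a,b,d \geq 1$, B\'ezier triangles $T_{0,b,1}$, and tensor product patches $T_{a,b,0}$. In each case I would read off the primitive collections from the normal fan depicted in Figure~\ref{fig:2dpolygons} and compare $M_{\A,\Sigma_P}$ to the Horn matrix already available from either Proposition~\ref{prop:trapezoid2D} or Theorem~\ref{thm:strictlinhorn}.

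For the general trapezoid, the primitive collections are $\{n_1,n_3\}$ and $\{n_2,n_4\}$, so $M_{\A,\Sigma_P}$ consists of the four rows $h_1,h_2,h_3,h_4$ together with $-(h_1+h_3)$ and $-(h_2+h_4)$; this is exactly the matrix produced by Proposition~\ref{prop:trapezoid2D}, and its explicit $\lambda$ completes the Horn pair. For the B\'ezier triangle $b\Delta_2$, which has strict linear precision, all three facet normals form a single primitive collection, so $M_{\A,\Sigma_P}$ has three lattice-distance rows and one negative row $-(h_1+h_2+h_3)$, equal to the constant $-b = -a_P$ on $P$. Theorem~\ref{thm:strictlinhorn} immediately supplies a Horn pair with exactly this matrix.

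The only real subtlety is the tensor product case $a\Delta_1 \times b\Delta_1$, which also has strict linear precision. The primitive collections $\{n_1,n_3\}$ and $\{n_2,n_4\}$ yield two constant negative rows $-(h_1+h_3) = -a$ and $-(h_2+h_4) = -b$, whereas Theorem~\ref{thm:strictlinhorn} provides a minimal Horn matrix with the single constant row $-a_P = -(a+b)$. The two representations are related via the collinear-row reduction of \cite[Lemma~3]{Duarte2020}, since both constant rows are proportional to the all-ones vector. To pass from the minimal pair to one with matrix $M_{\A,\Sigma_P}$, I would split the row $-a_P$ into the rows $-a$ and $-b$ and multiply each $\lambda_j$ by the column-independent constant $(-a_P)^{-a_P}/\bigl((-a)^{-a}(-b)^{-b}\bigr)$. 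Both Horn-pair axioms persist: the sum-to-one condition is preserved because the rescaling factor is constant, and positivity on $\R_{>0}^n$ follows from $(-1)^{-a-b+(a+b)}=1$. Verifying this splitting is the main technical step; once done, the resulting $(M_{\A,\Sigma_P},\lambda')$ is the desired Horn pair.
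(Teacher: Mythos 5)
Your proposal is correct and follows the same overall strategy as the paper: reduce to the three subfamilies of $\mathcal{F}$ via the classification in \cite{Bothmer2010}, read off the primitive collections from Figure~\ref{fig:2dpolygons}, and match $M_{\A,\Sigma_P}$ against an explicitly known Horn pair in each case. The trapezoid and B\'ezier triangle cases are handled exactly as in the paper.

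The one place you diverge is the tensor product case, which you flag as ``the main technical step'': you start from the \emph{minimal} Horn pair of Theorem~\ref{thm:strictlinhorn} (single negative row $-a_P$) and split it into the two constant rows $-a$ and $-b$, rescaling $\lambda$ by $(-a_P)^{-a_P}/\bigl((-a)^{-a}(-b)^{-b}\bigr)$. That computation is correct — the rescaling constant equals $a^a b^b (a+b)^{-(a+b)}>0$, so both Horn-pair axioms survive — but it is unnecessary. Proposition~\ref{prop:trapezoid2D} is stated for \emph{all} of $\mathcal{F}$, including $d=0$, and its (non-minimal, in this case) Horn matrix already has negative rows $-(h_1+h_3)\equiv -a$ and $-(h_2+h_4)\equiv -b$, i.e.\ it \emph{is} $M_{\A,\Sigma_P}$ for the tensor product patch; the theorem only asks for \emph{a} Horn pair, not the minimal one. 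So the paper dispatches this case by direct citation, whereas you re-derive the same non-minimal pair by inverting the row reduction of \cite[Lemma 3]{Duarte2020}. Your detour buys nothing here, though it does make explicit the general mechanism for un-merging collinear negative rows, which is the operation implicitly relied on whenever one passes between $M_{\A,\Sigma_P}$ and a minimal Horn matrix elsewhere in the paper.
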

\begin{proof}
The normal fans of the polygons in $\mathcal{F}$ are
depicted in Figure~\ref{fig:2dpolygons}, in each subcase
the shape of the normal fan and its primitive collections are independent of the values of $a,b,d$.
The minimal Horn pair $(H,\lambda)$ for  the 2D simplex, $T_{0,b,1}=b\Delta_2$, given in  Theorem~\ref{thm:strictlinhorn} satisfies $H=M_{\A,\Sigma_P}$.
This follows because $b\Delta_2$ has one primitive collection,  $\{n_1,n_2,n_3\}$ and hence $M_{\A,\Sigma_P}$ has a single negative row.
For the tensor product patch $T_{a,b,0}=a\Delta_1 \times b\Delta_1$ and the general trapezoid $T_{a,b,d}$, the primitive
collections are $\{n_1,n_3\}$ and $\{n_2,n_4\}$. In these cases, the Horn pair $(H,\lambda)$ in Proposition~\ref{prop:trapezoid2D} 
satisfies $H=M_{\A,\Sigma_P}$.
\end{proof}

\subsection{A family of prismatoids with rational linear precision}\label{3Dmodels}
Unlike the 2D case, there is no classification for 3D lattice polytopes with rational linear precision. In this section we consider the family of prismatoids
\begin{align*}
    \mathcal{P}:=\{(P,w)&: P \text{ is the Newton polytope of }f_{\A,w}(\mathbf{t})=(f_{a,b,d}(\mathbf{t})+vf_{a',b',d}(\mathbf{t}))^l,\\
    & w \text{ is the vector of coefficients of } f_{\A,w}(\mathbf{t}),\\ &a,a',b,b',d,l\in\mathbb{Z}_{\geq 0} \text{ with }a'\leq a,\,b'\leq b\}.
\end{align*}
\begin{figure}[t]
\centering
  \begin{tikzpicture}
\begin{scope}[scale=6.5]
\node at (0.15,1.1) {$F_1$};
\node at (0.5,0.9) {$F_3$};
\node at (0.8,1.15) {$F_4$};
\node at (0.5,1.15) {$F_5$};
\node at (0.4,1.3) {$F_6$};
\draw (1.2,0.75) node[draw,circle,inner sep=2pt,fill, label={[xshift=1.4cm,yshift=-0.4cm]($(a+db)l$,0,0)}](5){};
\draw (0,0.75) node[draw,circle,inner sep=2pt,fill, label={[xshift=-0.7cm,yshift=-0.4cm](0,0,0)}](4){};
\draw (0,1.25) node[draw,circle,inner sep=2pt,fill, label={[xshift=-0.7cm,yshift=-0.4cm](0,0,$l$)}](6){};
\draw (0.25,1.35) node[draw,circle,inner sep=2pt,fill, label={[xshift=-0.9cm,yshift=-0.4cm](0,$b'l$,$l$)}](7){};
\draw (0.6,1.35) node[draw,circle,inner sep=2pt,fill, label={[xshift=-0.2cm,yshift=-0.1cm]($a'l$,$b'l$,$l$)}](8){};
\draw (0.4,1.25) node[draw,circle,inner sep=2pt,fill, label={[xshift=0.1cm,yshift=-0.8cm]($a'l$,0,$l$)}](9){};
\draw (0.8,1.25) node[draw,circle,inner sep=2pt,fill, label={[xshift=1.5cm,yshift=-0.5cm]($(a'+db')l$,0,$l$)}](10){};
\draw (0.25,1) node[draw,circle,inner sep=2pt,fill,label={[xshift=-0.8cm,yshift=-0.4cm](0,$bl$,0)}](1){};
\draw (0.65,0.75) node[draw,circle,inner sep=2pt,fill, label ={[yshift=-0.9cm]($al$,0,0)}](3){};
\draw (0.75,1) node[draw,circle,inner sep=2pt,fill, label ={[xshift=0.7cm,yshift=-0.1cm]($al$,$bl$,0)}](2){};
  \draw[-,dotted] (1) edge[thick] (4);
  \draw[-] (4) edge[thick] (3);
  \draw[-,dotted] (2) edge[thick] (1);
  \draw[-] (3) edge[thick] (5);
  \draw[-,dotted] (2) edge[thick] (5);
  \draw[-,dotted] (1) edge[thick] (7);
  \draw[-,dotted] (2) edge[thick] (8);
  \draw[-] (4) edge[thick] (6);
  \draw[-] (5) edge[thick] (10);
   \draw[-] (6) edge[thick] (7);
  \draw[-] (7) edge[thick] (8);
  \draw[-] (6) edge[thick] (9);
  \draw[-] (8) edge[thick] (10);
  \draw[-] (9) edge[thick] (10);
  \path[draw, fill=green!20,opacity=0.25] (0.25,1.35)--(0,1.25)--(0.8,1.25)--(0.6,1.35)--cycle;
  \path[draw, fill=green!20,opacity=0.5] (0,0.75)--(1.2,0.75)--(0.8,1.25)--(0,1.25)--cycle;
\draw (1.2,0.75) node[draw,circle,inner sep=2pt,fill, label={[xshift=1.4cm,yshift=-0.4cm]($(a+db)l$,0,0)}](5){};
\draw (0,0.75) node[draw,circle,inner sep=2pt,fill, label={[xshift=-0.7cm,yshift=-0.4cm](0,0,0)}](4){};
\draw (0,1.25) node[draw,circle,inner sep=2pt,fill, label={[xshift=-0.7cm,yshift=-0.4cm](0,0,$l$)}](6){};
\draw (0.25,1.35) node[draw,circle,inner sep=2pt,fill, label={[xshift=-0.9cm,yshift=-0.4cm](0,$b'l$,$l$)}](7){};
\draw (0.6,1.35) node[draw,circle,inner sep=2pt,fill, label={[xshift=-0.2cm,yshift=-0.1cm]($a'l$,$b'l$,$l$)}](8){};
\draw (0.4,1.25) node[draw,circle,inner sep=2pt,fill, label={[xshift=0.1cm,yshift=-0.8cm]($a'l$,0,$l$)}](9){};
\draw (0.8,1.25) node[draw,circle,inner sep=2pt,fill, label={[xshift=1.5cm,yshift=-0.5cm]($(a'+db')l$,0,$l$)}](10){};
\draw (0.65,0.75) node[draw,circle,inner sep=2pt,fill, label ={[yshift=-0.9cm]($al$,0,0)}](3){};

  \end{scope}
 \end{tikzpicture} 
 \caption{
 The general representative of a prismatoid in $\mathcal{P}$ is the
 convex hull of two trapezoids, $\conv(T_{a,b,d}\times\{0\}, T_{a',b',d}\times\{1\})$, dilated by $l$. For the labelling of facets, we refer to Notation~\ref{notation: labelling of the faces}.}\label{fig:bigfamily}
\end{figure}
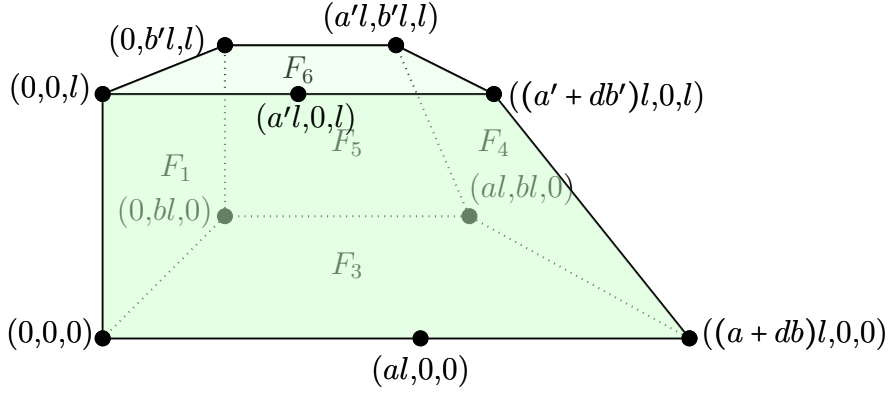
A general element of $\mathcal{P}$ is depicted in
Figure \ref{fig:bigfamily}, prismatoids for  different specialisations of
 $a,a',b,b',d$ are displayed in Table~\ref{table:3Dtrapezoids}. Note that some 3D B\'ezier simploids are also obtained by specialisation. Even though  Remark~\ref{rmk:rationalpower} says  it suffices to show that $P$ has rational linear precision for $l=1$,  we do not use this extra assumption.

\begin{table}
\begin{center}
\begin{tabular}{ |m{3.5cm}|m{4cm}| m{3.4cm}|m{4cm}|  }
 \hline
 \multicolumn{4}{|c|}{\textbf{ (A) Prismatoids with trapezoidal base} $a>0,b>0,d>0,l>0$} \\ \hline
 Trapezoidal \newline frusta  \newline $a'>0$, $b'>0$   & 
  \begin{tikzpicture}[every path/.style={>=latex},every node/.style={draw,circle,fill=black,scale=0.6}]
\begin{scope}[scale=3]
\draw (1.2,0.75) node[draw,circle,inner sep=2pt,fill](5){};
    \draw (0,0.75) node[draw,circle,inner sep=2pt,fill](4){};
    \draw (0,1.25) node[draw,circle,inner sep=2pt,fill](6){};
    \draw (0.25,1.35) node[draw,circle,inner sep=2pt,fill](7){};
     \draw (0.6,1.35) node[draw,circle,inner sep=2pt,fill](8){};
     \draw (0.4,1.25) node[draw,circle,inner sep=2pt,fill](9){};
     \draw (0.8,1.25) node[draw,circle,inner sep=2pt,fill](10){};
  \draw (0.3,1) node[draw,circle,inner sep=2pt,fill](1){};
  \draw (0.6,0.75) node[draw,circle,inner sep=2pt,fill](3){};
  \draw (0.75,1) node[draw,circle,inner sep=2pt,fill](2){};
  \draw[-,dotted] (1) edge[thick] (4);
  \draw[-] (4) edge[thick] (3);
  \draw[-,dotted] (2) edge[thick] (1);
  \draw[-] (3) edge[thick] (5);
  \draw[-,dotted] (2) edge[thick] (5);
  \draw[-,dotted] (1) edge[thick] (7);
  \draw[-,dotted] (2) edge[thick] (8);
  \draw[-] (4) edge[thick] (6);
  \draw[-] (5) edge[thick] (10);
   \draw[-] (6) edge[thick] (7);
  \draw[-] (7) edge[thick] (8);
  \draw[-] (6) edge[thick] (9);
  \draw[-] (8) edge[thick] (10);
  \draw[-] (9) edge[thick] (10);
  \end{scope}
 \end{tikzpicture} &
     Triangle top 
     \newline(simplex if $d=1$) \newline $a'=0$, $b'>0$ &   \begin{tikzpicture}[every path/.style={>=latex},every node/.style={draw,circle,fill=black,scale=0.6}]
\begin{scope}[scale=3]
\draw (1.1,0.75) node[draw,circle,inner sep=2pt,fill](5){};
    \draw (0,0.75) node[draw,circle,inner sep=2pt,fill](4){};
    \draw (0,1.25) node[draw,circle,inner sep=2pt,fill](6){};
    \draw (0.25,1.35) node[draw,circle,inner sep=2pt,fill](7){};
     \draw (0.3,1.25) node[draw,circle,inner sep=2pt,fill](9){};
  \draw (0.33,1) node[draw,circle,inner sep=2pt,fill](1){};
  \draw (0.35,0.75) node[draw,circle,inner sep=2pt,fill](3){};
  \draw (0.8,1) node[draw,circle,inner sep=2pt,fill](2){};
  \draw[-,dotted] (1) edge[thick] (4);
  \draw[-] (4) edge[thick] (3);
  \draw[-,dotted] (2) edge[thick] (1);
  \draw[-] (3) edge[thick] (5);
  \draw[-] (2) edge[thick] (5);
  \draw[-,dotted] (1) edge[thick] (7);
  \draw[-] (2) edge[thick] (7);
  \draw[-] (4) edge[thick] (6);
  \draw[-] (5) edge[thick] (9);
   \draw[-] (6) edge[thick] (7);
  \draw[-] (7) edge[thick] (9);
  \draw[-] (6) edge[thick] (9);
  \end{scope}
 \end{tikzpicture}  \\ \hline
      Trapezoidal \newline wedges  \newline $a'>0$, $b'=0$ &   \begin{tikzpicture}[every path/.style={>=latex},every node/.style={draw,circle,fill=black,scale=0.6}]
\begin{scope}[scale=3]
\draw (1.2,0.75) node[draw,circle,inner sep=2pt,fill](5){};
    \draw (0,0.75) node[draw,circle,inner sep=2pt,fill](4){};
    \draw (0,1.25) node[draw,circle,inner sep=2pt,fill](6){};
  \draw (0.3,1) node[draw,circle,inner sep=2pt,fill](1){};
  \draw (0.6,0.75) node[draw,circle,inner sep=2pt,fill](3){};
  \draw (0.6,1) node[draw,circle,inner sep=2pt,fill](2){};
  \draw (0.5,1.25) node[draw,circle,inner sep=2pt,fill](7){};
  \draw[-,dotted] (1) edge[thick] (4);
  \draw[-] (4) edge[thick] (3);
  \draw[-,dotted] (2) edge[thick] (1);
  \draw[-] (3) edge[thick] (5);
  \draw[-,dotted] (2) edge[thick] (5);
  \draw[-,dotted] (1) edge[thick] (6);
  \draw[-,dotted] (2) edge[thick] (7);
  \draw[-] (6) edge[thick] (7);
  \draw[-] (4) edge[thick] (6);
  \draw[-] (5) edge[thick] (7);
  \end{scope}
 \end{tikzpicture}&
     Trapezoidal \newline 
     Pyramids \newline $a'=0$, $b'=0$ &  \begin{tikzpicture}[every path/.style={>=latex},every node/.style={draw,circle,fill=black,scale=0.6}]
\begin{scope}[scale=3]
\draw (1.2,0.75) node[draw,circle,inner sep=2pt,fill](5){};
    \draw (0,0.75) node[draw,circle,inner sep=2pt,fill](4){};
    \draw (0,1.35) node[draw,circle,inner sep=2pt,fill](6){};
  \draw (0.15,1) node[draw,circle,inner sep=2pt,fill](1){};
  \draw (0.6,0.75) node[draw,circle,inner sep=2pt,fill](3){};
  \draw (0.55,1) node[draw,circle,inner sep=2pt,fill](2){};
  \draw[-,dotted] (1) edge[thick] (4);
  \draw[-] (4) edge[thick] (3);
  \draw[-,dotted] (2) edge[thick] (1);
  \draw[-] (3) edge[thick] (5);
  \draw[-,dotted] (2) edge[thick] (5);
  \draw[-,dotted] (1) edge[thick] (6);
  \draw[-,dotted] (2) edge[thick] (6);
  \draw[-] (3) edge[thick] (6);
  \draw[-] (4) edge[thick] (6);
  \draw[-] (5) edge[thick] (6);
  \end{scope}
 \end{tikzpicture}  \\ \hline
 \multicolumn{4}{|c|}{\textbf{(B) Prismatoids with tensor product base $a>0$, $b>0$, $d=0$, $l>0$} }\\ \hline
   Tensor \newline product \newline frusta \newline $a'>0,\,b'>0$ 
   \newline $\phantom{b'>0}$ & \begin{tikzpicture}[every path/.style={>=latex},every node/.style={draw,circle,fill=black,scale=0.6}]
\begin{scope}[scale=3]
    \draw (0,0.75) node[draw,circle,inner sep=2pt,fill](4){};
    \draw (0,1.25) node[draw,circle,inner sep=2pt,fill](6){};
    \draw (0.35,1.35) node[draw,circle,inner sep=2pt,fill](7){};
     \draw (0.75,1.35) node[draw,circle,inner sep=2pt,fill](8){};
     \draw (0.45,1.25) node[draw,circle,inner sep=2pt,fill](9){};
  \draw (0.3,1) node[draw,circle,inner sep=2pt,fill](1){};
  \draw (0.7,0.75) node[draw,circle,inner sep=2pt,fill](3){};
  \draw (1,1) node[draw,circle,inner sep=2pt,fill](2){};
  \draw[-,dotted] (1) edge[thick] (4);
  \draw[-] (4) edge[thick] (3);
  \draw[-,dotted] (2) edge[thick] (1);
  \draw[-] (2) edge[thick] (3);
  \draw[-,dotted] (1) edge[thick] (7);
  \draw[-] (2) edge[thick] (8);
  \draw[-] (4) edge[thick] (6);
   \draw[-] (6) edge[thick] (7);
  \draw[-] (7) edge[thick] (8);
  \draw[-] (6) edge[thick] (9);
  \draw[-] (8) edge[thick] (9);
  \draw[-] (3) edge[thick] (9);
  \end{scope}
 \end{tikzpicture} &
    Tensor \newline product \newline wedges \newline $a'=0$, $b'>0$    $\phantom{b'>0}$  &
    \multirow{2}{*}{ \begin{tikzpicture}[every path/.style={>=latex},every node/.style={draw,circle,fill=black,scale=0.6}]
\begin{scope}[scale=3]
    \draw (0,0.75) node[draw,circle,inner sep=2pt,fill](4){};
    \draw (0,1.25) node[draw,circle,inner sep=2pt,fill](6){};
    \draw (0.2,1.35) node[draw,circle,inner sep=2pt,fill](7){};
  \draw (0.2,1) node[draw,circle,inner sep=2pt,fill](1){};
  \draw (0.7,0.75) node[draw,circle,inner sep=2pt,fill](3){};
  \draw (1,1) node[draw,circle,inner sep=2pt,fill](2){};
  \draw[-,dotted] (1) edge[thick] (4);
  \draw[-] (4) edge[thick] (3);
  \draw[-,dotted] (2) edge[thick] (1);
  \draw[-] (2) edge[thick] (3);
  \draw[-,dotted] (1) edge[thick] (7);
  \draw[-] (2) edge[thick] (7);
  \draw[-] (4) edge[thick] (6);
   \draw[-] (6) edge[thick] (7);
  \draw[-] (3) edge[thick] (6);
  \end{scope}
 \end{tikzpicture}} \\
 \cline{1-3}
 
   Tensor \newline product \newline pyramids \newline $a'=0,\,b'=0$ \newline $\phantom{b'>0}$  & \begin{tikzpicture}[every path/.style={>=latex},every node/.style={draw,circle,fill=black,scale=0.6}]
\begin{scope}[scale=3]
    \draw (0,0.75) node[draw,circle,inner sep=2pt,fill](4){};
    \draw (0,1.25) node[draw,circle,inner sep=2pt,fill](6){};
  \draw (0.2,1) node[draw,circle,inner sep=2pt,fill](1){};
  \draw (0.8,0.75) node[draw,circle,inner sep=2pt,fill](3){};
  \draw (0.9,1) node[draw,circle,inner sep=2pt,fill](2){};
  \draw[-,dotted] (1) edge[thick] (4);
  \draw[-] (4) edge[thick] (3);
  \draw[-,dotted] (2) edge[thick] (1);
  \draw[-] (2) edge[thick] (3);
  \draw[-] (4) edge[thick] (6);
   \draw[-,dotted] (6) edge[thick] (1);
  \draw[-] (6) edge[thick] (2);
  \draw[-] (6) edge[thick] (3);
  \end{scope}
 \end{tikzpicture} &
 Tensor \newline product \newline wedges
 \newline $a'>0$, $b'=0$ &
 
\\
 \hline
 \multicolumn{4}{|c|}{\textbf{(C) Prismatoids with triangular base $a=0$, $b>0$, $d>0$, $l>0$} }\\ \hline
     Triangular frusta
     \newline (simplices if $d=1$)\newline $a'=0$, $b'>0$ & \begin{tikzpicture}[every path/.style={>=latex},every node/.style={draw,circle,fill=black,scale=0.6}]
\begin{scope}[scale=3]
    \draw (0,0.75) node[draw,circle,inner sep=2pt,fill](4){};
    \draw (0,1.25) node[draw,circle,inner sep=2pt,fill](6){};
    \draw (0.2,1.35) node[draw,circle,inner sep=2pt,fill](7){};
     \draw (0.6,1.25) node[draw,circle,inner sep=2pt,fill](9){};
  \draw (0.3,1) node[draw,circle,inner sep=2pt,fill](1){};
  \draw (0.9,0.75) node[draw,circle,inner sep=2pt,fill](3){};
  \draw[-,dotted] (1) edge[thick] (4);
  \draw[-] (4) edge[thick] (3);
  \draw[-,dotted] (1) edge[thick] (3);
  \draw[-,dotted] (1) edge[thick] (7);
  \draw[-] (4) edge[thick] (6);
   \draw[-] (6) edge[thick] (7);
  \draw[-] (7) edge[thick] (9);
  \draw[-] (6) edge[thick] (9);
  \draw[-] (3) edge[thick] (9);
  \end{scope}
 \end{tikzpicture} &
   Pyramid (simplex-based if $d=1$) \newline $a'=0$, $b'=0$  & \begin{tikzpicture}[every path/.style={>=latex},every node/.style={draw,circle,fill=black,scale=0.6}]
\begin{scope}[scale=3]
    \draw (0,0.75) node[draw,circle,inner sep=2pt,fill](4){};
    \draw (0,1.25) node[draw,circle,inner sep=2pt,fill](6){};
  \draw (0.3,1) node[draw,circle,inner sep=2pt,fill](1){};
  \draw (1.1,0.75) node[draw,circle,inner sep=2pt,fill](3){};
  \draw[-,dotted] (1) edge[thick] (4);
  \draw[-] (4) edge[thick] (3);
  \draw[-,dotted] (1) edge[thick] (3);
  \draw[-,dotted] (1) edge[thick] (6);
  \draw[-] (4) edge[thick] (6);
  \draw[-] (3) edge[thick] (6);
  \end{scope}
 \end{tikzpicture} \\
 \hline
 \multicolumn{4}{|c|}{ \textbf{ (D) 3D B\'ezier simploids} $l>0$ } \\ \hline
  3D Tensor Product  \newline  $a'=a>0$
  \newline $b'=b>0$\newline $d=0$ \newline $la\Delta_{1}\times lb\Delta_{1}\times l\Delta_{1}$ & \begin{tikzpicture}[every path/.style={>=latex},every node/.style={draw,circle,fill=black,scale=0.6}]
\begin{scope}[scale=3]
    \draw (0,0.75) node[draw,circle,inner sep=2pt,fill](4){};
    \draw (0,1.25) node[draw,circle,inner sep=2pt,fill](6){};
    \draw (0.3,1.5) node[draw,circle,inner sep=2pt,fill](7){};
     \draw (1,1.5) node[draw,circle,inner sep=2pt,fill](8){};
     \draw (0.7,1.25) node[draw,circle,inner sep=2pt,fill](9){};
  \draw (0.3,1) node[draw,circle,inner sep=2pt,fill](1){};
  \draw (0.7,0.75) node[draw,circle,inner sep=2pt,fill](3){};
  \draw (1,1) node[draw,circle,inner sep=2pt,fill](2){};
  \draw[-,dotted] (1) edge[thick] (4);
  \draw[-] (4) edge[thick] (3);
  \draw[-,dotted] (2) edge[thick] (1);
  \draw[-] (2) edge[thick] (3);
  \draw[-,dotted] (1) edge[thick] (7);
  \draw[-] (2) edge[thick] (8);
  \draw[-] (4) edge[thick] (6);
   \draw[-] (6) edge[thick] (7);
  \draw[-] (7) edge[thick] (8);
  \draw[-] (6) edge[thick] (9);
  \draw[-] (8) edge[thick] (9);
  \draw[-] (3) edge[thick] (9);
  \end{scope}
 \end{tikzpicture} &
    
      Triangular Prism \newline $a'=a=0$ \newline $b'= b>0$\newline$ d=1$ \newline $lb\Delta_{2}\times l\Delta_{1}$& \begin{tikzpicture}[every path/.style={>=latex},every node/.style={draw,circle,fill=black,scale=0.6}]
\begin{scope}[scale=3]
    \draw (0,0.75) node[draw,circle,inner sep=2pt,fill](4){};
    \draw (0,1.25) node[draw,circle,inner sep=2pt,fill](6){};
    \draw (0.3,1.5) node[draw,circle,inner sep=2pt,fill](7){};
     \draw (0.7,1.25) node[draw,circle,inner sep=2pt,fill](9){};
  \draw (0.3,1) node[draw,circle,inner sep=2pt,fill](1){};
  \draw (0.7,0.75) node[draw,circle,inner sep=2pt,fill](3){};
  \draw[-,dotted] (1) edge[thick] (4);
  \draw[-] (4) edge[thick] (3);
  \draw[-,dotted] (1) edge[thick] (3);
  \draw[-,dotted] (1) edge[thick] (7);
  \draw[-] (4) edge[thick] (6);
   \draw[-] (6) edge[thick] (7);
  \draw[-] (7) edge[thick] (9);
  \draw[-] (6) edge[thick] (9);
  \draw[-] (3) edge[thick] (9);
  \end{scope}
 \end{tikzpicture} \\
 \hline
   3D simplex \newline $a'=a=0$\newline $b'=0,b=1$\newline $ d=1$ \newline $l\Delta_{3}$& \begin{tikzpicture}[every path/.style={>=latex},every node/.style={draw,circle,fill=black,scale=0.6}]
\begin{scope}[scale=3]
    \draw (0,0.75) node[draw,circle,inner sep=2pt,fill](4){};
    \draw (0,1.25) node[draw,circle,inner sep=2pt,fill](6){};
  \draw (0.4,1.1) node[draw,circle,inner sep=2pt,fill](1){};
  \draw (0.75,0.75) node[draw,circle,inner sep=2pt,fill](3){};
  \draw[-,dotted] (1) edge[thick] (4);
  \draw[-] (4) edge[thick] (3);
  \draw[-] (1) edge[thick] (3);
  \draw[-] (1) edge[thick] (6);
  \draw[-] (4) edge[thick] (6);
  \draw[-] (3) edge[thick] (6);
  \end{scope}
 \end{tikzpicture} & \multicolumn{2}{c|}{} \\
 \hline
\end{tabular}
\end{center}
\caption{Representative members of $\mathcal{P}$. The coordinates of the vertices of each polytope in this table are obtained by specializing the parameters $a,a',b,b',d$ in the coordinates the vertices of the prismatoid in Figure~\ref{fig:bigfamily}.}
\label{table:3Dtrapezoids}
\end{table}

\begin{proposition}
\label{thm:bigfamily}
The pairs in $\mathcal{P}$ have rational linear precision with a
Horn pair $(H,\lambda)$:
\begin{align*}
H=\begin{pmatrix}
h_1(m_1) & \ldots & h_1(m) & \ldots & h_1(m_n)\\
h_2(m_1) &\ldots & h_2(m) & \ldots & h_2(m_n)\\
h_3(m_1) & \ldots & h_3(m) & \ldots & h_3(m_n)\\
h_4(m_1) &\ldots & h_4(m) & \ldots & h_4(m_n)\\
h_5(m_1) & \ldots & h_5(m) & \ldots & h_5(m_n)\\
h_6(m_1) & \ldots & h_6(m) & \ldots & h_6(m_n)\\
-(h_1+h_4)(m_1) & \ldots & -(h_1+h_4)(m) & \ldots & -(h_1+h_4)(m_n)\\
-(h_2+h_5)(m_1) & \ldots & -(h_2+h_5)(m) & \ldots & -(h_2+h_5)(m_n)\\
-(h_3+h_6)(m_1) & \ldots & -(h_3+h_6)(m) & \ldots & -(h_3+h_6)(m_n)\\
\end{pmatrix},
\end{align*}
\begin{align*}
\lambda_{m} = (-1)^{(\sum_{\gamma=1}^6h_{\gamma})(m)}\binom{(h_3\!+\!h_6)(m)}{k}\binom{(h_2\!+\!h_5)(m)}{j}\binom{(h_1\!+\!h_4)(m)}{i},
\end{align*}
where $m:=(i,j,k)\in\A$ is a general lattice point,  $m_1,\ldots, m_n$ is an ordered list of elements in $\A$,
$\mathbf{t}:=(s,t,v)$, and $h_{1},\ldots,h_{6}$ are 
\begin{align*}
h_1(\mathbf{t})&=s,   & h_4(\mathbf{t})&=(a+db)l-s-dt-((a+db)-(a'+db'))v,\\
 h_2(\mathbf{t})&=t & h_5(\mathbf{t})&=bl-t-(b-b')v, \\
 h_3(\mathbf{t})&=v, & h_6(\mathbf{t})&=l-v.
\end{align*}

\end{proposition}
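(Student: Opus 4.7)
The plan is to follow the template of the proof of Proposition~\ref{prop:trapezoid2D} and invoke \cite[Proposition 8.4]{Clarke2020} at the end.

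First, I would identify the weights. Expanding $f_{\A,w}(\mathbf{t})$ with the binomial theorem and using $f_{a,b,d}^{\,l-k} f_{a',b',d}^{\,k} = (1+s)^{a(l-k)+a'k}((1+s)^d+t)^{b(l-k)+b'k}$, the coefficient of $s^i t^j v^k$ equals
\[w_m = \binom{l}{k}\binom{bl-(b-b')k}{j}\binom{(a+db)l - ((a+db)-(a'+db'))k - dj}{i}.\]
Using $l = h_3(m)+h_6(m)$, $bl-(b-b')k = h_2(m)+h_5(m)$, and $(a+db)l - ((a+db)-(a'+db'))k - dj = h_1(m)+h_4(m)$, this matches the unsigned part of the claimed $\lambda_m$.

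Next, I would invert the composition $\tau_\A\circ\overline{w\chi_\A}$. Writing $f_{\A,w}=g^l$ for $g = f_{a,b,d}+vf_{a',b',d}$, logarithmic differentiation gives coordinates $p_r = l\mathbf{t}_r g_{\mathbf{t}_r}/g$ for $r=1,2,3$. The third equation yields $v f_{a',b',d}/f_{a,b,d} = p_3/(l-p_3)$ immediately, while the first two become affine combinations $p_r = (l-p_3)A_r(s,t) + p_3 A'_r(s,t)$, where $A_r, A'_r$ are the tautological coordinate functions for the 2D pairs $T_{a,b,d}$ and $T_{a',b',d}$ obtained in Proposition~\ref{prop:trapezoid2D}. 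Inverting this coupled system produces a rational map $\varphi:\C^3\dasharrow \C^3$ whose components are rational in $h_1,\ldots,h_6$, with $h_4$, $h_5$, $h_6$ and the primitive-collection sums $h_1+h_4$, $h_2+h_5$, $h_3+h_6$ appearing naturally in the denominators, matching the primitive collections $\{n_1,n_4\}, \{n_2,n_5\}, \{n_3,n_6\}$ visible in Figure~\ref{fig:bigfamily}.

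Finally, I would compose $\overline{w\chi_\A}$ with $\varphi$ and verify that for each lattice point $m=(i,j,k)\in\A$,
\[(\overline{w\chi_\A}\circ\varphi)(\mathbf{t})_m = \lambda_m\, h(\mathbf{t})^{h(m)},\]
where $h=(h_1,\ldots,h_6,-(h_1+h_4),-(h_2+h_5),-(h_3+h_6))$. The first six rows of exponents come directly from $(i,j,k)$, the three negative rows arise after clearing the denominator $f_{\A,w}(\varphi(\mathbf{t}))$, and the global sign $(-1)^{(\sum_\gamma h_\gamma)(m)}$ is the same parity factor seen in equation~(\ref{eq:reparam}). Applying \cite[Proposition 8.4]{Clarke2020} then identifies $(H,\lambda)$ as a Horn pair for the model associated with $(P,w)$, so by Theorem~\ref{thm:rational} the pair has rational linear precision. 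The hardest step will be producing $\varphi$ explicitly: in 2D the inversion was a single rational reparametrisation, but here the first two tautological equations couple $(s,t,v)$ through $p_3$-weighted convex combinations, so the inversion must handle all three variables simultaneously. Once $\varphi$ is in hand, the exponent identity reduces to a systematic bookkeeping of powers of each $h_i$, in direct parallel with equation~(\ref{eq:reparam}).
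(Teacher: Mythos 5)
Your plan is the same as the paper's proof: compute the weights $w_m$ from the expansion of $f_{\A,w}$, write down $\tau_\A\circ\overline{w\chi_\A}$, invert it by a rational map $\varphi$, and check that $(\overline{w\chi_\A}\circ\varphi)_m=\lambda_m h(\mathbf{t})^{h(m)}$ before invoking \cite[Proposition 8.4]{Clarke2020}; your weight computation and the structural description of the tautological map (the third coordinate giving $vf_{a',b',d}/f_{a,b,d}=p_3/(l-p_3)$ and the first two being $p_3$-weighted combinations of the 2D tautological coordinates) are correct. The only piece you defer is the explicit inverse, which the paper simply exhibits as $\varphi=\bigl(h_1/h_4,\; h_2(h_1+h_4)^d/(h_4^d h_5),\; h_3(h_1+h_4)^{(a+db)-(a'+db')}(h_2+h_5)^{b-b'}/(h_4^{(a+db)-(a'+db')}h_5^{b-b'}h_6)\bigr)$ and verifies directly, so your route would land in the same place.
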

\begin{proof}
The polynomial $f_{\A,w}(\mathbf{t})$ in the definition of $\mathcal{P}$, can be expressed as a sum
\begin{align*}
f_{\A,w}(\mathbf{t})&=\sum_{k=0}^{(h_3+h_6)(m)}\sum_{j=0}^{(h_2+h_5)(m)}\sum_{i=0}^{(h_1+h_4)(m)}S_{m}
\end{align*}
where
\begin{align*}
S_{m}(\mathbf{t})=\binom{(h_3+h_6)(m)}{k}\binom{(h_2+h_5)(m)}{j}\binom{(h_1+h_4)(m)}{i}\mathbf{t}^m.
\end{align*}
We let
\begin{align*}
(\overline{w\chi_\mathscr{A}}(\mathbf{t}))_{m}&=\frac{S_{m}(\mathbf{t})}{f_{\A,w}(\mathbf{t})},
\end{align*}
then, the vector of all $(\overline{w\chi_\mathscr{A}}(\mathbf{t}))_{m}$ gives the monomial parametrisation (\ref{monomialparametrisation}) of $Y_{\A,w}$ with weights
\begin{align*}
w_{m}=\binom{(h_3+h_6)(m)}{k}\binom{(h_2+h_5)(m)}{j}\binom{(h_1+h_4)(m)}{i}.    
\end{align*}
 Composing the monomial parametrisation with the tautological map (\ref{tautologicalmap}) gives the following birational map:
\begin{align*}
 &(\tau_\mathscr{A}\circ\overline{w\chi_\mathscr{A}})(\mathbf{t})=\\
&=\frac{1}{f_{\A,w}(\mathbf{t})}\left(\sum_{k=0}^{(h_3+h_6)(m)}\sum_{j=0}^{(h_2+h_5)(m)}\sum_{i=0}^{(h_1+h_4)(m)}S_{m}\right)(\mathbf{t})\\
&=\Bigg(\frac{ls\left(((a+db)(1+s)^d+at)f_{a,b',d}(\mathbf{t})+v(a'+db')(1+s)^d+a't)f_{a',b',d}(\mathbf{t})\right)}{f_{1,1,d}(\mathbf{t})\left(f_{a,b,d}(\mathbf{t})b+vf_{a',b',d}(\mathbf{t}))\right)},\\
 &\qquad\frac{lt\left(b'vf_{a',b',d}(\mathbf{t})+bf_{a,b,d}(\mathbf{t})\right)}{f_{0,1,d}(\mathbf{t})\left(f_{a,b,d}(\mathbf{t})+vf_{a',b',d}(\mathbf{t}))\right)},
 \frac{lvf_{a',b',d}(\mathbf{t})}{\left(f_{a,b,d}(\mathbf{t})+vf_{a',b',d}(\mathbf{t}))\right)}\Bigg)
\end{align*}
with the following inverse:
\begin{align*}
\varphi(\mathbf{t})=&\Bigg(
\frac{h_1(\mathbf{t})}{h_4(\mathbf{t})},\frac{h_2(\mathbf{t})((h_1+h_4)(\mathbf{t}))^d}{(h_4(\mathbf{t}))^dh_5(\mathbf{t})},\\
&\qquad\frac{h_3(\mathbf{t})((h_1+h_4)(\mathbf{t}))^{(a+db)-(a'+db')}((h_2+h_5)(\mathbf{t}))^{b-b'}}{(h_4(\mathbf{t}))^{(a+db)-(a'+db')}(h_5(\mathbf{t}))^{b-b'}(h_6(\mathbf{t}))} \Bigg)
\end{align*}
Composing $\varphi(\mathbf{t})$ with the monomial parametrisation
gives 
\begin{align*}\frac{S_{m}(\varphi(\mathbf{t}))}{f_{\A,w}(\varphi(\mathbf{t}))} = 
w_{m} (-1)^{(\sum_{\gamma=1}^6h_\gamma)(m)}h(\mathbf{t})^{h(m)}
\end{align*}
where $h(q)=(h_1(q),\dotsm,h_6(q),-h_7(q),-h_8(q),-h_9(q))$, $(q\in\{\mathbf{t},m\})$, the functions $h_1,\ldots,h_6$ are as in the statement of the theorem and $h_7=h_1+h_4$, $h_8=h_2+h_5$, and $ h_9=h_3+h_6.$

According to \cite[Proposition 8.4]{Clarke2020}, the polytope has rational linear precision with weights $w_{m}$ as defined above and the Horn parametrisation of $Y_{\mathscr{A},w}$ is given by:
\begin{align*}
\frac{S_{m}(\varphi(p))}{f_{\A,w}(\varphi(p))}&=w_{m}(-1)^{(\sum_{\gamma=1}^6h_\gamma)(m)}h(p)^{h(m)}
\end{align*}
where
\begin{align*}
p=\sum_{m\in\mathscr{A}}\frac{u_{m}}{u_+}(m),\qquad  u_{+}=\sum_{(m)\in\mathscr{A}}u_{m}.
\end{align*}
Since the Horn parametrisation is, by definition, a product of linear forms whose exponents match their coefficients, we know that
the columns of H are the vectors $h(m)$.
It follows  that $\lambda_{m}=(-1)^{(\sum_{\gamma=1}^6h_\gamma)(m)} w_{m}$.
\end{proof}

 \subsection{Minimal Horn pairs for prismatoids in $\mathcal{P}$}
  We now study Questions \ref{q1} and \ref{q2} for elements
 in $\mathcal{P}$.
 Proposition~\ref{thm:bigfamily} gives a Horn
 pair $(H,\lambda)$ for each $(P,w)\in \mathcal{P}$ in Table~\ref{table:3Dtrapezoids}, however $H$  need not be
 the minimal Horn matrix in each case.
 By \cite[Lemma 9]{Duarte2020},  we can find \emph{the minimal Horn matrix associated to $(P,w)$} using row reduction operations on $H$.

  \begin{notation}\label{notation: labelling of the faces}
We will denote the facets of a general element in $\mathcal{P}$ as follows:
\begin{align*}
F_1&=\text{left facet}, \qquad &F_2&=\text{front facet}, \qquad &F_3&=\text{bottom facet},\\
F_4&=\text{right facet},\qquad &F_5&=\text{back facet}, \qquad &F_6&=\text{upper facet}.
\end{align*}
This labelling is used in Figure~\ref{fig:bigfamily}. The  normal vectors of each facet are:
\begin{align*}
n_1&=(1,0,0),  &n_2&=(0,1,0),  &n_3&=(0,0,1), \\
n_4&=(-1,-d,-((a+db)-(a'+db'))), &n_5&=(0,-1,-(b-b')), &n_6&=(0,0,-1).
\end{align*}\end{notation}

\subsubsection{The non-simple prismatoids}\label{subsec:nonsimple}
The trapezoidal pyramids, tensor product pyramids and prismatoids with triangle on top, depicted  in Table~\ref{table:3Dtrapezoids} (A) and (B), are all examples of non-simple polytopes in $\mathcal{P}$. Their primitive collections are:
\begin{align*}
\text{Prismatoids with triangle on top}&\quad&\{n_1,n_3,n_4\},\,\{n_1,n_2,n_4\},\,\{n_2,n_5\},\,\{n_3,n_6\}\\
\text{Trapezoidal pyramids}&\quad&\{n_1,n_3,n_4\},\,\{n_2,n_3,n_5\}\\
\text{Tensor product pyramids}&\quad&\{n_1,n_3,n_4\},\,\{n_2,n_3,n_5\}.
\end{align*}
There is no $n_6$ for the two pyramids since the facet $F_6$ has collapsed to a point.\\
For a pair $(P,w)$ in the subfamily of non-simple prismatoids in $\mathcal{P}$, the matrix $M_{\A,\Sigma_P}$ cannot be a Horn matrix since the primitive collections are not a partition of the 1-dimensional rays of the normal fan and therefore the columns cannot add to zero.

\begin{example}
It follows from Proposition~\ref{thm:bigfamily} that the minimal Horn matrix associated to the tensor product pyramid in Table~\ref{table:3Dtrapezoids} (B) is:
\begin{align*}
H=\begin{pmatrix}
h_1(m_1) &  \ldots & h_1(m_{n})\\
h_2(m_1) &  \ldots & h_2(m_{n})\\
h_3(m_1) &  \ldots & h_3(m_{n})\\
h_4(m_1) &  \ldots & h_4(m_{n})\\
h_5(m_1) &  \ldots & h_5(m_{n})\\
-(h_1+h_2+h_4+h_5-h_6)(m_1) &  \ldots & -(h_1+h_2+h_4+h_5-h_6)(m_{n})\\
-(h_3+h_6)(m_1) & \ldots & -(h_3+h_6)(m_{n})\\
\end{pmatrix}
\end{align*}
where $m_{1},\ldots m_{n}\in\A$, $\mathbf{t}:=(s,t,v)$ and $h_{1},\ldots,h_{6}$ are defined to be
\begin{align*}
h_1(\mathbf{t})&=s,  &h_2(\mathbf{t})&=t, &h_3(\mathbf{t})&=v, \\
  h_4(\mathbf{t})&=al-s-av & h_5(\mathbf{t})&=bl-t-bv,  
 & h_6(\mathbf{t})&=l-v.
\end{align*}
We were able to add $h_6$ to the negative rows $-(h_1+h_4)$ and $-(h_2+h_5)$ since all three rows are colinear in this case. As a result, the positive part of the minimal Horn matrix coincides with the lattice distance matrix of $\A$.
\end{example}

\subsubsection{The simple prismatoids with fewer facets}\label{section:fewerfacets}
The trapezoidal wedges (A), tensor product wedges (B), triangular frusta (C) and triangular based pyramids (C) from Table~\ref{table:3Dtrapezoids} are simple prismatoids with less than $6$ facets. The primitive collections in each case are:
\begin{align*}
\text{Trapezoidal wedges}&\qquad&\{n_1,n_4\},\,\{n_2,n_3,n_5\}\\
\text{Tensor product wedges $a'= 0$}&\qquad&\{n_1,n_3,n_4\},\,\{n_2,n_5\}\\
\text{Tensor product wedges $b'=0$}&\qquad&\{n_1,n_4\},\,\{n_2,n_3,n_5\}\\
\text{Triangular based pyramid}&\qquad&\{n_1,n_2,n_3,n_4\}\\
\text{Triangular frusta}&\qquad&\{n_1,n_2,n_4\},\,\{n_3,n_6\}\\
\end{align*}
None of the polytopes above, except the triangular frusta, have an upper facet $F_6$ and hence their normal fans and primitive collections do not include $n_6$. Also, the triangular based pyramid and triangular frusta have no back facet $F_5$ and hence their normal fans and primitive collections do not include $n_5$. In each case, the primitive collections give a partition of the rays in the normal fan, hence the matrix $M_{\A,\Sigma_P}$ associated to $(P,w)$ is a Horn matrix for these cases. The question is whether this Horn matrix belongs to a Horn pair for $(P,w)$.
\begin{example}
Proposition~\ref{thm:bigfamily} gives a Horn pair  for the trapezoidal wedge in Table~\ref{table:3Dtrapezoids} (A), which can be reduced to a Horn pair $(H,\lambda)$,
with:
\begin{align*}
H=\begin{pmatrix}
h_1(m_1) &  \ldots & h_1(m_{n})\\
h_2(m_1) &  \ldots & h_2(m_{n})\\
h_3(m_1) &  \ldots & h_3(m_{n})\\
h_4(m_1) &  \ldots & h_4(m_{n})\\
h_5(m_1) &  \ldots & h_5(m_{n})\\
-(h_1+h_4)(m_1) &  \ldots & -(h_1+h_4)(m_{n})\\
-(h_2+h_5-h_6)(m_1) &  \ldots & -(h_2+h_5-h_6)(m_{n})\\
-(h_3+h_6)(m_1) &  \ldots & -(h_3+h_6)(m_{n})\\
\end{pmatrix}
\end{align*}
where $m_{1},\ldots m_{n}\in\A$, $\mathbf{t}:=(s,t,v)$  and $h_{1},\ldots,h_{9}$ are defined to be
\begin{align*}
h_1(\mathbf{t})&=s,&h_4(\mathbf{t})&=(a+db)l-s-dt-(a-a'+db)v,\\
h_2(\mathbf{t})&=t & h_5(\mathbf{t})&=bl-bv-t, \\
h_3(\mathbf{t})&=v  & h_6(\mathbf{t})&=l-v. \end{align*}
Let us compare $H$ with the matrix $M_{\A,\Sigma_P}$
\begin{align*}
M_{\A,\Sigma_P}=\begin{pmatrix}
h_1(m_1) &  \ldots & h_1(m_{n})\\
h_2(m_1) &  \ldots & h_2(m_{n})\\
h_3(m_1) &  \ldots & h_3(m_{n})\\
h_4(m_1) &  \ldots & h_4(m_{n})\\
h_5(m_1) &  \ldots & h_5(m_{n})\\
-(h_1+h_4)(m_1) &  \ldots & -(h_1+h_4)(m_{n})\\
-(h_2+h_3+h_5)(m_1) &  \ldots & -(h_2+h_3+h_5)(m_{n})\\
\end{pmatrix}
\end{align*}
where $m_{1},\ldots m_{n}$  and $h_{1},\ldots,h_{5}$ are  as in $H$.
For $b=1$, we see that 
$
h_2+h_5-h_6=0$,
$h_3+h_6=l$, and
$h_2+h_3+h_5=l$, thus $H=M_{\A,\Sigma_P}$.
For $b>1$, $H$ and $M_{\A,\Sigma_P}$ are minimal Horn matrices, hence, by uniqueness, $M_{\A,\Sigma_P}$ cannot give rise to a Horn pair for $(P,w)$.
\end{example}
\noindent For all other examples of simple prismatoids with fewer facets, we noticed a similar phenomenon. Firstly, if $n_i$ is not in the normal fan, then the positive row $h_i$ is collinear with a negative row. In particular, for all these examples the positive part of the minimal Horn matrix coincides with the lattice distance matrix of $\A$. Below we summarise for which parameters the matrix $M_{\A,\Sigma_P}$ gives rise to a Horn pair for $(P,w)$, this is not true in general for these families of `simple prismatoids with fewer facets'.
\begin{align*}
\text{Trapezoidal wedges}&\qquad& b=1\\
\text{Tensor product wedges $a'= 0$}&\qquad& a=1\\
\text{Tensor product wedges $b'=0$}&\qquad& b=1\\
\text{Triangular based pyramid}&\qquad& b=d=1\\
\text{Triangular frusta}&\qquad& d=1\\
\end{align*}
These seemingly arbitrary constraints have a nice geometrical interpretation. The constraint $b=1$ forces the triangular facet $F_1$ in the trapezoidal wedges and tensor product wedges ($b'=0$) to be a simplex. The constraint $a=1$ forces the triangular facet $F_2$ in the tensor product wedges ($a'=0$) to be a simplex. The constraint $b=d=1$ on the triangular based pyramid, means it is a 3D simplex and the constraint $d=1$ on the triangular frusta forces the two triangular facets $F_3$ and $F_6$ to be simplices. All the prismatoids considered in this section, except the ones just described, are examples of polytopes with simplicial normal fans for which the answer to Question~\ref{q2} is negative.

\subsubsection{The trapezoidal and tensor product frusta}

The primitive collections for the trapezoidal frusta and the tensor product frusta are:
\begin{align*}
\{n_1,n_4\}\qquad\{n_2,n_5\}\qquad\{n_3,n_6\}.
\end{align*}
It follows easily that the Horn matrix given by Proposition~\ref{thm:bigfamily} is $M_{\A,\Sigma_P}$.
This matrix is also the minimal Horn matrix for all trapezoidal frusta and for general tensor product
frusta. However, there are cases of tensor product frusta, where two or more rows of this matrix are collinear and hence the minimal Horn matrix is not exactly $M_{\A,\Sigma_P}$.
For an overview of all minimal Horn matrices for the family $\mathcal{P}$ of prismatoids, see Table~\ref{table:minimal} in Appendix~\ref{ap:table}.
\begin{example}
A Horn matrix associated to the tensor product frusta in Table~\ref{table:3Dtrapezoids} (B)   is
\begin{align*}
M_{\A,\Sigma_P}=\begin{pmatrix}
h_1(m_1) & \ldots & h_1(m_{n})\\
h_2(m_1) & \ldots & h_2(m_{n})\\
h_3(m_1) & \ldots & h_3(m_{n})\\
h_4(m_1) & \ldots & h_4(m_{n})\\
h_5(m_1) & \ldots & h_5(m_{n})\\
h_6(m_1) & \ldots & h_6(m_{n})\\
-(h_1+h_4)(m_1) &  \ldots & -(h_1+h_4)(m_{n})\\
-(h_2+h_5)(m_1) &  \ldots & -(h_2+h_5)(m_{n})\\
-(h_3+h_6)(m_1) &  \ldots & -(h_3+h_6)(m_{n})\\
\end{pmatrix}
\end{align*}
where $m_{1},\ldots m_{n}\in\A$, $\mathbf{t}:=(s,t,v)$  and $h_{1},\ldots,h_{6}$ are defined to be
\begin{align*}
h_1(\mathbf{t})&=s, & h_4(\mathbf{t})&=al-s-(a-a')v\\ h_2(\mathbf{t})&=t, & h_5(\mathbf{t})&=bl-t-(b-b')v,\\
h_3(\mathbf{t})&=v,&  h_6(\mathbf{t})&=l-v.
\end{align*}
If we consider the subfamily of tensor product frusta such that $a=\lambda b,\,a'=\lambda b'$ for $\lambda\geq 1$ or $\lambda=\frac{1}{\mu}$ with $\mu\geq 1$, then the minimal Horn matrix is
\begin{align*}
H=\begin{pmatrix}
h_1(m_1) & \ldots & h_1(m_{n})\\
h_2(m_1) & \ldots & h_2(m_{n})\\
h_3(m_1) & \ldots & h_3(m_{n})\\
h_4(m_1) & \ldots & h_4(m_{n})\\
h_5(m_1) & \ldots & h_5(m_{n})\\
h_6(m_1) & \ldots & h_6(m_{n})\\
-(h_1+h_2+h_4+h_5)(m_1) &  \ldots & -(h_1+h_2+h_4+h_5)(m_{n})\\
-(h_3+h_6)(m_1) &  \ldots & -(h_3+h_6)(m_{n})\\
\end{pmatrix},
\end{align*}
where $m_{1},\ldots m_{n}\in\A$  and $h_{1},\ldots,h_{6}$, are as above.
\end{example}

\begin{theorem}\label{thm: 3D primitivecollection compatible}
For all pairs in $\mathcal{P}$, the positive part of the minimal Horn matrix is the lattice distance matrix of $\A$. For the subfamilies of $\mathcal{P}$ in Table~\ref{table:primitivesubfams}, the matrix $M_{\A,\Sigma_P}$ gives rise to a Horn pair for $(P,w)$.

\begin{table}[H]
\begin{center}
\begin{tabular}{ |c|c|}
\hline
\textbf{Name of subfamily}&\textbf{Constraints on $a'\leq a,b'\leq b,d$} \\
\hline \\ [-1em]
Trapezoidal wedges&$a'>0,\,b'=0,\,b=1,\,d>0$\\
\hline\\ [-1em]
Tensor product wedges ($a'=0$)&$a'=0,\,a=1,\,b'>0,\,d=0$\\
\hline\\ [-1em]
Tensor product wedges ($b'=0$)&$a'>0,\,b'=0,\,b=1,\,d=0$\\
\hline\\ [-1em]
3D simplex&$a'=a=b'=0,\,b=1,\,d=1$\\
\hline\\ [-1em]
Triangular frusta&$a'=a=0,\,b>0,\,d=1$\\
\hline\\ [-1em]
Tensor product frusta&$a'>0,\,b'>0,\,d=0$\\
\hline\\ [-1em]
Trapezoidal frusta& $a'>0,\,b'>0,\,d>0$\\
\hline
\end{tabular} 
\caption{
Subfamilies of prismatoids for which there exists a Horn pair $(H,\lambda)$ with $H=~M_{\A,\Sigma_{P}}$.}\label{table:primitivesubfams}
\end{center}
\end{table}
\end{theorem}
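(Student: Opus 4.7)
The plan is to specialise the Horn pair $(H,\lambda)$ supplied by Proposition~\ref{thm:bigfamily} to each subfamily of $\mathcal{P}$ listed in Table~\ref{table:3Dtrapezoids} and reduce it to minimal form using the procedure of \cite[Lemma 9]{Duarte2020}. The key input is that, regardless of the specialisation of $a,a',b,b',d,l$, the positive rows of $H$ are the six lattice distance functions $h_1,\ldots,h_6$ of Notation~\ref{notation: labelling of the faces}, while the three negative rows are $-(h_1+h_4)$, $-(h_2+h_5)$, $-(h_3+h_6)$. When parameters degenerate, some $h_i$ become identically zero (the facet $F_i$ has collapsed) and some rows become collinear, producing the reductions needed.

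For the first assertion I argue by cases. In each of the subfamilies depicted in Table~\ref{table:3Dtrapezoids} I identify which of the $h_i$ remain as genuine positive lattice distance functions after specialisation; these correspond precisely to the actual facets of $P$. What must be checked is that no positive row $h_i$ is absorbed by a negative row during reduction. When this would happen (as in the tensor product pyramid, where $h_6$ becomes collinear with the combination of $-(h_1+h_4)$ and $-(h_2+h_5)$), the reduction merges the negative rows rather than the positive one, because $h_6$ is not individually collinear with any single negative row. The three ways in which collinearities can arise (facet collapse, parameter degeneration, and triangular faces on top) are analysed directly, and in each case the positive part of the minimal Horn matrix coincides with the lattice distance matrix of $\A$.

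For the second assertion I enumerate, for each of the seven subfamilies of Table~\ref{table:primitivesubfams}, the primitive collections of $\Sigma_P$ (already listed in Sections~\ref{subsec:nonsimple} and~\ref{section:fewerfacets} for the non-frusta and computed directly for the frusta), write down $M_{\A,\Sigma_P}$ from Definition~\ref{def: matrix with ld and primitive}, and verify equality with the reduced Horn matrix. For the trapezoidal and tensor product frusta, the primitive collections $\{n_1,n_4\},\{n_2,n_5\},\{n_3,n_6\}$ partition the rays of $\Sigma_P$ and the three negative rows of $M_{\A,\Sigma_P}$ already coincide with those of $H$. For the wedges, the 3D simplex, and the triangular frusta, the constraints in Table~\ref{table:primitivesubfams} ($b=1$ for trapezoidal wedges, $a=1$ for tensor product wedges with $a'=0$, $d=1$ for triangular frusta, etc.) are exactly those yielding the required linear relations among the $h_i$ (for instance $h_2+h_5-h_6 \equiv 0$ together with $h_3+h_6 \equiv l \equiv h_2+h_3+h_5$ when $b=1$), so that the reduced Horn matrix can be rewritten as $M_{\A,\Sigma_P}$. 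The vector $\lambda$ of the resulting Horn pair $(M_{\A,\Sigma_P},\lambda)$ is recovered from the $\lambda$ of Proposition~\ref{thm:bigfamily} by tracking the row reductions via \cite[Lemma 3]{Duarte2020}.

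The principal obstacle is combinatorial bookkeeping rather than conceptual novelty: exhaustively enumerating the prismatoid subfamilies, carefully identifying each facet degeneration, and verifying the precise linear relations among the $h_i$ that either hold or fail under the specific parameter constraints in each case. The three worked computations already present in the excerpt (the tensor product pyramid, the trapezoidal wedge, and the tensor product frustum) supply the template, and the remaining subfamilies are handled by the same strategy; a complete listing of all minimal Horn matrices resulting from this case analysis is deferred to Table~\ref{table:minimal} in Appendix~\ref{ap:table}.
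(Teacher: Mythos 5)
Your proposal is correct and follows essentially the same route as the paper: specialise the Horn pair of Proposition~\ref{thm:bigfamily} to each subfamily, row-reduce to the minimal Horn matrix, and compare with $M_{\A,\Sigma_P}$ using the uniqueness of minimal Horn matrices, with the exhaustive case list deferred to Table~\ref{table:minimal} in Appendix~\ref{ap:table} --- which is exactly what the paper's (very terse) proof does. One small inaccuracy that does not affect the argument: in the tensor product pyramid ($a'=b'=0$) the row $h_6$ \emph{is} individually collinear with each of $-(h_1+h_4)=-ah_6$ and $-(h_2+h_5)=-bh_6$; the positive part still equals the lattice distance matrix there simply because $F_6$ has collapsed and $h_6$ is no longer a facet distance function, not because of the merging mechanism you describe.
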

\begin{proof}
Let $(P,w)\in \mathcal{P}$, if $M_{\A,\Sigma_P}$ is a Horn matrix, then after row reduction operations, we get a  minimal Horn matrix. Comparing this matrix with the minimal Horn matrix associated to $(P,w)$ in Table~\ref{table:minimal} (Appendix~\ref{ap:table}) and by uniqueness of minimal Horn matrices,   one can verify both statements on the theorem.
\end{proof}

\section{Multinomial staged tree models}\label{multinomialstagedtreemodels}
 
In this section we define multinomial staged tree models,
we prove that every such model has rational MLE and
we give criteria to determine when such models
are toric varieties for binary multinomial staged trees,
see Theorem~\ref{rationalMLEmultinomialtrees} and Theorem~\ref{thm:balancedformultinomialtrees} respectively.  To each toric binary multinomial staged tree one can associate a polytope, by Theorem~\ref{thm:rational} such a polytope has rational linear precision. These results imply our main theorem:
\begin{theorem} \label{thm:main}
Polytopes of toric binary  multinomial staged trees have rational linear precision.
\end{theorem}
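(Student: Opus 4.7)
The plan is to deduce Theorem~\ref{thm:main} by combining three ingredients the paper has already set up: the rational MLE result for multinomial staged tree models (Theorem~\ref{rationalMLEmultinomialtrees}), the toric/balanced criterion in the binary case (Theorem~\ref{thm:balancedformultinomialtrees}), and the Garcia-Puente--Sottile correspondence between rational linear precision and rational MLE (Theorem~\ref{thm:rational}). The proof should therefore be short, amounting to identifying the tree-theoretic data with the data $(\A,w)$ of a log-linear model and then invoking Theorem~\ref{thm:rational}.

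First, I would fix a toric binary multinomial staged tree $\mathcal{T}$ and consider its statistical model $\M_\mathcal{T}$. Being toric in the sense of Section~\ref{sec:toricmultitrees} means, via Theorem~\ref{thm:balancedformultinomialtrees}, that the Zariski closure of $\M_\mathcal{T}$ is a projective toric variety; by unwinding that characterisation one reads off a finite lattice point configuration $\A = \{m_1,\dots,m_n\}$ and a vector of positive weights $w$ determined by the products of edge labels along root-to-leaf paths, so that $\M_\mathcal{T}$ coincides with the log-linear model $\M_{\A,w}$ associated to the pair $(P,w)$, where $P := \mathrm{conv}(\A)$ is by definition the polytope of $\mathcal{T}$. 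In particular the monomial parametrisation (\ref{monomialparametrisation}) of $Y_{\A,w}$ agrees with the tree parametrisation.

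Next, Theorem~\ref{rationalMLEmultinomialtrees} applies to every multinomial staged tree model, so $\M_\mathcal{T}$ has rational MLE; via the identification $\M_\mathcal{T} = \M_{\A,w}$ just made, the log-linear model $\M_{\A,w}$ has rational MLE as well. Finally, the ``if'' direction of Theorem~\ref{thm:rational} yields that the pair $(P,w)$ has rational linear precision, which is the statement of Theorem~\ref{thm:main}.

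The only genuine content beyond the three invoked theorems is the identification step in the first paragraph: one must check that when the balanced condition of Theorem~\ref{thm:balancedformultinomialtrees} holds, the tree parametrisation can be rewritten in the monomial form $\mathbf{t}\mapsto [w_1\mathbf{t}^{m_1}:\cdots:w_n\mathbf{t}^{m_n}]$ used to define $X_{\A,w}$, with $\A$ and $w$ extracted from the combinatorics of $\mathcal{T}$. This is where the definition of ``polytope of a toric multinomial staged tree'' must be made precise and where the bookkeeping between leaves, root-to-leaf paths, and lattice points in $\A$ happens; I expect this translation, rather than any novel algebro-geometric argument, to be the main technical step of the proof.
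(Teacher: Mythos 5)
Your proposal is correct and follows exactly the paper's own route: the authors deduce Theorem~\ref{thm:main} by combining Theorem~\ref{rationalMLEmultinomialtrees} (rational MLE), Theorem~\ref{thm:balancedformultinomialtrees} (toric/balanced, which justifies identifying the tree model with a log-linear model $\M_{\A,w}$ for the pair $(P_{\T},w)$ with $w_j=c_j$), and Theorem~\ref{thm:rational}. The identification step you flag as the only genuine content is precisely what the paper relies on as well.
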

\noindent Our motivation to introduce this model class arose from the observation
that the Horn pairs of all 2D and 3D polytopes in Section~\ref{sec:2Dand3Dpolytopes} could be interpreted
as a statistical model defined by an event tree with a specific choice of parametrisation. Multinomial staged tree models improve the understanding of polytopes with
rational linear precision in $2D$ and $3D$. They also offer a generalisation for polytopes with rational linear precision in higher dimensions.

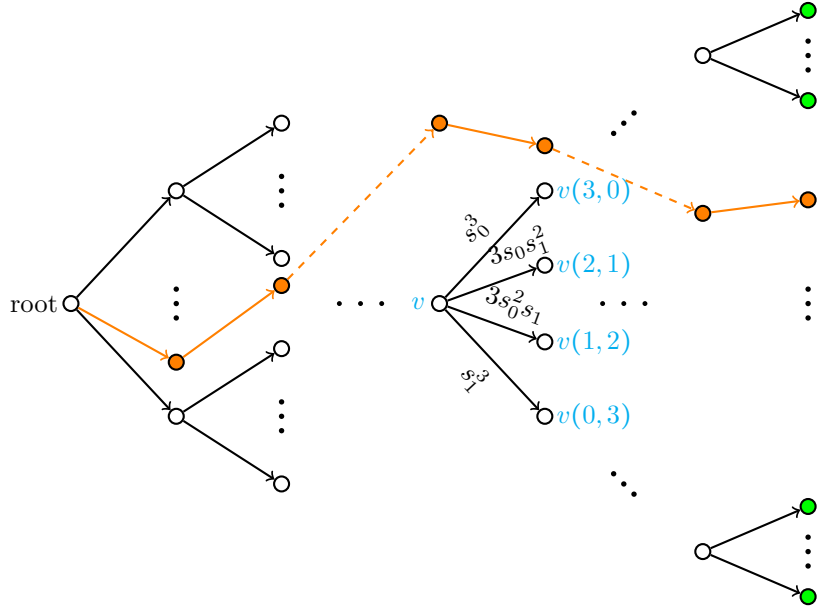
\begin{figure} 
\centering
 \begin{tikzpicture}[thick,scale=0.2]
\renewcommand{\xx}{7}
\renewcommand{\yy}{3}

\node [circle, draw, fill=green!0, inner sep=2pt, minimum width=2pt](r) at (1*\xx,7.5*\yy) {};
\node [circle, draw, fill=green!0, inner sep=2pt, minimum width=2pt](d1) at (2*\xx,10*\yy) {};
\node [circle, draw, fill=green!0, inner sep=2pt, minimum width=2pt](d2) at (2*\xx,5*\yy) {};
\node [circle, draw, fill=green!0, inner sep=2pt, minimum width=2pt](c2) at (3*\xx,8.5*\yy) {};
\node [circle, draw, fill=green!0, inner sep=2pt, minimum width=2pt] (c3) at (3*\xx,6.5*\yy) {};
\node [circle, draw, fill=green!0, inner sep=2pt, minimum width=2pt](cc) at (3*\xx,11.5*\yy) {};
\node [circle, draw, fill=green!0, inner sep=2pt, minimum width=2pt](ccc) at (3*\xx,3.5*\yy) {};
\node [circle, draw, fill=green!0, inner sep=2pt, minimum width=2pt](f1) at (4.5*\xx,7.5*\yy) {};
\node [circle, draw, fill=green!0, inner sep=2pt, minimum width=2pt](g1) at (5.5*\xx,5*\yy) {};
\node [circle, draw, fill=green!0, inner sep=2pt, minimum width=2pt](g2) at (5.5*\xx,10*\yy) {};
\node [circle, draw, fill=green!0, inner sep=2pt, minimum width=2pt](g3) at (5.5*\xx,6.65*\yy) {};
\node [circle, draw, fill=green!0, inner sep=2pt, minimum width=2pt](g4) at (5.5*\xx,8.35*\yy) {};
\node [circle, draw, fill=green!0, inner sep=2pt, minimum width=2pt](ee) at (7*\xx,13*\yy) {};
\node [circle, draw, fill=green!0, inner sep=2pt, minimum width=2pt](eee) at (7*\xx,2*\yy) {};

\node [circle, draw, fill=green, inner sep=2pt, minimum width=2pt](bb1) at (8*\xx,14*\yy) {};
\node [circle, draw, fill=green, inner sep=2pt, minimum width=2pt](bb2) at (8*\xx,12*\yy) {};

\node [circle, draw, fill=green, inner sep=2pt, minimum width=2pt](bbb1) at (8*\xx,3*\yy) {};
\node [circle, draw, fill=green, inner sep=2pt, minimum width=2pt](bbb2) at (8*\xx,1*\yy) {};

\node [scale=1.6](cx1) at (2*\xx,7.5*\yy) {$\vdots$};
\node [scale=1.6](cx2) at (3*\xx,10*\yy) {$\vdots$};
\node [scale=1.6](cx3) at (3*\xx,5*\yy) {$\vdots$};
\node [scale=1.6](cx3) at (3.75*\xx,7.5*\yy) {$\ldots$};
\node [scale=1.6](cx3) at (6.25*\xx,7.5*\yy) {$\ldots$};

\node [scale=1.6](vdots) at (8*\xx,7.5*\yy) {$\vdots$};
\node [scale=1.6](ddots1) at (6.25*\xx,3.5*\yy) {$\ddots$};
\node [scale=1.6](ddots2) at (6.25*\xx,11.5*\yy) {$\udots$};
\node [scale=1.6](vdots1) at (8*\xx,2*\yy) {$\vdots$};
\node [scale=1.6](vdots2) at (8*\xx,13*\yy) {$\vdots$};

\node (fl1) at (4.3*\xx,7.5*\yy) {\footnotesize{\color{cyan}$v$}};
\node (gl1) at (5.97*\xx,5*\yy) {\footnotesize{\color{cyan}$v(0,3)$}};
\node (gl2) at (5.97*\xx,10*\yy) {\footnotesize{\color{cyan}$v(3,0)$}};
\node (gl3) at (5.97*\xx,6.65*\yy) {\footnotesize{\color{cyan}$v(1,2)$}};
\node (gl4) at (5.97*\xx,8.35*\yy){\footnotesize{\color{cyan}$v(2,1)$}};
\node (r2)  at (0.65*\xx,7.5*\yy) {\footnotesize  root};


\draw[->] (r) -- node [above,draw=none, fill=none, scale=5] {} (d1);
\draw[->] (r) -- node [below,draw=none, fill=none, scale=5] {} (d2);
\draw[->] (f1) -- node [midway,sloped,below] {\footnotesize $s_1^{3}$} (g1);
\draw[->] (f1) -- node [midway,sloped,above] {\footnotesize $s_0^{3}$} (g2);
\draw[->] (f1) -- node [midway,sloped,above] {\footnotesize $\phantom{++}3s_0^{2}s_1$} (g3);
\draw[->] (f1) -- node [midway,sloped,above] {\footnotesize $\phantom{++++}3s_0s_1^{2}$} (g4);

\draw[->] (d1) -- (c2);
\draw[->] (d2) -- (c3);

\draw[->] (d1) -- (cc);
\draw[->] (d2) -- (ccc);

\draw[->] (ee) -- (bb1);
\draw[->] (ee) -- (bb2);
\draw[->] (eee) -- (bbb1);
\draw[->] (eee) -- (bbb2);


\node [circle, draw, fill=orange, inner sep=2pt, minimum width=2pt](path1) at (2*\xx,6.2*\yy) {};
\node [circle, draw, fill=orange, inner sep=2pt, minimum width=2pt](path2) at (3*\xx,7.9*\yy) {};
\node [circle, draw, fill=orange, inner sep=2pt, minimum width=2pt](path3) at (4.5*\xx,11.5*\yy) {};
\node [circle, draw, fill=orange, inner sep=2pt, minimum width=2pt](path4) at (5.5*\xx,11*\yy) {};
\node [circle, draw, fill=orange, inner sep=2pt, minimum width=2pt](path5) at (7*\xx,9.5*\yy) {};
\node [circle, draw, fill=orange, inner sep=2pt, minimum width=2pt](path6) at (8*\xx,9.8*\yy) {};

\draw[->,dashed, orange] (path4) --(path5) ;
\draw[->,orange] (path5) -- (path6);
\draw[->,orange] (r) -- (path1);
\draw[->,orange] (path1) -- (path2);
\draw[->,dashed, orange] (path2) --(path3);
\draw[->,orange] (path3) -- node [midway,sloped,above]{}(path4);
\end{tikzpicture} 
\caption{General sketch of a multinomial staged tree. The vertex $v$ is labeled by the floret of degree 3 on $S_l$, denoted by $f_{l,3}$. The green vertices are the leaves and a root-to-leaf path is shown in orange.} \label{fig:generaltree}
\end{figure}

\subsection{Definition of multinomial staged trees} \label{subsec:defmultitrees}
We start by introducing the multinomial model as an 
event tree. This model is the building block of
multinomial staged tree models. Throughout this section $m$ denotes a positive integer and $[m]:=\{1,2,\ldots,m\}$, this differs from Section~\ref{sec:2Dand3Dpolytopes} where $m$ was used for lattice point.
\begin{example}\label{Example:multinomialmodel}
The multinomial model encodes the experiment of rolling a $q$-sided die $n$ independent times and recording the side that came up each time.
The outcome space for this model is the set $\Omega$ of all tuples $K=(k_1,\dotsm,k_q)\in \mathbb{N}^q$ whose entries sum to $n$.
We can depict this model by a rooted tree $\T=(V,E)$ with vertices $V=\{r\}\cup\{r(K):K\in \Omega\}$ and edges $E=\{r\to r(K): K\in \Omega\}$. To keep track of
the probability of each outcome we can further label
$\T$ with monomials on the set of symbols $\{s_1,\ldots, s_q\}$. Each symbol $s_i$ represents the probability 
that the die shows side $i$ when rolled once. The monomial
representing the probability of outcome $K$
is the term with vector of exponents $K$ in the multinomial
expansion of $(s_1+\ldots+s_q)^n$, namely ${n\choose K}\prod_{i=1}^ns_i^{k_i}$, where ${n\choose K}:={n\choose k_1,\dotsm,k_m}$. The labelled tree $b\T_{\Delta_2}$ in Figure~\ref{fig:my_2dtrees}, represents
the multinomial model with $n=b$ and $q=3$.
\end{example}

In general terms a multinomial staged tree, is a labelled and directed event tree such that at each vertex, the subsequent event
is given by a multinomial model as in Example~\ref{Example:multinomialmodel}. To introduce this
concept formally, we start with a rooted and directed tree $\T=(V,E)$ with vertex set $V$ and edge set $E$ such that edges are directed away from the root. The directed edge from $v$ to $w$ is
denoted $v\to w$, the set of children of 
a vertex $v\in V$ is $\ch(v):=\{u\in V:v\rightarrow u\in E\}$ and 
the set of outgoing edges from $v$ is $E(v):=\{v\to u : u\in\ch(v)\}$. If $\ch(v)=\emptyset$ then we say that $v$ is a leaf and we let $\widetilde{V}$ denote the set of non-leaf vertices of $\T$.

\noindent Given a rooted and directed tree $\T$, we now explain how
to label its edges using monomials terms. Figure~\ref{fig:generaltree} shows
a general sketch of a multinomial staged tree.
\begin{definition} \label{ref: multinomial staged tree}
Fix a set
of symbols $S=\{s_i: i\in I\}$ indexed by a set $I$. Let $I_1,\ldots,I_m$ be a partition of $I$ and $S_1,\ldots,S_m$
the induced partition in the set $S$. 
\begin{enumerate}
\item[(1)] The sets $S_1,\ldots,S_m$ are called \emph{stages}.
    \item[(2)] For $a\in \Z_{\geq 1}$
and $\ell\in [m]$, a \emph{ floret of degree} $a$ on $S_{\ell}$
is the set of terms in the multinomial expansion of the expression $(\sum_{i\in I_{\ell}}s_i)^a$, we denote this set by $f_{\ell,a}$.
\item[(3)] A function $\LL:E \to \bigcup_{\ell\in [m], a\in \Z_{\geq 1}}f_{\ell,a}$ is a \emph{labelling} of $\T$ if for every $v\in \widetilde{V}$, $\LL(E(v))=f_{\ell,a}$ for some $\ell\in[m],\;a\in \Z_{\geq 1}$, and the restriction   $\LL_{v}: E(v)\to f_{\ell,a}$  is a bijection.
\item[(4)] A multinomial staged tree
is a pair $(\T,\LL)$, where $\T$ is a rooted directed tree
and $\LL$ is a labelling of $\T$ as in condition (3).
\end{enumerate}
\end{definition}

In a multinomial staged tree $(\T,\LL)$, each $v\in \widetilde{V}$
is associated to the floret $f_{\ell,a}$ that satisfies $\mathrm{im}(\LL_v)=f_{\ell,a}$. In this case we index the children of $v$ by $v(K)$ where 
$K=(k_{i_1},\ldots, k_{i_{|I_{\ell}|}}) \in \mathbb{N}^{|I_{\ell}|}$ is  a tuple of nonnegative integers that add to $a$ and $i_1,\ldots, i_{|I_{\ell}|}$ is a fixed ordering of the elements in $I_{\ell}$. It follows that when $\mathrm{im}(\LL_{v})=f_{\ell,a}$, then $E(v)=\{v\to v(K): K\in \mathbb{N}^{|I_{\ell}|}, |K|=a\}$, where $|K|:= \sum_{q=1}^{|I_{\ell}|}k_{i_{q}}$.  We further assume that the indexing of the children $v$ is compatible with the labelling $\LL$, namely for all multinomial staged trees, $\LL_{v}(v\to v(K))
 = {a\choose K}\prod_{q=1}^{|I_{\ell}|}s_{i_q}^{k_{i_q}}$, where ${a\choose K}={a \choose k_{i_1},\ldots, k_{i_{|I_{\ell}|}}}$.
 It is important to note that this local description of
 the tree at the vertex $v$ is the multinomial model described in Example~\ref{Example:multinomialmodel} up to a change of notation. To clarify
 the notation just introduced we revisit Example~\ref{Example:multinomialmodel}
with a concrete choice of parameters.

\begin{example}
Consider the multinomial model for $q=2$ and $n=3$, the outcome space are all possible outcomes of flipping a coin 3 times. Here $S=S_1=\{s_1,s_2\}$ and the root vertex $v$ will have $4$ children, all of which are leaves. The 4 edges of the tree will be labelled by the elements in the floret $f_{1,3}=\{s_1^3,3s_1^2s_2,3s_1s_2^2,3s_2^3\}$. The sets of children and outgoing edges of $v$ are then
$\ch(v)=\{v(3,0),v(2,1),v(1,2),v(0,3)\}$ and $E(v)=\{v\to v(3,0),v\to v(2,1),v\to v(1,2),v \to v(0,3)\}$. 
\end{example}
\begin{remark}\label{rmk:stratified}
 We will always consider a multinomial staged tree $(\T,\LL)$ as an embedded tree in the plane. This means the tree has a fixed ordering of its edges and vertices. The \emph{level} of a vertex $v$ in $\T$ is the number edges in a path from the root to $v$. All the trees we consider satisfy
 the property that two florets associated to two vertices in different levels must be on different stages. This implies that each root-to-leaf path contains at most one monomial term from each floret. Several figures in Section~\ref{sec:toricmultinomialtrees} contain multinomial staged trees, in these pictures, for simplicity, we
 omit the coefficients of the monomial edge labels.
\end{remark}

 \begin{figure} 
     \centering
     \begin{subfigure}[c]{0.45\linewidth}
     \begin{tikzpicture}[thick,scale=0.2]
	

 	 \node[circle, draw, fill=green!0, inner sep=2pt, minimum width=2pt] (v1) at (18,12) {};
 	 \node[circle, draw, fill=green!0, inner sep=2pt, minimum width=2pt] (v2) at (18,0) {};
	 \node[circle, draw, fill=green!0, inner sep=2pt, minimum width=2pt] (v3) at (18,-12) {};
	 
	 \node (v2dots) at (18,6) {$\vdots$};
	 \node (v3dots) at (18,-6) {$\vdots$};

	 \node[circle, draw, fill=green!00, inner sep=2pt, minimum width=2pt] (r) at (0,0) {};

	 \draw[->]   (r) -- node[midway,sloped,above]{}    (v1) ;
 	 \draw[->]   (r) -- node[midway,sloped,above]{\footnotesize$\phantom{++++}{b\choose i, j, b-i-j}s_0^is_1^js_2^{b-i-j}$}  (v2) ;
	 \draw[->]   (r) -- node[midway,sloped,below]{}  (v3) ;

	 \node at (0,1.5) {\footnotesize$r$} ;
	 \node at (-1,12) {$\T_{b\Delta_2}$} ;

\end{tikzpicture}
    \end{subfigure}
    \begin{subfigure}[c]{0.45\linewidth}
     \begin{tikzpicture}[thick,scale=0.2]

     \node[circle, draw, fill=green!0, inner sep=2pt, minimum width=2pt] (w1) at (24,12) {};
	 \node[circle, draw, fill=green!0, inner sep=2pt, minimum width=2pt] (w3) at (24,6) {};	
     \node[circle, draw, fill=green!0, inner sep=2pt, minimum width=2pt] (w4) at (24,3) {};
	 \node[circle, draw, fill=green!0, inner sep=2pt, minimum width=2pt] (w6) at (24,-3) {};	
	 \node[circle, draw, fill=green!0, inner sep=2pt, minimum width=2pt] (w7) at (24,-6) {};
	 \node[circle, draw, fill=green!0, inner sep=2pt, minimum width=2pt] (w9) at (24,-12) {}; 

 	 \node[circle, draw, fill=green!0, inner sep=2pt, minimum width=2pt] (v1) at (12,9) {};
 	 \node[circle, draw, fill=green!0, inner sep=2pt, minimum width=2pt] (v2) at (12,0) {};
	 \node[circle, draw, fill=green!0, inner sep=2pt, minimum width=2pt] (v3) at (12,-9) {};
	 
	 \node (v2dots) at (12,5) {$\vdots$};
	 \node (v3dots) at (12,-4) {$\vdots$};
	 \node (w1dots) at (24,9) {$\vdots$};
	 \node (w4dots) at (24,0) {$\vdots$};
	 \node (w4dots) at (24,-9) {$\vdots$};

	 \node[circle, draw, fill=green!00, inner sep=2pt, minimum width=2pt] (r) at (0,0) {};

	 \draw[->]   (r) -- node[midway,sloped,above]{\footnotesize$s_0^b$}    (v1) ;
 	 \draw[->]   (r) -- node[midway,sloped,above]{\phantom{++}\footnotesize$s_0^js_1^{b-j}$}  (v2) ;
	 \draw[->]   (r) -- node[midway,sloped,below]{\footnotesize$s_1^b$}  (v3) ;
	 
 	 \draw[->]   (v1) -- node[midway,sloped,above]{\footnotesize$s_2^a$}   (w1) ;
	 \draw[->]   (v1) -- node[midway,sloped,below]{\footnotesize$s_3^a$}   (w3) ;
	 
	 \draw[->]   (v2) -- node[midway,sloped,above]{\footnotesize$s_2^{a+d(b-j)}$}   (w4) ;
	 \draw[->]   (v2) -- node[midway,sloped,below]{\footnotesize$s_3^{a+d(b-j)}$}   (w6) ;
	 
	 \draw[->]   (v3) --  node[midway,sloped,above]{\footnotesize$s_2^{a+db}$}  (w7) ;
	 \draw[->]   (v3) -- node[midway,sloped,below]{\footnotesize$s_3^{a+db}$}   (w9) ;

	 \node at (0,1.5) {\footnotesize$r$} ;
	 \node at (0,12) {$\T_{a,b,d}$} ;
\end{tikzpicture}
    \end{subfigure}
     \caption{The multinomial staged trees $\T_{b\Delta_2}$ and $\T_{a,b,d}$  represent the multinomial model with three outcomes and $b$ trials and the model in Example~\ref{ex:2dimtrees} respectively.}
     \label{fig:my_2dtrees}
 \end{figure}
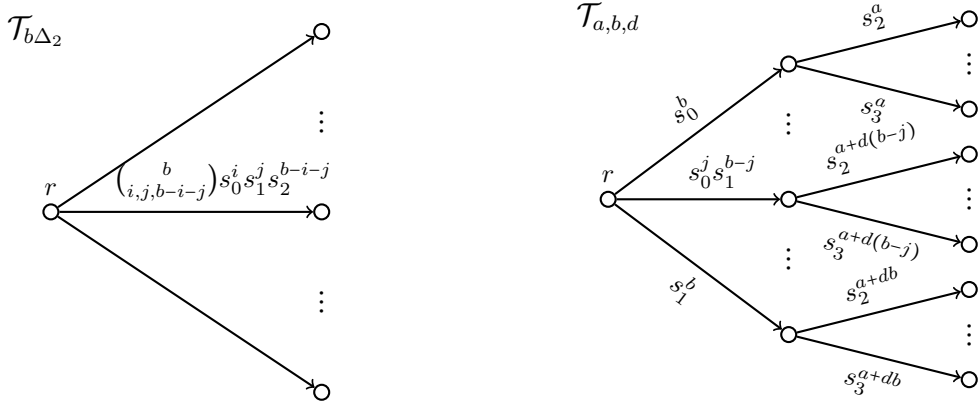
 

\begin{definition}\label{ref: multinomial staged tree model}
Let $(\T,\LL)$ be a multinomial staged tree with index set $I= \sqcup_{\ell\in [m]}I_{\ell}$.
Fix $J$ to be the set of root-to-leaf paths in $\T$, with $|J|=n$.
For $j\in J$, define $p_j$ to be the product of all edge labels in the
path $j$. Let $c_j$ be the coefficient of $p_j$ and $a_j$  the exponent
vector of the symbols $(s_i)_{i\in I}$ in $p_j$. With this
notation, $p_j=c_j\prod_{i\in I}s_{i}^{a_{ij}}$,
where $a_{ij}$ are the entries of $a_j$.
Define the parameter space
\[
\Theta_{\T}:=\{(\theta_i)_{i \in I} \in (0,1)^{|I|} : \sum_{i\in I_{\ell}}\theta_{i}=1 \text{ for all } \ell\in [m]\}
\]
The \emph{multinomial staged tree model} $\M_{(\T,\LL)}$ is the image of the parameterisation
\[
\phi_{\T}: \Theta_{\T} \longrightarrow \Delta_{n-1}^{\circ}, \ (\theta_i)_{i\in I} \mapsto (c_j \prod_{i\in I} \theta_{i}^{a_{ij}})_{j \in J}.
\]
\end{definition}

\begin{remark}
The sum-to-one conditions on the parameter space $\Theta_{\T}$ imply that the image of $\phi_{\T}$ is contained in $\Delta_{n-1}^{\circ}$. The multinomial coefficients on the labels
of $\T$ are necessary for this condition to hold. The model $\M_{(\T,\LL)}$ is an algebraic variety inside $\Delta_{n-1}^{\circ}$
with an explicit parameterisation given by $\phi_{\T}$. For $\theta\in \Theta_{\T}$, $\mathrm{eval}_{\theta}$ is the evaluation map
$s_i \mapsto \theta_i$.
The $j$-th coordinate of $\phi_{\T}$ is $\mathrm{eval}_{\theta}(p_j)$, where $p_{j}=c_j\prod_{i\in I}s_{i}^{a_{ij}}$ (Definition~\ref{ref: multinomial staged tree model}).
For this reason we also use $p_j$ to denote the $j$-th coordinate in the probability simplex $\Delta_{n-1}^\circ$.
\end{remark}
\begin{remark}
 If all of the florets in a multinomial staged tree  have degree one, then it is called a staged tree. Multinomial staged tree models are a generalisation of discrete Bayesian networks \cite{Lauritzen96} and of staged tree models introduced in \cite{Smith2008}. 
\end{remark}

\begin{example} \label{ex:2dimtrees}
Consider the following
experiment with two independent coins: Toss the first coin 
$b$ times and record the number of tails, say this number is $j$. Then toss the second coin $a+d(b-j)$ times, record the number of tails, say it is $i$. An outcome of this experiment is a
 pair $(i,j)$ where $i$ is the number of tails in the
second sequence of coin tosses and $j$ is the number of tails in
the first.
This sequence of events may be represented by a
multinomial staged tree $(\T=(V,E),\LL)$ where
\begin{align*}
   V=\{r\}\cup \{r(j): 0\leq j\leq b\}\cup\{r(i,j):
0\leq j \leq b, 0\leq i \leq a+d(b-j)\} \text{ and }\\
E=\{r\to r(j):0\leq j \leq b\}\cup \{r(j)\to r(i,j):
0\leq j\leq b, 0\leq i \leq a+d(b-j)\}.
\end{align*}
This tree has two stages $S_1=\{s_0,s_1\}, S_2=\{s_2,s_3\}$
that are a formal representation of the parameters of the Bernoulli distributions of the two independent coins.
The set $E(r)$ is labelled by the floret $f_{1,b}$
and the set $E(r(j))$ is labelled by the floret $f_{2,a+d(b-j)}$. Following the conventions set up earlier we see that $\LL(r\to r(j))={b\choose j} s_0^js_1^{b-j}$
and $\LL(r(j)\to r(i,j))={a+d(b-j) \choose i} s_{2}^{a+d(b-j)-i}s_3^i$.
The multinomial staged tree model $\M_{a,b,d}\subset \Delta_n$ assocciated to $(\T,\LL)$,  is the statistical model consisting 
of all probability distributions that follow the experiment just described. Let $p_{ij}$ denote
the probability of the outcome $(i,j)$. The model $\M_{a,b,d}$ is parameterised 
by the map $\phi:\Delta_{1}^{\circ} \times \Delta_1^{\circ} \to \M_{a,b,c}$,
\[ (\theta_0,\theta_1)\times(\theta_2,\theta_3) \mapsto \left(p_{ij}\right)_{\substack{0\leq j\leq b\\ 0\leq i \leq a+d(b-j)}} \text{where } p_{ij}=\tbinom{b }{ j }\tbinom{a+d(b-j)}{ i} \theta_{0}^{j}\theta_1^{b-j}\theta_{2}^i\theta_3^{a+d(b-j)-i}.\]
This model depends on two
independent parameters, thus it has dimension two. The model $\M_{a,b,d}$ is a binary multinomial staged tree model, its tree representation  $\T_{a,b,d}$ is displayed in  Figure~\ref{fig:my_2dtrees}.
\end{example}
\begin{definition} \label{def:ringhoms}
Let $(\T,\LL)$ be a multinomial staged tree. Fix the polynomial rings $\R[P_j: j\in J]$, 
$\R[s_i: i\in I]$ and $\R[s_i: i\in I]/\mathfrak{q}$ where $\mathfrak{q}=\langle 1-\sum_{i\in I_{\ell}} s_i : 
\ell \in [m]\rangle$. We define 
\begin{align*}
 \Psi_{\T}^{\mathrm{toric}}:\R[P_j: j\in J]\to \R[s_i: i\in I]  &\text{ by } P_j \mapsto c_{j}\prod_{i\in I} s_i^{a_{ij}} , \text{ and}
 \\
 \Psi_{\T}:\R[P_j: j\in J]\to 
\R[s_i: i\in I]/\mathfrak{q} &\text{ by } \Psi_{\T}=\pi \circ \Psi_{\T}^{\mathrm{toric}}
\end{align*}
where $\pi:\R[s_i: i\in I]\to 
\R[s_i: i\in I]/\mathfrak{q}$ is the canonical projection to the quotient ring.
The ideal $\ker(\Psi_{\T}^{\mathrm{toric}})$ is the toric ideal associated to $(\T,\LL)$
and $\ker(\Psi_{\T})$ is the model ideal associated to $\M_{(\T,\LL)}$.
Whenever $\ker (\Psi_{\T}) =\ker (\Psi_{\T}^{\mathrm{toric}})$, we call $\M_{(\T,\LL)}$
 a toric model.
\end{definition}
 \begin{remark} \label{rmk:homogenize}
The ideal $\ker (\Psi_{\T})$ defines  the model $\M_{(\T,\LL)}$ implicitly, i.e.
$\M_{(\T,\LL)}= V(\ker (\Psi_{\T}))\cap \Delta_{n}^{\circ}$. Because of the containment
$\ker(\Psi_{\T}^{\mathrm{toric}})\subset \ker (\Psi_{\T})$, $V(\ker(\Psi_{\T}^{\mathrm{toric}}))$ is a toric variety that contains $\M_{(\T,\LL)}$.
 The polynomial $1-\sum_{j\in J}P_j $ is always an element in $\ker(\Psi_{\T})$, hence using this polynomial as a homogenising element, we shall always consider
 $\ker(\Psi_{\T})$ as a homogeneous ideal in $\mathbb{R}[P_{j}:j\in J]$.
 \end{remark}

\subsection{The ideal of model invariants for $\M_{(\T,\LL)}$}
As is common in algebraic geometry,
finding the explicit equations of the prime ideal $\ker (\Psi_{\T})$ is hard. Luckily, the statistical insight of the problem allows us to find a  nonprime ideal, usually referred to as the ideal of model invariants, that defines the model inside the probability simplex. We now define this ideal and postpone the
proof that it has the aforementioned property to Section~\ref{sec:modelinvs}.
\begin{definition}\label{def:idealModelInvariants}
Let $(\T,\LL)$ be a multinomial staged tree. For a vertex $v\in V$, define $[v]:=~¸\{j\in J: \text{ the path }j \text{ goes through the vertex  } v\}$ and  set $P_{[v]}:= \sum_{j\in [v]}P_{j}$.
\begin{align*}
    I_{\mathrm{stages}}&:= \langle bP_{[w]}\left(\sum_{|K|=a, k_{i_q}\geq 1} k_{i_q}P_{[v(K)]}\right)-aP_{[v]}\left(\sum_{|K'|=b,k_{i_q}'\geq 1}k_{i_q}'P_{[w(K')]}\right) :\\ &v\sim w, \mathrm{im}(\LL_v)=f_{\ell,a}, \mathrm{im}(\LL_w)=f_{\ell,b}, \ell \in [m], 1\leq q \leq |I_{\ell}| \rangle, \text{ and } \\
    I_{\mathrm{vertices}} &:= \langle C_{(K^3,K^4)}P_{[v(K^1)]}P_{[v(K^2)]}-
    C_{(K^1,K^2)}P_{[v(K^3)]}p_{[v(K^4)]}: v\in \widetilde{V},\\
    & \; \mathrm{im}(\LL_v)=f_{\ell,a},
    K^1,K^2,K^3,K^4\in \mathbb{N}^{|I_{\ell}|}, |K^1|=
    |K^2|=|K^3|=|K^4|=a, \\ & K^1+K^2=K^3+K^4, C_{(K^i,K^j)}={a \choose K^i}{a \choose K^j},\; i=1,3,\;j=2,4\rangle.
\end{align*}
The \emph{ideal of model invariants of $(\T,\LL)$} is $I_{\M(\T,\LL)}:=I_{\mathrm{stages}}+I_{\mathrm{vertices}}+\langle 1-\sum_{j\in J}P_j\rangle$.
\end{definition}
The previous definition  indicates,
that there are equations that must hold for every pair of vertices with the same associated stage,  and  equations
that must hold for every vertex.
The motivation for this definition of the ideal of model
invariants arises from the technical Lemma~\ref{lem: identifiability} in Appendix~\ref{ap:technical}.
\begin{remark}\label{rem:veronese}
The generators of $I_{\mathrm{vertices}}$ for each fixed vertex $v$
are similar to the Veronese relations of the embedding  $\nu_{a}:\mathbb{P}^{|I_{\ell}|-1}\to \mathbb{P}^{M}$ by monomials of total
degree $a$. The only difference is in the coefficients, defined in Lemma \ref{lem: identifiability} part (2), that are needed for cancellation. 
\end{remark}
\begin{remark}
By definition, $I_{\M(\T,\LL)}$ always contains the sum to one condition $1-\sum_{j\in J}P_{j}$, thus in a similar
way as for $\ker(\Psi_{\T})$ in Remark~\ref{rmk:homogenize}, we always consider $I_{\M(\T,\LL)}$ as a homogeneous
ideal generated by $I_{\mathrm{stages}}$ and $I_{\mathrm{vertices}}$.
\end{remark}
\subsection{Algebraic lemmas for multinomial staged trees}
\label{sec:lemmas}
To understand the defining equations of  $\ker(\Psi_{\T})$ and the case when this ideal is toric, it is important to establish several lemmas
that describe algebraic relations that hold in $\mathbb{R}[P_j:j\in J]$, $\mathbb{R}[s_i:i\in I]$, $\mathbb{R}[P_j:j\in J]/\ker(\Psi_{\T})$ and $\mathbb{R}[P_j:j\in J]/I_{\M(\T,\LL)}$. The reader may decide to
skip this section and only get back to it when the lemmas are used in the
proofs of Theorem~\ref{thm:saturation} and Theorem~\ref{thm:balancedformultinomialtrees}.

\begin{definition} \label{def:interpolating}
Let $(\T,\LL)$ be a multinomial staged tree with $\T=(V,E)$. For $v\in V$, let $\Lambda_{v}$ denote the set of all $v$-to-leaf
paths in $\T$. A path $\lambda\in \Lambda_{v}$ is a sequence of
edges $v\to v_1\to \dotsm \to v_{\alpha}$ where $v_{\alpha}$ is a leaf of $\T$.
For each $v\in V$ we define the \emph{interpolating polynomial of $v$}, $t(v)\in \R[s_i: i\in I]$, by
\[
t(v):= \sum_{\lambda \in \Lambda_{v}} \prod_{e\in \lambda}\LL(e).
\]
If $v$ is a leaf, $t(v):=1$. 
We denote by $\overline{t(v)}$, the image of $t(v)$
under the canonical projection to $\R[s_i: i\in I]/\mathfrak{q}$. 
Note that for all $v\in V$, $\overline{t(v)}=1$.
\end{definition}
 
\begin{lemma} \label{lem:evaluation}
Let $(\T,\LL)$ be a multinomial staged tree where $\T=(V,E)$ and let  $v\in \widetilde{V}$ be such that $\mathrm{im}(\LL_v)=f_{\ell,a}$. 
\begin{itemize}
    \item[(1)]The polynomial $t(v)$ satisfies
\[t(v)=\sum_{|K|=a}{a \choose K}\prod_{i\in I_{\ell}} s_i^{k_i}\cdot t(v(K));\]
    \item[(2)] The image of $P_{[v]}$ under $\Psi_{\T}^{\mathrm{toric}}$ is
    $\left(\prod_{e\in \lambda_{r, v}}\LL(e)\right)\cdot t(v)$, where $\lambda_{r,v}$  is the set of edges in the root-to-$v$ path in $\T$. Moreover $\Psi_{\T}(P_{[v]})= \prod_{e\in \lambda_{r,v}}\LL(e)$.
\end{itemize}
\end{lemma}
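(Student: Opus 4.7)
The plan is to prove both statements by unwinding the definition of $t(v)$ and the decomposition of paths in $\T$, with the only real bookkeeping needed for the second half of (2).

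For part (1), I would partition the set $\Lambda_v$ of $v$-to-leaf paths according to the first edge. Since $\mathrm{im}(\LL_v)=f_{\ell,a}$, the children of $v$ are exactly the vertices $v(K)$ with $K\in\mathbb{N}^{|I_\ell|}$, $|K|=a$, and any $\mu\in\Lambda_v$ has the form $(v\to v(K))\cup \lambda$ for a uniquely determined $K$ and a uniquely determined $\lambda\in\Lambda_{v(K)}$. Applying $\LL$ and taking products, the label of $v\to v(K)$ is $\binom{a}{K}\prod_{i\in I_\ell}s_i^{k_i}$ by the conventions fixed in Section~\ref{subsec:defmultitrees}, so
\[
t(v)=\sum_{|K|=a}\LL(v\to v(K))\cdot t(v(K))=\sum_{|K|=a}\binom{a}{K}\prod_{i\in I_\ell}s_i^{k_i}\cdot t(v(K)),
\]
which is the desired identity. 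The case $\ch(v)=\emptyset$ is vacuous and consistent with the convention $t(v)=1$.

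For part (2), I would perform the analogous decomposition at the global level. Every root-to-leaf path $j\in[v]$ factors uniquely as $\lambda_{r,v}$ concatenated with some $\lambda\in\Lambda_v$, so
\[
\Psi_\T^{\mathrm{toric}}(P_{[v]})=\sum_{j\in[v]}\prod_{e\in j}\LL(e)=\Bigl(\prod_{e\in\lambda_{r,v}}\LL(e)\Bigr)\sum_{\lambda\in\Lambda_v}\prod_{e\in\lambda}\LL(e)=\Bigl(\prod_{e\in\lambda_{r,v}}\LL(e)\Bigr)t(v),
\]
which is the first assertion. Applying the projection $\pi\colon\R[s_i]\to\R[s_i]/\mathfrak{q}$ and using $\overline{t(v)}=1$ then yields $\Psi_\T(P_{[v]})=\prod_{e\in\lambda_{r,v}}\LL(e)$.

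The only genuine content is the statement $\overline{t(v)}=1$ recorded in Definition~\ref{def:interpolating}. If it needs to be justified, I would prove it by descending induction on the distance from $v$ to a farthest leaf, using part (1): the base case is $v$ a leaf, where $t(v)=1$ by convention. For the inductive step, part (1) together with the inductive hypothesis $\overline{t(v(K))}=1$ gives
\[
\overline{t(v)}=\sum_{|K|=a}\binom{a}{K}\prod_{i\in I_\ell}s_i^{k_i}=\Bigl(\sum_{i\in I_\ell}s_i\Bigr)^{a}=1\quad\text{in }\R[s_i]/\mathfrak{q},
\]
since $\sum_{i\in I_\ell}s_i-1\in\mathfrak{q}$. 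No step of this plan is a real obstacle; the lemma is essentially a bookkeeping statement that sets up the more substantial arguments of Theorem~\ref{thm:saturation} and Theorem~\ref{thm:balancedformultinomialtrees}.
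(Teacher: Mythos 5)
Your proof is correct and follows essentially the same route as the paper: decomposing $v$-to-leaf paths by their first edge for (1), and factoring each root-to-leaf path through $v$ as $\lambda_{r,v}$ followed by a $v$-to-leaf path for (2). Your added induction justifying $\overline{t(v)}=1$ is a nice touch that the paper merely asserts in Definition~\ref{def:interpolating}, but it does not change the argument.
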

\begin{proof}
$(1)$ Any path in $\Lambda_{v}$ goes through a child $v(K)$ of $v$.
The sum of all the edge products corresponding to the paths that go through child $v(K)$ is equal to the sum of all the edge products corresponding to the paths starting at $v(K)$ ($t(v(K))$) multiplied by the label of the edge from $v$ to $v(K)$
(${a \choose K}\prod_{i\in I_{\ell}} s_i^{k_i}$). Taking the sum of this expression over all children of $v$ gives the desired result.

\noindent $(2)$ Let $j$ be a root-to-leaf path that goes through $v$.
Then $j$ is the concatenation of a path from the root to $v$, denoted by $\lambda_{r,v}$ and a path from $v$ to the leaf denoted by $\lambda_{v,j}$. Then
\begin{align*}
    \Psi_{\T}^{\mathrm{toric}}(P_{[v]}) &= \sum_{j\in[v]}
    \prod_{e\in \lambda_{r,v}}\LL(e) \prod_{e\in \lambda_{v,j}}\LL(e) = \left(\prod_{e\in \lambda_{r,v}}\LL(e)\right)\left(\sum_{j\in [v]} \prod_{e\in \lambda_{v,j}}\LL(e)\right)\\
    &= \left(\prod_{e\in \lambda_{r,v}}\LL(e)\right) t(v).
\end{align*}
The second statement follows by noting that $\overline{t(v)}=1$.
\end{proof}

\begin{lemma}
Let $(\T,\LL)$ be a multinomial staged tree, there is a containment of ideals
$I_{\M(\T,\LL)}\subset \ker(\Psi_{\T})$ in $\mathbb{R}[P_j:j\in J]$.
\end{lemma}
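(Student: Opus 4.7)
The plan is to verify directly that each of the three families of generators of $I_{\M(\T,\LL)}$ lies in $\ker(\Psi_{\T})$: the sum-to-one relation $1-\sum_{j\in J}P_j$, the generators of $I_{\mathrm{vertices}}$, and the generators of $I_{\mathrm{stages}}$. The key algebraic input is Lemma~\ref{lem:evaluation}(2), which tells us that $\Psi_{\T}(P_{[v]})$ equals the product $R_v := \prod_{e\in \lambda_{r,v}}\LL(e)$ of edge labels along the root-to-$v$ path, working modulo $\mathfrak{q}$.

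For the sum-to-one relation, I would apply Lemma~\ref{lem:evaluation}(2) at $v=r$: since $P_{[r]}=\sum_{j\in J}P_j$ and the root-to-root path is empty, $\Psi_{\T}^{\mathrm{toric}}(\sum_j P_j) = t(r)$, and the image $\overline{t(r)}$ in $\R[s_i:i\in I]/\mathfrak{q}$ is $1$, so $\Psi_{\T}(1-\sum_j P_j)=0$.

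For a generator of $I_{\mathrm{vertices}}$ associated to a vertex $v$ with $\mathrm{im}(\LL_v)=f_{\ell,a}$ and multi-indices with $K^1+K^2=K^3+K^4$, I would use
\[
\Psi_{\T}(P_{[v(K)]}) = R_v \cdot \tbinom{a}{K}\prod_{q} s_{i_q}^{k_{i_q}},
\]
so that
\[
\Psi_{\T}\!\left(P_{[v(K^1)]}P_{[v(K^2)]}\right) = R_v^2\, C_{(K^1,K^2)}\prod_{q} s_{i_q}^{k_{i_q}^1+k_{i_q}^2}.
\]
Since $K^1+K^2=K^3+K^4$, the corresponding monomials in $s_{i_q}$ agree, and the generator $C_{(K^3,K^4)}P_{[v(K^1)]}P_{[v(K^2)]}-C_{(K^1,K^2)}P_{[v(K^3)]}P_{[v(K^4)]}$ maps to $0$.

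The main computational step, and the only non-immediate one, is handling $I_{\mathrm{stages}}$. For vertices $v\sim w$ in the same stage with $\mathrm{im}(\LL_v)=f_{\ell,a}$ and $\mathrm{im}(\LL_w)=f_{\ell,b}$, I need the identity
\[
\sum_{|K|=a,\,k_{i_q}\geq 1} k_{i_q}\tbinom{a}{K}\prod_{p} s_{i_p}^{k_{i_p}} \;=\; a\, s_{i_q}\!\left(\sum_{p\in I_\ell} s_{i_p}\right)^{a-1},
\]
which follows from differentiating the multinomial expansion of $(\sum_{p\in I_\ell} s_{i_p})^a$ with respect to $s_{i_q}$ and multiplying back by $s_{i_q}$. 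Applying Lemma~\ref{lem:evaluation}(2) and reducing modulo $\mathfrak{q}$ (where $\sum_{p\in I_\ell}s_{i_p}=1$), this gives
\[
\Psi_{\T}\!\left(\sum_{|K|=a,\,k_{i_q}\geq 1} k_{i_q}P_{[v(K)]}\right) = a\, R_v\, s_{i_q},
\]
and symmetrically for $w$. Substituting into the generator of $I_{\mathrm{stages}}$ yields
\[
\Psi_{\T}\!\left(bP_{[w]}\!\!\sum_{K} k_{i_q}P_{[v(K)]} - aP_{[v]}\!\!\sum_{K'} k_{i_q}'P_{[w(K')]}\right) = ab\, R_v R_w s_{i_q} - ab\, R_v R_w s_{i_q} = 0.
\]
Since each generator of $I_{\M(\T,\LL)}$ maps to $0$ under $\Psi_{\T}$, the containment $I_{\M(\T,\LL)}\subset \ker(\Psi_{\T})$ follows. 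The only subtlety worth flagging is making sure the reduction modulo $\mathfrak{q}$ is carried out at the right moment so that the partial derivative identity collapses correctly; everything else is direct bookkeeping on root-to-leaf paths.
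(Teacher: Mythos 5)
Your proposal is correct and follows essentially the same route as the paper: reduce to checking that each generator maps to zero under $\Psi_{\T}$, use Lemma~\ref{lem:evaluation}(2) to express $\Psi_{\T}(P_{[v]})$ as the product of edge labels along the root-to-$v$ path, and establish the key identity $\Psi_{\T}\bigl(\sum_{|K|=a,\,k_{i_q}\geq 1}k_{i_q}P_{[v(K)]}\bigr)=a\,s_{i_q}\prod_{e\in\lambda_{r,v}}\LL(e)$ (the paper derives it by manipulating multinomial coefficients, which is equivalent to your differentiation argument). The only difference is that you write out the $I_{\mathrm{vertices}}$ and sum-to-one cases explicitly, which the paper leaves as ``similar.''
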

\begin{proof}
To show that $I_{\M(\T,\LL)}\subset \ker(\Psi_{\T})$, it suffices to show that
the generators of $I_{\mathrm{stages}}$ and $I_{\mathrm{vertices}}$ are zero after applying
$\Psi_{\T}$. We present the proof for the generators of $I_{\mathrm{stages}}$,
the proof for $I_{\mathrm{vertices}}$ is similar and also uses Lemma~\ref{lem:evaluation}.
A generator of $I_{\M(\T,\LL)}$ is of the form
\begin{align*}
bP_{[w]}\left(\sum_{|K|=a, k_{i_q}\geq 1} k_{i_q}P_{[v(K)]}\right)-aP_{[v]}\left(\sum_{|K'|=b,k_{i_q}'\geq 1}k_{i_q}'P_{[w(K')]}\right),
\end{align*}
where $v,w\in \widetilde{V}$, $\mathrm{im}(\LL_{v})=f_{\ell,a}$, $\mathrm{im}(\LL_{w})=f_{\ell,b}$ for some $\ell\in [m]$ and a fixed $q$, $1\leq q\leq |I_{\ell}|$.\\

\noindent\underline{Claim:} $\quad\Psi_{\T}(\sum_{\substack{|K|=a\\ k_{i_q}\geq 1}}k_{i_q}P_{[v(K)]})=as_{i_q}\prod_{e\in \lambda_{r,v}}\LL(e)$.\newline
\noindent Using Lemma~\ref{lem:evaluation}, we compute $\Psi_{\T}(P_{[v(K)]})$.
\begin{align*}
    \Psi_{\T}(\sum_{\substack{|K|=a\\ k_{i_q}\geq 1}}k_{i_q}P_{[v(K)]}) &= \sum_{\substack{|K|=a\\ k_{i_q}\geq 1}}k_{i_q}\Psi_{\T}(P_{[v(K)]})
   = \sum_{\substack{|K|=a\\ k_{i_q}\geq 1}}k_{i_q}\left(\prod_{e\in \lambda_{r,v}}\LL(e)\right){a \choose K}\prod_{\alpha=1}^{|I_\ell|}s_{i}^{k_{i_\alpha}}\\
    &=\left(\prod_{e\in \lambda_{r,v}}\LL(e)\right)\left(\sum_{\substack{|K|=a\\ k_{i_q}\geq 1}}k_{i_q}{a\choose K}\prod_{\alpha=1}^{|I_\ell|}s_{i}^{k_{i_\alpha}}\right)\\
    &= \left(\prod_{e\in \lambda_{r,v}}\LL(e)\right)
    \left(\sum_{\substack{|K|=a\\ k_{i_q}\geq 1}} k_{i_q}\frac{a(a-1)!}{k_{i_1}!\cdots k_{i_{|I_{\ell}|}}!}s_{i_q}s_{i_q}^{k_{q}-1}\prod_{\substack{\alpha=1\\
    \alpha\neq q}}^{|I_\ell|}s_{i}^{k_{i_\alpha}}\right)\\
    &=\left(\prod_{e\in \lambda_{r,v}}\LL(e)\right)\left(\sum_{|K|=a-1}as_{i_q}{a-1\choose K}\prod_{\alpha=1}^{|I_\ell|}s_{i}^{k_{i_\alpha}}\right)\\
    &= \left(\prod_{e\in \lambda_{r,v}}\LL(e)\right) as_{i_q}(\sum_{\alpha=1}^{|I_\ell|}s_{i_{\alpha}})^{a-1}=\left(\prod_{e\in \lambda_{r,v}}\LL(e)\right) as_{i_q}.
\end{align*}
The last equality follows from the fact that $\sum_{\alpha=1}^{|I_\ell|}s_{i_{\alpha}}=1$ in
$\R[s_i: i\in I]/\mathfrak{q}$. The claim applied to $w\in \widetilde{V}$, implies $\Psi_{\T}(\sum_{\substack{|K|=b\\ k_{i_q}\geq 1}}k_{i_q}P_{[w(K)]})=\left(\prod_{e\in \lambda_{r,w}}\LL(e)\right)bs_{i_q}$. 
Thus, by Lemma~\ref{lem:evaluation}, \begin{align*}
    \Psi_{\T}\Bigg( bP_{[w]}\bigg(\sum_{|K|=a, k_{i_q}\geq 1} k_{i_q}P_{[v(K)]}\bigg)-aP_{[v]}\bigg(\sum_{|K'|=b,k_{i_q}'\geq 1}k_{i_q}'P_{[w(K')]}\bigg)\Bigg) &=\\
    b\prod_{e\in \lambda_{r,w}}\LL(e)\cdot as_{i_q}\prod_{e\in \lambda_{r,v}}\LL(e)-a\prod_{e\in \lambda_{r,v}}\LL(e)\cdot bs_{i_q}\prod_{e\in \lambda_{r,w}}\LL(e)&=0.
\end{align*}
\end{proof}

\subsection{Defining equations of binary multinomial staged trees}
\label{sec:modelinvs}
In this section and the next, we prove Theorem~\ref{thm:saturation} and Theorem~\ref{thm:balancedformultinomialtrees} for binary multinomial staged trees; despite being unable to provide a proof, we believe these statements also hold for non-binary multinomial staged trees. First we show that the ring homomorphism $\Psi_{\T}$ admits an inverse when localised at a suitable
element. From this it follows as a corollary that the ideal of model invariants defines $\M_{(\T,\LL)}$ inside the probability simplex.
\begin{theorem} \label{thm:saturation}
Let $(\T, \LL)$ be a binary multinomial staged tree and define $\mathbf{P}:=\prod_{v\in V}P_{[v]}$. Then, the localised map
\begin{align*}
    (\Psi_{\T})_{\mathbf{P}}&: \left(\mathbb{R}[P_{j}: j\in J]/I_{\M(\T,\LL)}\right)_{\mathbf{P}}\to \left(\mathbb{R}[s_{i}:i\in I]/\mathfrak{q}\right)_{\Psi_{\T}(\mathbf{P})},
\end{align*}
is an isomorphism of $\mathbb{R}$-algebras. Therefore $(I_{\M(\T,\LL)})_{\mathbf{P}}=(\ker(\Psi_{\T}))_{\mathbf{P}}$ and thus $(I_{\M(\T,\LL)}: \mathbf{P}^{\infty})=\ker(\Psi_{\T})$.
\end{theorem}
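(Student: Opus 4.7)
The plan is to construct an explicit two-sided inverse for the localised map $(\Psi_{\T})_{\mathbf{P}}$, from which the saturation identity follows by standard commutative algebra. The construction is guided by the computation in the proof of the preceding lemma: by Lemma~\ref{lem:evaluation}(2), $\Psi_{\T}(P_{[v]}) = \prod_{e \in \lambda_{r,v}}\LL(e)$, and the internal claim in that proof shows that for any $v \in \widetilde V$ with $\mathrm{im}(\LL_v)=f_{\ell,a}$ and any $q$,
\[
\Psi_{\T}\Bigl(\sum_{|K|=a,\, k_{i_q}\geq 1} k_{i_q} P_{[v(K)]}\Bigr) \;=\; a\, s_{i_q}\, \prod_{e\in\lambda_{r,v}}\LL(e).
\]
So the ratio $\tfrac{1}{a}\bigl(\sum_{|K|=a,\,k_{i_q}\geq 1}k_{i_q}P_{[v(K)]}\bigr)/P_{[v]}$ is a natural preimage of $s_{i_q}$ once $P_{[v]}$ is inverted.

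First, I would define a candidate $\mathbb{R}$-algebra map
\[
\sigma:\bigl(\mathbb{R}[s_i: i\in I]/\mathfrak{q}\bigr)_{\Psi_{\T}(\mathbf P)} \longrightarrow \bigl(\mathbb{R}[P_j: j\in J]/I_{\M(\T,\LL)}\bigr)_{\mathbf P}
\]
by sending each $s_{i_q}$ (with $i_q\in I_\ell$) to the ratio above, computed at any chosen vertex $v$ with $\mathrm{im}(\LL_v)=f_{\ell,a}$. Well-definedness requires two checks: independence from the choice of $v$, and respect of the relation $\sum_{i\in I_\ell}s_i=1$. The first is exactly what the generators of $I_{\mathrm{stages}}$ encode (rewriting the stage equation as cross-multiplied ratios shows two choices give the same element in the localisation modulo $I_{\M(\T,\LL)}$). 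The second is a direct computation: in the binary case, summing the ratios for $s_{i_1}$ and $s_{i_2}$ collapses because $k_{i_1}+k_{i_2}=a$ for every $K$ with $|K|=a$, and $P_{[v]}=\sum_{|K|=a}P_{[v(K)]}$ holds as a polynomial identity (the union $[v]=\sqcup_K[v(K)]$ is disjoint).

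Next, verifying $(\Psi_{\T})_{\mathbf P}\circ \sigma=\mathrm{id}$ is immediate from the formula above together with $\Psi_{\T}(P_{[v]})=\prod_{e\in\lambda_{r,v}}\LL(e)$. The harder direction is $\sigma\circ(\Psi_{\T})_{\mathbf P}=\mathrm{id}$. I would prove by induction on the depth of $v\in V$ (from the leaves upward) the identity
\[
\sigma\bigl(\Psi_{\T}(P_{[v]})\bigr) \;=\; P_{[v]} \quad\text{in } \bigl(\mathbb{R}[P_j: j\in J]/I_{\M(\T,\LL)}\bigr)_{\mathbf P},
\]
with the case $v$ a leaf trivial. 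For the inductive step at an internal $v$ with $\mathrm{im}(\LL_v)=f_{\ell,a}$ and $I_\ell=\{i_1,i_2\}$, one writes $\Psi_{\T}(P_{[v]})=\prod_{e\in\lambda_{r,v}}\LL(e)$ and uses Lemma~\ref{lem:evaluation}(1) to relate this product to the children's contributions via the factor $\sum_{|K|=a}\binom{a}{K}s_{i_1}^{k_1}s_{i_2}^{k_2}$. Applying $\sigma$ term by term and using the binomial expansion, one must show the resulting sum equals $P_{[v]}$ modulo $I_{\M(\T,\LL)}$; this is precisely where the Veronese-like relations in $I_{\mathrm{vertices}}$ are needed, since they convert products of the stage-$\ell$ ratios at $v$ back into the marginals $P_{[v(K)]}$ with the correct multinomial coefficients. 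The disjoint-union identity $P_{[v]}=\sum_{|K|=a}P_{[v(K)]}$ then closes the induction.

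Finally, once $(\Psi_{\T})_{\mathbf P}$ is shown to be an isomorphism, the equality $(I_{\M(\T,\LL)})_{\mathbf P}=(\ker(\Psi_{\T}))_{\mathbf P}$ is automatic. Since $\ker(\Psi_{\T})$ is prime (its quotient injects into a polynomial ring modulo $\mathfrak q$, which is a domain) and $\Psi_{\T}(\mathbf P)\neq 0$, the element $\mathbf P$ is not in $\ker(\Psi_{\T})$, so $(\ker(\Psi_{\T}):\mathbf P^\infty)=\ker(\Psi_{\T})$; combining with $I_{\M(\T,\LL)}\subseteq \ker(\Psi_{\T})$ and the preservation of saturation under localisation gives $(I_{\M(\T,\LL)}:\mathbf P^\infty)=\ker(\Psi_{\T})$. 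The main obstacle is the inductive step in verifying $\sigma\circ\Psi_{\T}=\mathrm{id}$: isolating the correct substitution order so that the vertex relations in $I_{\mathrm{vertices}}$ cleanly recombine the binomial sum into $P_{[v]}$, and keeping the bookkeeping of multinomial coefficients consistent with the binary floret structure, is the delicate combinatorial heart of the argument.
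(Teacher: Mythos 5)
Your proposal follows essentially the same route as the paper: the same explicit inverse $s_{i_q}\mapsto \bigl(\sum_{|K|=a,\,k_{i_q}\geq 1}k_{i_q}P_{[v(K)]}\bigr)/(aP_{[v]})$, well-definedness via the $I_{\mathrm{stages}}$ generators, the easy composite checked on the $s_{i_q}$ via Lemma~\ref{lem:evaluation}, and the hard composite reduced to showing that $\sigma$ applied to a floret label $\binom{a}{k}s_{i_1}^{k}s_{i_2}^{a-k}$ recombines into $P_{[v(K)]}/P_{[v]}$ using the Veronese-type relations in $I_{\mathrm{vertices}}$. Two small remarks. First, your induction runs the wrong way: for a leaf $v$ at the end of path $j$ one has $P_{[v]}=P_j$ and $\Psi_{\T}(P_j)$ is the full product of edge labels, so the leaf case is the entire hard statement, not the trivial one; the induction should start at the root (where $P_{[r]}=1$) and descend to the children, or equivalently one can telescope the product $\prod_\gamma P_{[v_{\gamma+1}]}/P_{[v_\gamma]}$ along the path as the paper does — the content is identical. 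Second, the recombination identity you defer as ``the delicate combinatorial heart'' is exactly Lemma~\ref{lem:finally}, which the paper proves in Appendix~\ref{ap:technical} from the generators of $I_{\mathrm{vertices}}$ via Lemmas~\ref{lem:auxiliaryLemma} and~\ref{conj:toBeProven}; you have correctly identified both the obstacle and the relations that resolve it, but that computation is the bulk of the remaining work. Your explicit check that $\sigma$ respects $\sum_{i\in I_\ell}s_i=1$ is a detail the paper leaves implicit, and is a worthwhile addition.
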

\begin{proof}
We define a ring homomorphism
\begin{align*}
    \varphi: \left(\mathbb{R}[s_{i}:i\in I]/\mathfrak{q}\right)_{\Psi_{\T}(\mathbf{P})} \to \left(\mathbb{R}[P_{j}: j\in J]/I_{\M(\T,\LL)}\right)_{\mathbf{P}}
\end{align*}
and show that it is a two sided inverse for $(\Psi_{\T})_{\mathbf{P}}$. For $\ell \in [m]$ and $1\leq q \leq |I_{\ell}|$, let $v$ be a vertex with
$\mathrm{im}(\LL_{v})=f_{\ell,a}$ and define
\begin{align*}
    \varphi(s_{i_q}) =\frac{\sum_{|K|=a, k_{i_q}\geq 1} k_{i_q}P_{[v(K)]}}{aP_{[v]}}.
\end{align*}
Note that $\varphi$ is well defined: If $w$ is another vertex with
$\mathrm{im}(\LL_{w})=f_{\ell,b}$, then 
\begin{align*}
    \varphi(s_{i_q}) =\frac{\sum_{|K|=a, k_{i_q}\geq 1} k_{i_q}P_{[v(K)]}}{aP_{[v]}}=\frac{\sum_{|K'|=b, k'_{i_q}\geq 1} k_{i_q}P_{[w(K)]}}{bP_{[w]}}
\end{align*}
because  $bP_{[w]}\left(\sum_{|K|=a, k_{i_q}\geq 1} k_{i_q}P_{[v(K)]}\right)-aP_{[v]}\left(\sum_{|K'|=b,k_{i_q}'\geq 1}k_{i_q}'P_{[w(K')]}\right) \in I_{\M(\T,\LL)}$.
First, we check that $(\Psi_{\T})_{\mathbf{P}}\circ \varphi = \mathrm{Id}$,
\begin{align*}
    (\Psi_{\T})_{\mathbf{P}}(\varphi(s_{i_q})) & =(\Psi_{\T})_{\mathbf{P}}\left(
    \frac{\sum_{|K|=a, k_{i_q}\geq 1}k_{i_q}P_{[v(K)]}}{aP_{[v]}}\right) = \sum_{|K|=a, k_{i_q}\geq 1}\frac{k_{i_q}}{a}(\Psi_{\T})_{\mathbf{P}}\left(\frac{P_{[v(K)]}}{P_{[v]}}\right) \\
    &= \sum_{|K|=a,k_{i_q}\geq 1}\frac{k_{i_q}}{a}\frac{\Psi_{\T}(P_{[v(K)]})}{\Psi_{\T}(P_{[v]})} =\sum_{|K|=a, k_{i_q}\geq 1} \frac{k_{i_q}}{a} {a \choose K} \prod_{\alpha=1}^{|I_{\ell}|} s_{i_\alpha}^{k_{i_\alpha}}=s_{i_q}.
\end{align*}
The second to last equality follows by  using the expression for $\Psi_{\T}(P_{[v]})$ presented in Lemma~\ref{lem:evaluation} part $(2)$, the same result is used to compute $\Psi_{\T}(P_{[v(K)]})$, finally  their quotient $\Psi_{\T}(P_{[v(K)]})/\Psi_{\T}(P_{[v]})$ is exactly ${a \choose K} \prod_{\alpha=1}^{|I_{\ell}|} s_{i_\alpha}^{k_{i_\alpha}}$.
The last equality is obtained by using the same argument as in Lemma~\ref{lem: identifiability} part $(3)$.

\noindent Next, we verify $\varphi \circ (\Psi_{\T})_{\mathbf{P}} = \mathrm{Id}$, which amounts to proving that
$(\varphi\circ\Psi_{\T,\mathbf{P}})(P_j)=P_j$ for each $j\in J$. From this point on we further assume that 
$|I_{\ell}|=2$ for all $\ell \in [m]$. Fix $j\in J$ and let $v_1\to v_2\to \cdots \to v_{\alpha}$ be the root-to-leaf path $j$. By Definition~\ref{def:ringhoms}
\begin{equation} \label{eq:label products}
   (\Psi_{\T})_{\mathbf{P}}(P_j)=c_j \prod_{i\in I} s_i^{a_{ij}}= \LL_{v_1}(v_1\to v_2)\cdots \LL_{v_{\alpha-1}}(v_{\alpha-1}\to v_\alpha). 
\end{equation}
 Where for each $\gamma\in [\alpha-1]$, $\mathrm{im}(\LL_{v_\gamma})=f_{\ell_\gamma,a_\gamma}$ for
 some $\ell_{\gamma}\in [m] $ and $ a_{\gamma}\in \Z_{\geq 1}$. By Remark~\ref{rmk:stratified}, none of the florets $f_{\ell_{\gamma},a_{\gamma}}$
 share the same set of symbols.
  Moreover, for each $\gamma\in[\alpha-1]$,
  \begin{equation} \label{eq:floret label}
       \LL_{v_{\gamma}}(v_{\gamma}\to v_{\gamma+1})={a_{\gamma}\choose k_{\gamma}}s_{\gamma,i_1}^{k_{\gamma}}s_{\gamma,i_2}^{a-k_{\gamma}} \text{ where } S_{\ell_{\gamma}}=\{s_{\gamma,i_1},s_{\gamma,i_2}\}, 0\leq k_{\gamma}\leq a_{\gamma}.
  \end{equation}
With this notation, we also deduce that $v_{\gamma+1}=v_{\gamma}(k_{\gamma},a-k_{\gamma})$. Now we apply $\varphi$ to (\ref{eq:label products}), 
  use that $\varphi$ is a ring homomorphism and use equation (\ref{eq:floret label}) to obtain
  \begin{align}
      \varphi((\Psi_{\T})_{\mathbf{P}}(P_j))&=\varphi(\LL_{v_1}(v_1\to v_2))\cdots \varphi(\LL_{v_{\alpha-1}}(v_{\alpha-1}\to v_{\alpha})) \nonumber\\
      &= \prod_{\gamma=1}^{\alpha-1}{a_{\gamma} \choose k_{\gamma}}\varphi(s_{\gamma,i_1})^{k_{\gamma}}
      \varphi(s_{\gamma,i_2})^{a-k_{\gamma}}.\label{eq:nested}
  \end{align}
  Using the definition of $\varphi$, for each $\gamma\in [\alpha-1]$,
  \begin{align*}
      \varphi(s_{\gamma,i_1})=
      \frac{
      \sum_{k=1}^{a_{\gamma}}kP_{[v_{\gamma}(k,a_{\gamma}-k)]}}{a_{\gamma}P_{[v_{\gamma}]}}  \;\;\;\text{ and }\;\;\;
      \varphi(s_{\gamma,i_2})= \frac{
      \sum_{k=1}^{a_{\gamma}}kP_{[v_{\gamma}(a_{\gamma}-k,k)]}}{a_{\gamma}P_{[v_{\gamma}]}}.
  \end{align*}
 By Lemma~\ref{lem:finally}, with $a=a_{\gamma}$,
 $l_1= \sum_{k=1}^{a_{\gamma}}kP_{[v_{\gamma}(k,a_{\gamma}-k)]}$, $l_2= \sum_{k=1}^{a_{\gamma}}kP_{[v_{\gamma}(a_{\gamma}-k,k)]}$ and $k_0=k_{\gamma}$, we conclude that
 \[
 {a_\gamma \choose k_{\gamma}}\varphi(s_{\gamma,i_1})^{k_{\gamma}}\varphi(s_{\gamma,i_1})^{a-k_{\gamma}}=\frac{P_{[v_{\gamma}(k_{\gamma},a-k_{\gamma})]}}{P_{[v_{\gamma}]}}
 .\]
Thus, continuing from (\ref{eq:nested}) we have
\begin{align*} \label{eq:alternating}
   \varphi((\Psi_{\T})_{\mathbf{P}}(P_j))&=\prod_{\gamma=1}^{\alpha-1}\frac{P_{[v_{\gamma}(k_{\gamma},a-k_{\gamma}])]}}{P_{[v_{\gamma}]}}\\
    &=\frac{P_{[v_{1}(k_1,a_1-k_1)]}}{P_{[v_1]}}\frac{P_{[v_2(k_2,a_2-k_2)]}}{P_{[v_2]}} \cdots\frac{P_{[v_{\alpha-1}(k_{\alpha-1},a_{\alpha-1}-k_{\alpha})]}}{P_{[v_{\alpha-1}]}}
    = \frac{P_{[v_{\alpha}]}}{P_{[v_1]}}.
\end{align*}
To obtain the previous cancellation we
used the fact that 
for each $\gamma\in [\alpha-1]$, $v_{\gamma+1}=v_{\gamma}(k_{\gamma},a-k_{\gamma})$, hence $P_{[v_{\gamma}(k_{\gamma},a-k_{\gamma})]}=P_{[v_{\gamma+1}]}$.
Note that $P_{[v_1]}=1$ by definition of $I_{\M(\T,\LL)}$ and $P_{[v_\alpha]}=P_{j}$ because $v_{\alpha}$ is the last vertex in the path $j$. Thus $\varphi((\Psi_{\T})_{\mathbf{P}}(P_j))=P_{j}$. The second statement of the
theorem follows from the fact that $I_{\M(\T,\LL)}\subset \ker(\Psi_{\T})$ and that the localisation $(\Psi_{\T})_{\mathbf{P}}$ is an isomorphism.
\end{proof}
\begin{corollary}
The ideal of model invariants defines the binary multinomial staged tree model inside the probability simplex, i.e. $\M_{(\T,\LL)}= V(I_{\M(\T,\LL)})\cap \Delta_{n-1}^{\circ}$.
\end{corollary}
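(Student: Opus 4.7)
The plan is to derive the corollary directly from Theorem~\ref{thm:saturation} by a saturation argument that exploits the positivity on $\Delta_{n-1}^\circ$. I will prove the two inclusions of $\M_{(\T,\LL)}= V(I_{\M(\T,\LL)})\cap \Delta_{n-1}^{\circ}$ separately. The inclusion $\M_{(\T,\LL)}\subseteq V(I_{\M(\T,\LL)})\cap \Delta_{n-1}^\circ$ is immediate: every $p=\phi_\T(\theta)$ lies in $V(\ker(\Psi_\T))\cap\Delta_{n-1}^\circ$, and since the preceding lemma gives $I_{\M(\T,\LL)}\subseteq \ker(\Psi_\T)$, every such $p$ vanishes on $I_{\M(\T,\LL)}$ as well.

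For the reverse inclusion I would take $p\in V(I_{\M(\T,\LL)})\cap \Delta_{n-1}^\circ$. Because every coordinate $p_j$ is strictly positive and each $P_{[v]}$ is a sum of certain $p_j$, the evaluation $\mathbf{P}(p)=\prod_{v\in V} P_{[v]}(p)$ is strictly positive. By Theorem~\ref{thm:saturation} we have $(I_{\M(\T,\LL)}:\mathbf{P}^\infty)=\ker(\Psi_\T)$, so for any $f\in \ker(\Psi_\T)$ there exists $N$ with $\mathbf{P}^N f\in I_{\M(\T,\LL)}$; evaluating at $p$ gives $\mathbf{P}(p)^N f(p)=0$, and positivity of $\mathbf{P}(p)$ forces $f(p)=0$. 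Hence $p\in V(\ker(\Psi_\T))\cap \Delta_{n-1}^\circ$, showing $V(I_{\M(\T,\LL)})\cap \Delta_{n-1}^\circ\subseteq V(\ker(\Psi_\T))\cap \Delta_{n-1}^\circ$.

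It remains to produce a preimage of $p$ under $\phi_\T$, i.e.\ to show $V(\ker(\Psi_\T))\cap \Delta_{n-1}^\circ\subseteq \M_{(\T,\LL)}$. I would use the inverse map $\varphi$ constructed in the proof of Theorem~\ref{thm:saturation} to define
\[
\theta_{i_q}:=\varphi(s_{i_q})(p)=\frac{\sum_{|K|=a,\,k_{i_q}\geq 1} k_{i_q}\, P_{[v(K)]}(p)}{a\, P_{[v]}(p)},
\]
for each $\ell\in [m]$ and each $i_q\in I_\ell$, picking any $v\in\widetilde{V}$ with $\mathrm{im}(\LL_v)=f_{\ell,a}$. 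Independence of the choice of $v$ follows from $p\in V(I_{\mathrm{stages}})$; strict positivity of $\theta_{i_q}$ follows from $p>0$; and summing the numerators over $q$ telescopes to $a\cdot\sum_{|K|=a}P_{[v(K)]}(p)=a\,P_{[v]}(p)$, which yields the sum-to-one condition $\sum_{i_q\in I_\ell}\theta_{i_q}=1$. Hence $\theta\in \Theta_\T$, and the identity $\varphi\circ (\Psi_\T)_{\mathbf{P}}=\mathrm{Id}$ established in Theorem~\ref{thm:saturation} forces $\phi_\T(\theta)=p$. The only real subtlety is confirming that $\varphi$ returns statistically valid parameters, and this rests entirely on the non-vanishing of $\mathbf{P}$ on $\Delta_{n-1}^\circ$ together with the combinatorial identities already encoded in $I_{\mathrm{stages}}$ and $I_{\mathrm{vertices}}$.
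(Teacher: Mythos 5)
Your proof is correct and follows essentially the same route as the paper: both arguments rest on the saturation identity $(I_{\M(\T,\LL)}:\mathbf{P}^{\infty})=\ker(\Psi_{\T})$ from Theorem~\ref{thm:saturation} together with the strict positivity of $\mathbf{P}$ on $\Delta_{n-1}^{\circ}$, which forces $V(I_{\M(\T,\LL)})$ and $V(\ker(\Psi_{\T}))$ to agree on the open simplex. The only difference is that you additionally spell out, via the explicit inverse $\varphi$, why $V(\ker(\Psi_{\T}))\cap\Delta_{n-1}^{\circ}\subseteq\M_{(\T,\LL)}$, a step the paper takes for granted from Remark~\ref{rmk:homogenize}; that verification is sound.
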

\begin{proof}
The variety $V(I_{\M(\T,\LL)}: \mathbf{P}^{\infty})$ exactly describes the points in $V(I_{\M(\T,\LL)})$ that are not in   $V(\mathbf{P})$. The latter variety contains the boundary of the simplex, hence  restricting to positive points that add to one, yields
$\M_{(\T,\LL)}= V(I_{\M(\T,\LL)})\cap \Delta_{n-1}^{\circ}$.
\end{proof}

\subsection{Toric binary multinomial staged tree models} \label{sec:toricmultitrees}
It is not true in general the the ideal $\ker(\Psi_{\T})$ of a multinomial staged tree is
toric. For the case of staged trees,
a characterisation of when $\ker(\Psi_{\T})$ is equal to a subideal
generated by binomials is available in \cite{Duarte2020}. The goal of this section is to establish a similar criterion, based on interpolating polynomials from Definition~\ref{def:interpolating}, for multinomial
staged trees. This criterion  will allow us to study the
polyhedral geometry of these models in Section~\ref{sec:toricmultinomialtrees}. 

\begin{definition}
\label{def:balanced}
Let $(\T,\LL)$ be a multinomial staged tree and
let $v,w$ be two vertices in the same stage with $\mathrm{im}(\LL_v)=f_{\ell,a}$ and $\mathrm{im}(\LL_w)=f_{\ell,b}$ for some $\ell \in [m]$.
\begin{itemize}
    \item[(1)] The vertex $v$ is balanced if for all $K^1,K^2,K^3,K^4\in \mathbb{N}^{|I_{\ell}|}$
    with $|K^1|=|K^2|=|K^3|=|K^4|=a$ and $K^1+K^2=K^3+K^4$, the next identity holds in $\R[s_i: i\in I]$ \[
    t(v(K^1))t(v(K^2))=t(v(K^3))t(v(K^4)).\]
    \item[(2)] The pair of vertices $v,w$ is balanced if 
    for all tuples $K,K',Q,Q' \in \mathbb{N}^{|I_{\ell}|}$ with $|K|=|K'|=a$ and $|Q|=|Q'|=b$ with $K+Q'=K'+Q$ the following identity holds in $\R[s_i: i\in I]$
\[t(v(K))\cdot t(w(Q'))=
t(v(K'))\cdot t(w(Q)).\]
\end{itemize}
 The multinomial staged tree $(\T,\LL)$ is \emph{balanced} if every vertex is balanced and 
every pair of vertices in the same stage is balanced.
\end{definition}
\begin{remark}
Condition $(1)$ in
Definition~\ref{def:balanced} is an empty condition for florets of degree one. For staged trees, condition $(2)$
specialises to the definition of balanced stated in
\cite{Ananiadi2020}.
\end{remark}
\begin{remark} \label{rmk:balanced two level}
If all root-to-leaf paths in $(\T,\LL)$ have length $1$, then $(\T,\LL)$ is vacuously balanced. If $(\T,\LL)$ has all 
root-to-leaf paths of length $2$, such as $\T_{a,b,d}$ in Figure~\ref{fig: 2Dmultinomialtree}, it suffices to check that the root is balanced. For the other vertices, the conditions in Definition~\ref{def:balanced} reduce
to the trivial equality $1\cdot 1= 1\cdot 1$.
\end{remark}
\begin{theorem}\label{thm:balancedformultinomialtrees}
Let $(\T,\LL)$ be a binary multinomial staged tree. 
The model $\M_{(\T,\LL)}$ is toric if and only if $(\T,\LL)$ is balanced.
\end{theorem}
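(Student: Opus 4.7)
The plan is to compare the two kernels $\ker(\Psi_\T)$ and $\ker(\Psi_\T^{\mathrm{toric}})$ by analysing what happens to the generators of $I_{\M(\T,\LL)}$ under $\Psi_\T^{\mathrm{toric}}$. The inclusion $\ker(\Psi_\T^{\mathrm{toric}}) \subseteq \ker(\Psi_\T)$ is automatic since $\Psi_\T = \pi \circ \Psi_\T^{\mathrm{toric}}$, so toricness of $\M_{(\T,\LL)}$ is equivalent to the reverse inclusion. The key bridge, which is what I would prove first, is that the balanced condition is precisely equivalent to the vanishing of the images of the generators of $I_{\mathrm{vertices}}$ and $I_{\mathrm{stages}}$ under $\Psi_\T^{\mathrm{toric}}$.

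For the vertex generators, applying Lemma~\ref{lem:evaluation}(2) and factoring out the common prefactor $\prod_{e\in \lambda_{r,v}}\LL(e)^2$, the image of
\begin{equation*}
C_{(K^3,K^4)}P_{[v(K^1)]}P_{[v(K^2)]}-C_{(K^1,K^2)}P_{[v(K^3)]}P_{[v(K^4)]}
\end{equation*}
simplifies, after cancelling the matching binomial coefficients and the monomial $s^{K^1+K^2}=s^{K^3+K^4}$, to exactly $t(v(K^1))t(v(K^2))-t(v(K^3))t(v(K^4))$. This vanishes in $\R[s_i:i\in I]$ iff $v$ is balanced in the sense of Definition~\ref{def:balanced}(1). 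For the stage generators I would use the observation, obtained from Lemma~\ref{lem:evaluation}(1), that $\sum_{|K|=a,\,k_{i_q}\geq 1} k_{i_q}\binom{a}{K} s^K\, t(v(K)) = s_{i_q}\,\partial_{s_{i_q}} t(v)$; then the image of the stage generator under $\Psi_\T^{\mathrm{toric}}$ becomes $s_{i_q}\bigl(b\, t(w)\,\partial_{s_{i_q}}t(v) - a\, t(v)\,\partial_{s_{i_q}}t(w)\bigr)$. Comparing graded pieces in the $s_i$ ($i\in I_\ell$) and using Vandermonde's identity produces, coefficient by coefficient, the balanced-pair equations $t(v(K))t(w(Q'))=t(v(K'))t(w(Q))$ of Definition~\ref{def:balanced}(2), provided one can separate the $t(v(K))$ and $t(w(Q))$ as elements of the UFD $\R[s_i:i\in I]$.

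Granting this equivalence, the $(\Leftarrow)$ direction is immediate: if $(\T,\LL)$ is balanced, then every generator of $I_{\M(\T,\LL)}$ lies in $\ker(\Psi_\T^{\mathrm{toric}})$, so $I_{\M(\T,\LL)} \subseteq \ker(\Psi_\T^{\mathrm{toric}})$. By Theorem~\ref{thm:saturation},
\begin{equation*}
\ker(\Psi_\T) \;=\; \bigl(I_{\M(\T,\LL)}:\mathbf{P}^\infty\bigr) \;\subseteq\; \bigl(\ker(\Psi_\T^{\mathrm{toric}}):\mathbf{P}^\infty\bigr) \;=\; \ker(\Psi_\T^{\mathrm{toric}}),
\end{equation*}
the last equality holding because $\ker(\Psi_\T^{\mathrm{toric}})$ is prime and Lemma~\ref{lem:evaluation}(2) shows $\Psi_\T^{\mathrm{toric}}(P_{[v]})\neq 0$ for every $v$, so $\mathbf{P}\notin\ker(\Psi_\T^{\mathrm{toric}})$. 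For $(\Rightarrow)$, toricness gives $I_{\M(\T,\LL)} \subseteq \ker(\Psi_\T^{\mathrm{toric}})$, and the vertex part of the equivalence above yields at once that every vertex is balanced.

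The main obstacle, and where I would concentrate the technical work, is the pair-balanced half of $(\Rightarrow)$: the vanishing of the stage generator under $\Psi_\T^{\mathrm{toric}}$ gives only one polynomial identity per choice of $(v,w,i_q)$, whereas Definition~\ref{def:balanced}(2) requires an identity for every compatible quadruple $(K,K',Q,Q')$. To close the gap, I would exploit three ingredients simultaneously: the stratification from Remark~\ref{rmk:stratified}, which ensures $t(v(K))$ and $t(w(Q))$ are polynomials in variables disjoint from $S_\ell$ so that the equation of $\Psi_\T^{\mathrm{toric}}$-images splits into one equation per monomial $s_1^{m}s_2^{a+b-1-m}$; the already-established vertex-balanced conditions for $v$ and $w$, which by UFD factorisation force each sequence $(t(v(K)))_K$ and $(t(w(Q)))_Q$ to be a ``binomial-type'' geometric progression $T_k(v)=c_k A^{a-k}B^{k}$; and, crucially, the binary assumption $|I_\ell|=2$, which makes the bookkeeping of the resulting Vandermonde-style coefficients tractable. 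Substituting these factorisations into each graded equation of the stage generator and solving for the remaining ratios between $A,B$ of $v$ and $C,D$ of $w$ should then yield exactly the pair-balanced identity, completing the proof.
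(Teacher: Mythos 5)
Your proposal is correct, but it takes a genuinely different route from the paper's. The paper interposes an auxiliary ideal $J$ whose generators are in bijection with the balanced identities (one quadratic generator per compatible quadruple), proves that $J\subseteq\ker(\Psi_{\T}^{\mathrm{toric}})$ if and only if $(\T,\LL)$ is balanced by factoring out a common prefix of edge labels (their Claim 1), and then shows $I_{\M(\T,\LL)}\subseteq J$ by rewriting each stage generator as a telescoping double sum of $J$-generators — this combinatorial rearrangement is where the paper uses the binary hypothesis. You dispense with $J$ and compare $I_{\M(\T,\LL)}$ with $\ker(\Psi_{\T}^{\mathrm{toric}})$ directly. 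Your forward direction then rests on the identity $\Psi_{\T}^{\mathrm{toric}}\bigl(\sum_{|K|=a,k_{i_q}\geq 1}k_{i_q}P_{[v(K)]}\bigr)=\bigl(\prod_{e\in\lambda_{r,v}}\LL(e)\bigr)s_{i_q}\partial_{s_{i_q}}t(v)$ (valid by Remark~\ref{rmk:stratified}, since $t(v(K))$ involves no symbol of $S_\ell$) together with the Vandermonde cancellation $\sum_{K+Q=M}(bk_{i_q}-aq_{i_q})\binom{a}{K}\binom{b}{Q}=0$; both check out, and notably this half does not even need binarity. The cost is paid in the converse: one stage generator per $(v,w,i_q)$ yields only a single linear relation among the products $t(v(K))t(w(Q))$ in each multidegree, not the whole family of pair-balanced identities, and you correctly isolate this as the crux. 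Your repair works: vertex-balancedness (available from $I_{\mathrm{vertices}}\subseteq\ker(\Psi_{\T}^{\mathrm{toric}})$) gives $t(v(k,a-k))^2=t(v(k-1,a-k+1))t(v(k+1,a-k-1))$, hence in the fraction field of the domain $\R[s_i:i\in I]$ one gets $t(v(k,a-k))=t(v(0,a))\rho_v^{k}$ with $\rho_v=t(v(1,a-1))/t(v(0,a))$ (here binarity is used), so $t(v)=t(v(0,a))(s_{i_1}\rho_v+s_{i_2})^a$, and the stage relation collapses to $\rho_v=\rho_w$, which is exactly pair-balancedness given the geometric-progression form. The saturation endgame via Theorem~\ref{thm:saturation} and primality of $\ker(\Psi_{\T}^{\mathrm{toric}})$ is identical in both arguments. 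In short: the paper's $J$ makes both implications of the balancedness criterion immediate at the price of the Claim 2 rearrangement, while your route trades that for a cleaner, more conceptual forward direction and a fraction-field argument in the converse.
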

\begin{proof}
We  prove that $\ker(\Psi_{\T})=\ker(\Psi_{\T}^{\mathrm{toric}})$
if and only if $(\T,\LL)$ is balanced.
Define the ideal $J$ to be generated by all polynomials of the form
\begin{align*}
C_{(K',Q)}P_{[v(K)]}P_{[w(Q')]}-
C_{(K,Q')}P_{[v(K')]}P_{[w(Q)]},\\ C_{(K^3,K^4)}P_{[v(K^1)]}P_{[v(K^2)]}-C_{(K^1,K^2)}P_{[v(K^3)]}P_{[v(K^4)]}\end{align*}
where $v,w\in V$
are in the same stage and  $K,K',Q,Q'$ obey the condition  $(2)$ and $K^1,K^2,K^3,K^4$ obey the condition $(1)$
in Definition~\ref{def:balanced}.\newline
\noindent\underline{Claim 1:} $ J\subset \ker(\Psi_{\T}^\mathrm{toric})$ if and only if $(\T,\LL)$ 
is balanced. \newline
\noindent By Lemma~\ref{lem:evaluation},
\begin{align}
    \Psi_{\T}^{\mathrm{toric}}(P_{[v(K)]}P_{[w(Q')]}) &= 
    \left(\prod_{e\in \lambda_{r, v(K)}}\LL(e)\right) t(v(K)) \left(\prod_{e\in \lambda_{r, w(Q')}}\LL(e)\right)t(w(Q')) \text{ and }\\
    \Psi_{\T}^{\mathrm{toric}}(P_{[v(K')]}P_{[w(Q)]}) &= \left(\prod_{e\in \lambda_{r, v(K')}}\LL(e)\right) t(v(K')) \left(\prod_{e\in \lambda_{r, w(Q)}}\LL(e)\right)t(w(Q)).
\end{align}
Note that the right hand side of the two equations above share the common factor $\prod_{e\in \lambda_{v,w}}\LL(e)$ where 
$\lambda_{v,w}$ is the set of edges in the path from $v$ to $w$. Thus we extract this factor from the two previous equations and multiply times the labels of the edges $v\to v(K),w\to w(Q)$ and $v\to v(K'), w\to w(Q')$, respectively, to further simplify the two expressions into
\begin{align}
    \Psi_{\T}^{\mathrm{toric}}(P_{[v(K)]}P_{[w(Q')]}) &= 
    \prod_{e\in \lambda_{v, w}}\LL(e)\left({a \choose K}{b \choose Q'} \prod_{i\in I_{\ell}}s_i^{k_i+q_i'}\right)t(v(K))t(w(Q'))\text{ and }\\
    \Psi_{\T}^{\mathrm{toric}}(P_{[v(K')]}P_{[w(Q)]}) &= \prod_{e\in \lambda_{v, w}}\LL(e)\left(
    {a\choose K'}{b\choose Q}\prod_{i\in I_{\ell}}s_{i}^{k_i'+q_i}\right) t(v(K'))t(w(Q))
\end{align}
Finally, since $K+Q'=K'+Q$  and $(\T,\LL)$ is balanced, we obtain
\begin{align*}
    &\Psi_{\T}^{\mathrm{toric}}(C_{(K',Q)}P_{[v(K)]}P_{[w(Q')]}-
C_{(K,Q')}P_{[v(K')]}P_{[w(Q)]}) \\
&\qquad=C_{(K,Q')}C_{(K',Q)}\prod_{e\in \lambda_{v,w}} \LL(e)
\prod_{i\in I_{\ell}} s_{i}^{k_i+q_i'}\left(t(v(K))t(w(Q))- t(v(K'))t(w(Q))\right)\\
&\qquad =0
\end{align*}
A similar calculation shows that 
\begin{align*}
\Psi_{\T}^{\mathrm{toric}}(C_{(K^3,K^4)}P_{[v(K^1)]}P_{[v(K^2)]}-C_{(K^1,K^2)}P_{[v(K^3)]}P_{[v(K^4)]})=0    
\end{align*} if $\T$
is balanced. Conversely, note that if $J\subset\ker(\Psi_{\T}^{\mathrm{toric}})$, tracing these equations  backwards implies that $\T$ must be balanced.
\newline
\noindent\underline{Claim 2:} $I_{\M(\T,\LL)}\subset J$. The ideal
$I_{\M(\T,\LL))}$ is the sum of $I_{\mathrm{stages}}$ and
$I_{\mathrm{vertices}}$. By definition, the generators of 
$I_{\mathrm{vertices}}$ are also generators of $J$. Hence it suffices to show that the generators of $I_{\mathrm{stages}}$ are polynomial combinations of
the generators of $J$. From this point on we further assume that $(\T,\LL)$ is binary. Suppose $v,w$ are in the same stage, where $\mathrm{im}(\LL_v)=f_{\ell,a}$, $\mathrm{im}(\LL_{w})=f_{\ell,b}$ and $|I_{l}|=2$.
There are two equations that hold for this stage,
one for each element in $I_\ell$. We will show that
the equation
\begin{align}\label{eq:stage}
    bP_{[w]}\left(\sum_{k_1=1}^{a}k_1P_{[v(k_1,a-k_1)]}\right)- aP_{[v]}\left( \sum_{k_2=1}^{b}
    k_{2}P_{[w(k_2,b-k_2)]}\right),
\end{align}
which is the equation for the first element in 
$I_\ell$, is a combination the generators of $J$, defined at the beginning. The one for the second element in $I_{\ell}$ follows an analogous argument.
We use the following two identities:
\begin{align*}
    bP_{[w]} & = \sum_{k_2=1}^{b}k_2P_{[w(k_2,b-k_2)]}+ \sum_{k_2=1}^bk_2P_{w[(b-k_2,k_2)]} \text{ and }\\
    aP_{[v]}&  = \sum_{k_1=1}^ak_1P_{[v(k_1,a-k_1)]}+\sum_{k_1=1}^{a}k_1P_{[v(a-k_1,k_1)]}.
\end{align*}
Working from equation~(\ref{eq:stage}), using the identities, we have
\begin{align}
    &\left(\sum_{k_2=1}^{b}k_2P_{[w(k_2,b-k_2)]}+ \sum_{k_2=1}^bk_2P_{[w(b-k_2,k_2)]}\right)\left(\sum_{k_1=1}^{a}k_1P_{[v(k_1,a-k_1)]}\right)  \nonumber \\ 
    &\qquad -\left(\sum_{k_1=1}^ak_1P_{[v(k_1,a-k_1)]}+\sum_{k_1=1}^{a}k_1P_{[v(a-k_1,k_1)]}\right)\left( \sum_{k_2=1}^{b}
    k_2P_{[w(k_2,b-k_2)]}\right)  \nonumber\\
    &=\left(\sum_{k_2=1}^{b}k_2P_{[w(b-k_2,k_2)]}\right)
    \left(\sum_{k_1=1}^{a}k_1P_{[v(k_1,a-k_1)]}\right)\nonumber\\
    &\qquad -
    \left(\sum_{k_1=1}^{a}k_1P_{[v(a-k_1,k_1)]}\right)\left( \sum_{k_2=1}^{b}
    k_2P_{[w(k_2,b-k_2)]}\right)  \nonumber\\
    &=\left( \sum_{k_2=1}^{b}\sum_{k_1=1}^{a}k_2k_1P_{[w(b-k_2,k_2)]}P_{[v(k_1,a-k_1)]}\right)\nonumber\\
    &\qquad - \left(\sum_{k_2=1}^{b}\sum_{k_1=1}^{a}k_1k_2
    P_{[w(k_2,b-k_2)]}P_{[v(a-k_1,k_1)]}\right) \label{eq:vers}  \\
    &=\sum_{k_2=1}^{b}\sum_{k_1=1}^a \Big(k_2k_1P_{[w(b-k_2,k_2)]}P_{[v(k_1,a-k_1)]} \nonumber  \\ 
    &\qquad \minus
    (b\!\minus(k_2\minus1))(a\!\minus(k_1\minus1))P_{[w(b-(k_2-1),k_2-1)]}P_{[v(k_1\minus1,a\minus(k_1-1))]}\Big) \label{eq:vers2}
\end{align}
After rearranging the terms  in (\ref{eq:vers}) we get a single double summation.
 Finally, the $(k_1,k_2)$ summand in (\ref{eq:vers2}) is a multiple of the generator of
$J$, where $Q'=(b-k_2,k_2), K=(k_1,a-k_1), K'=(k_1-1,a-(k_1-1)), Q= (b-(k_2-1),k_2-1)$. 
The generator of $J$ corresponding to this
choice of $K,K',Q,Q'$ is
\begin{equation} \label{eq:Jgen}
{b\choose k_2\!-\!1}{a\choose k_1\!-\!a}P_{[w(b-k_2,k_2)]}P_{[v(k_1,a-k_1)]}-{b\choose k}{a\choose k_1}P_{[w(b-(k_2-1),k_2-1)]}P_{[v(k_1,a-(k_1-1))]}.    
\end{equation}
Note that
$(b-k_2,k_2)+(k_1,a-k_1)=(b-(k_2-1),k_2-1)+(k_1-1,a-(k_1-1))$,
thus $K+Q'=K'+Q$. Multiplying equation (\ref{eq:Jgen}) times $\frac{(b\minus(k_2\minus 1))!(a\minus(k_1\minus 1)!k_1!k_2!)}{a!b!}$ gives the $(k_1,k_2)$ summand in (\ref{eq:vers2}). This implies that  (\ref{eq:vers2}) is
a sum of multiples of the generators in $J$, hence $I_{\mathrm{stages}}\subset J$.

Finally, combining Claim 1 and 2 we conclude that $I_{\M(\T,\LL)}\subset J \subset \ker(\Psi_{\T}^{\mathrm{toric}})\subset \ker(\Psi_{\T})$ if and only if $\T$ is balanced. We now saturate this chain of ideals as in Theorem~\ref{thm:saturation} to obtain $(I_{\M(\T,\LL)}: \mathbf{P}^{\infty})=(\ker(\Psi_{\T})^{\mathrm{toric}}:\mathbf{P}^{\infty}) = (\ker(\Psi_{\T}):\mathbf{P}^{\infty})$. But $(\ker(\Psi_{\T})^{\mathrm{toric}}:\mathbf{P}^{\infty})=\ker(\Psi_{\T})^{\mathrm{toric}}$ and $(\ker(\Psi_{\T}):\mathbf{P}^{\infty})= \ker(\Psi_{\T})$ because they are prime ideals. Hence $\ker(\Psi_{\T}^{\mathrm{toric}})$ and $\ker(\Psi_{\T})$ are equal.
\end{proof}

\subsection{Multinomial staged tree models have rational MLE} \label{subsec:rationalMLE}
In this last section on multinomial staged trees we prove that they  have rational MLE. This fact together with Theorem~\ref{thm:balancedformultinomialtrees} establishes Theorem~\ref{thm:main} and
thus provides
a new class of polytopes that have rational linear precision.
\begin{theorem}\label{rationalMLEmultinomialtrees}
The multinomial staged tree model $\M_{(\T,\LL)}$
has rational MLE $\Phi$. The $j$-th coordinate of $\Phi$ is 
\[
\Phi_{j}(u_{1},\ldots, u_{n}) = c_{j}\prod_{i\in I}\hat{\theta_{i}}^{a_{ij}}\;\;,
\text{ where for  } i\in I_{\ell}, \hat{\theta}_{i}=\frac{\sum_{j\in J}u_{j}a_{ij}}{\sum_{i\in I_{\ell}}(\sum_{j\in J}u_{j}a_{ij})}.
\]
\end{theorem}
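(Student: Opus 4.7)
The plan is to compute the MLE directly, exploiting the fact that the parametrisation $\phi_{\T}$ is a monomial map in the stage parameters, so the log-likelihood decouples across stages.

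First I would write out the log-likelihood of a strictly positive data vector $u = (u_j)_{j\in J}$ as
\begin{align*}
\ell(\theta; u) \;=\; \sum_{j\in J} u_j \log p_j \;=\; \sum_{j\in J} u_j \log c_j \;+\; \sum_{\ell\in[m]} \sum_{i\in I_\ell} N_i \log \theta_i,
\end{align*}
where $N_i := \sum_{j\in J} u_j a_{ij}$. The first term is constant in $\theta$, and the second splits as a sum over stages $\ell\in[m]$. Since the constraints defining $\Theta_{\T}$ also split as $\sum_{i\in I_\ell}\theta_i = 1$ one per stage, the maximisation problem decomposes into $m$ independent subproblems.

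Second, each subproblem is the classical multinomial MLE: maximise $\sum_{i\in I_\ell} N_i \log \theta_i$ subject to $\theta_i>0$ and $\sum_{i\in I_\ell}\theta_i = 1$. By Lagrange multipliers (or strict concavity of $\log$), the unique interior maximiser is $\hat{\theta}_i = N_i / \sum_{i'\in I_\ell} N_{i'}$, which matches the stated formula and is manifestly rational in $u$. Substituting back into $\phi_{\T}$ gives $\Phi_j(u) = c_j \prod_{i\in I} \hat{\theta}_i^{a_{ij}}$, a rational function of $u$, so $\M_{(\T,\LL)}$ has rational MLE.

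Third, I would verify two well-definedness points. (a) Whenever all $u_j > 0$ the denominator $\sum_{i'\in I_\ell} N_{i'}$ is strictly positive: by Definition~\ref{ref: multinomial staged tree} there is at least one vertex $v$ with $\mathrm{im}(\LL_v)=f_{\ell,a}$, and the labelling is a bijection onto the floret, so for each $i'\in I_\ell$ there is some path $j$ with $a_{i'j}>0$, forcing $N_{i'}>0$. (b) The critical point is a genuine global maximum: $\ell$ is strictly concave in $(\log\theta_i)_{i\in I_\ell}$ on each stage-simplex, so there is a unique interior maximiser, and it lies in $\Theta_{\T}$.

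The argument is essentially bookkeeping; the single conceptual step is recognising that the monomial form of $\phi_{\T}$ together with the product structure of $\Theta_{\T}$ reduces the MLE of an arbitrary multinomial staged tree to independent copies of the multinomial MLE, one per stage. There is no real obstacle; the only item requiring a little care is the separation of the sum-to-one constraints stage by stage, which is built into the definition of $\Theta_{\T}$.
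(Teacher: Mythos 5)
Your proof is correct and follows essentially the same route as the paper: the paper factors the likelihood $L(p|u)$ multiplicatively as $C\,L_1\cdots L_m$ with one factor per stage and identifies each $L_\ell$ as the likelihood of a saturated model with counts $\bigl(\sum_{j\in J}u_j a_{ij}\bigr)_{i\in I_\ell}$, which is exactly your additive log-likelihood decomposition. Your extra verifications (positivity of the denominators and strict concavity on each stage simplex) are fine and only make explicit what the paper leaves to the standard multinomial MLE result.
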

\begin{proof}
Let $u=(u_1,\ldots,u_n)$ be a vector of counts. The likelihood
function for $\mathcal{M}_{\T,\LL}$ is
\begin{align*}
    L(p|u)&=\prod_{j\in J}p_j^{u_j}= \prod_{j\in J}\left(\prod_{i\in I}c_j \theta_{i}^{a_{ij}}\right)^{u_j} \\
     &= \prod_{j\in J}\left(\prod_{i\in I}c_{j}^{u_j}\theta_{i}^{a_{ij}u_{j}}\right)
     = \left( \prod_{i\in I} c_1^{u_1}\theta_{i}^{a_{i1}u_1}\right)\cdots
     \left(\prod_{i\in I}c_{n}^{u_n}\theta_{i}^{ a_{in}u_n}\right)\\
     &=\left(\prod_{j\in J}c_j^{u_j}\right)\left(\prod_{i\in I}\theta_{i}^{\sum_{j\in J}u_j a_{ij}}\right) \text{ , let } C= \prod_{j\in J}c_j^{u_j}\\
     &= C\left(\prod_{i\in I_1}\theta_{i}^{\sum_{j\in J}u_{j}a_{ij}}\right) \cdots \left(\prod_{i\in I_k}\theta_{i}^{\sum_{j\in J}u_{j}a_{ij}}\right) 
     = C L_1\cdots L_{k},
\end{align*}
where $L_{1},\ldots,L_{m}$ denote the factors before the last equality in the previous line.
The function $L(p|u)$ is maximised when each factor is maximised.  This is because the parameters
are partitioned by $I_1,\ldots, I_m$ and hence each factor is independent. Thus we find the maximisers of each factor. The function $L_{\ell}$, $\ell \in [m]$, is
the likelihood function of the saturated model $\Delta_{|I_{\ell}|-1}$ with  parameters
$(\theta_{i})_{i\in I_{\ell}}$ and vector of counts
$\left(\sum_{j\in J}u_{j}a_{ij} \right)_{i\in I_{\ell}}.
$
Therefore $\hat{\theta}_{i}=\frac{\sum_{j\in J}u_{j}a_{ij}}{\sum_{i\in I_{\ell}}\sum_{j\in J}u_{j}a_{ij}}$, $i\in I_{\ell}$.
\end{proof}

\begin{corollary}\label{cor: Horn matrix for multinomial trees}
 Let $H_{(\T,\LL)}$ be the
$(|I|+m)\times(|J|)$ matrix with entries
\[ h_{ij}=a_{ij},\;\;\; i\in I,\;\;\; h_{\ell j} = -\sum_{i\in I_{\ell}}a_{ij},\;\;\; \ell \in [m], \text{ and }\lambda_{j}:= (-1)^{\sum_{i\in I}a_{ij}}c_j .\]
Then $(H_{(\T,\LL)},\lambda)$ is a Horn pair for $\M_{(\T,\LL)}$.
\end{corollary}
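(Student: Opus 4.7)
The plan is to verify directly that the Horn parametrisation $\varphi_{(H_{(\T,\LL)},\lambda)}$ restricted to the positive orthant coincides with the rational MLE $\Phi$ established in Theorem~\ref{rationalMLEmultinomialtrees}, and then to check the two defining properties of a Horn pair from the explicit form of $\Phi$.

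First I would check that $H_{(\T,\LL)}$ is a Horn matrix: the sum of the entries in column $j$ equals $\sum_{i\in I}a_{ij}-\sum_{\ell\in[m]}\sum_{i\in I_{\ell}}a_{ij}$, which is zero since $I=\bigsqcup_{\ell\in[m]}I_{\ell}$. Next I would compute $\varphi_{(H,\lambda)}(u)$ explicitly. For $u\in\mathbb{R}^{n}$, the row indexed by $i\in I$ produces $(Hu)_{i}=\sum_{j}a_{ij}u_{j}$, while the row indexed by $\ell\in[m]$ produces $(Hu)_{\ell}=-\sum_{i\in I_{\ell}}\sum_{j}a_{ij}u_{j}$. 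Reading off the $j$-th column of $H_{(\T,\LL)}$ gives
\[
\varphi_{(H,\lambda)}(u)_{j}=\lambda_{j}\prod_{i\in I}(Hu)_{i}^{a_{ij}}\cdot\prod_{\ell\in[m]}(Hu)_{\ell}^{-\sum_{i\in I_{\ell}}a_{ij}}.
\]
Pulling the sign out of each of the $m$ negative rows yields the global factor $(-1)^{-\sum_{i\in I}a_{ij}}=(-1)^{\sum_{i\in I}a_{ij}}$, which is precisely cancelled by the sign built into $\lambda_{j}=(-1)^{\sum_{i\in I}a_{ij}}c_{j}$. Regrouping the remaining factors stage by stage and using the definition of $\hat{\theta}_{i}$ from Theorem~\ref{rationalMLEmultinomialtrees} gives
\[
\varphi_{(H,\lambda)}(u)_{j}=c_{j}\prod_{i\in I}\hat{\theta}_{i}^{\,a_{ij}}=\Phi_{j}(u).
\]

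Finally, I would deduce the Horn pair conditions. Since $\varphi_{(H,\lambda)}$ and $\Phi$ agree on the Zariski dense subset $\mathbb{R}^{n}_{>0}$, they coincide as tuples of rational functions; in particular $\sum_{j\in J}\varphi_{(H,\lambda)}(u)_{j}=\sum_{j\in J}\Phi_{j}(u)=1$ as an identity of rational functions, since $\Phi$ takes values in $\Delta_{n-1}^{\circ}$. Similarly, each $\Phi_{j}$ is strictly positive on $\mathbb{R}^{n}_{>0}$, so $\varphi_{(H,\lambda)}$ is defined and positive on the positive orthant. This verifies both defining conditions of a Horn pair, as required. Given that Theorem~\ref{rationalMLEmultinomialtrees} has already been established, the only real content is the bookkeeping computation above; the sole delicate point to monitor is the sign arising from the $m$ negative rows of $H_{(\T,\LL)}$, which is exactly the reason the factor $(-1)^{\sum_{i\in I}a_{ij}}$ appears in the definition of $\lambda_{j}$.
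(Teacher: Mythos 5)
Your proposal is correct and follows essentially the same route as the paper: both verify directly that the $j$-th coordinate of $\varphi_{(H_{(\T,\LL)},\lambda)}$ equals $\Phi_j$ from Theorem~\ref{rationalMLEmultinomialtrees}, with the sign $(-1)^{\sum_{i\in I}a_{ij}}$ in $\lambda_j$ cancelling the signs coming from the $m$ negative rows. Your additional remarks (checking the column sums vanish and deducing the two Horn pair conditions from the positivity and sum-to-one of $\Phi$) are correct and only make explicit what the paper leaves implicit.
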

\begin{proof}
It suffices to check that the $j$-th coordinate of $\varphi_{(H,\lambda)}$
is equal to $\Phi_{j}$ in Theorem~\ref{rationalMLEmultinomialtrees}. Let $u=(u_1,\ldots,u_n)$, then
\begin{align*}
    (H_{(\T,\LL)}u)^{T}=\left( 
    \sum_{j\in J} a_{1j}u_j,\ldots,\sum_{j\in J}a_{|I|j}u_{j},
    -\sum_{j\in J}( \sum_{i\in I_1} a_{ij})u_{j},\ldots, -\sum_{j\in J}(\sum_{i\in I_{m}}a_{ij})u_{j}
    \right).
\end{align*}
The $j$-th coordinate of $\varphi_{(H_{(\T,\LL)},\lambda)}$ is
\begin{align*}
    \lambda_{j}(H_{(\T,\LL)}u)^{h_{j}}&=  \frac{(-1)^{\sum_{i\in I}a_{ij}}c_{j}\left(\sum_{j\in J}a_{1j} u_j\right)^{a_{1j}}\cdots \left(\sum_{j\in J}a_{|I|j}u_j\right)^{a_{|I|j}}}{
    \left(-\sum_{j\in J}( \sum_{i\in I_1} a_{ij})u_{j}\right)^{\sum_{i\in I_1}a_{ij}}\cdots 
    \left( -\sum_{j\in J} (\sum_{i\in I_m} a_{ij})\right)^{\sum_{i\in I_{k}}a_{ij}}}\\
    &= c_{j}\cdot \frac{\left(\sum_{j\in J}a_{1j} u_j\right)^{a_{1j}}\cdots \left(\sum_{j\in J}a_{|I|j}u_j\right)^{a_{|I|j}}}{
    \left(\sum_{j\in J}( \sum_{i\in I_1} a_{ij})u_{j}\right)^{\sum_{i\in I_1}a_{ij}}\cdots 
    \left( \sum_{j\in J} (\sum_{i\in I_m} a_{ij})\right)^{\sum_{i\in I_{k}}a_{ij}}}\\
    &= c_j \cdot  \left(\frac{\sum_{j\in J}a_{1j}u_{j}}{\sum_{j\in J}(\sum_{i\in I_1}a_{ij})u_j}\right)^{a_{1j}} \cdots 
    \left(\frac{\sum_{j\in J}a_{|I|j}u_{j}}{\sum_{j\in J}(\sum_{i\in I_m}a_{ij})u_j}\right)^{a_{|I|j}}\\ &=c_{j}\prod_{i\in I}\hat{\theta}_{i}^{a_{ij}} = \Phi_{j}.
\end{align*}
\end{proof}

\section{Polytopes arising from  toric multinomial staged trees}
\label{sec:toricmultinomialtrees}
\noindent The aim of this section is to bring together the examples of 2D and 3D polytopes with rational linear precision (Section \ref{sec:2Dand3Dpolytopes}) and multinomial staged trees (Section \ref{multinomialstagedtreemodels}). To this end, we investigate certain properties of the lattice polytopes arising from toric multinomial staged trees. This leads to a better understanding of the negative part of the Horn matrix than that provided by the primitive collections. Recall that $J$ denotes the set of root-to-leaf paths in $\T$. For $j \in J$, $p_j$ is defined to be the product of all edge labels in the path $j$. We denote the stages of $(\T,\LL)$ by $S_1, \ldots, S_m$. Throughout this section $m$ is a positive integer as in Section~\ref{multinomialstagedtreemodels} and $m_j$ ($m$ with a subindex) denotes a lattice point as in Section~\ref{sec:2Dand3Dpolytopes}.

\begin{definition}
The lattice polytope $P_{\T}$ of a balanced multinomial staged tree $(\T,\LL)$ is the convex hull of exponent vectors $a_j$ of $p_j$ for every root-to-leaf path $j$ in $\T$.
\end{definition} 
\noindent Note that $P_{\T}\subset \mathbb{R}^d$ is not a full-dimensional polytope for $d=|S_1|+\dotsm+|S_m|$. This can be observed e.g.\ in Figure \ref{fig:my_2dtrees} (left) for $P_{\T_{b\Delta_2}} \cong b\Delta_2$ (unimodularly equivalent). We call $(\T, \LL)$ a \emph{multinomial staged tree representation} of a full-dimensional polytope $P \cong P_{\T}$.

\subsection{Two dimensional multinomial staged tree models}\label{sec: two dimensional tree models}
The polytopes in 2D from Section~\ref{toricsurfacepatches} admit a multinomial staged tree representation.
\begin{proposition} \label{prop:2Dmultitrees}
All statistical models associated to pairs $(P,w)$ in $2D$ with rational linear precision are toric multinomial staged tree models.
The multinomial staged tree representations for each family in $2D$ are described in
Figure~\ref{fig:my_2dtrees}.
\end{proposition}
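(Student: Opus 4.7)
The plan is threefold. First, I invoke the classification of 2D pairs with rational linear precision from \cite{Bothmer2010}: every such pair is (up to unimodular equivalence) of the form $(T_{a,b,d}, w)$, where the weights are the coefficients of $f_{a,b,d}(s,t) = (1+s)^a\bigl((1+s)^d + t\bigr)^b$. Second, I exhibit the multinomial staged tree from Figure~\ref{fig:my_2dtrees} and identify $\M_{(\T,\LL)}$ with $\M_{\A,w}$ by a change of parameters. Third, I check balancedness and invoke Theorem~\ref{thm:balancedformultinomialtrees} to conclude toricness.

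For $(T_{a,b,d}, w)$ with $b\geq 1$, take $\T = \T_{a,b,d}$ with stages $S_1 = \{s_0, s_1\}$ at the root $r$ and $S_2 = \{s_2, s_3\}$ at the vertices $r(j)$. Unwinding Definition~\ref{ref: multinomial staged tree model}, the root-to-leaf path through $r(j)$ ending at $r(i,j)$ contributes
\[
p_{ij} = \tbinom{b}{j}\tbinom{a+d(b-j)}{i}\theta_0^j \theta_1^{b-j}\theta_2^i \theta_3^{a+d(b-j)-i}.
\]
Under the substitution $\theta_2 = s/(1+s)$, $\theta_3 = 1/(1+s)$, $\theta_0 = t/\bigl((1+s)^d + t\bigr)$, $\theta_1 = (1+s)^d/\bigl((1+s)^d + t\bigr)$, which respects both simplex constraints, the denominators telescope to $f_{a,b,d}(s,t)$, and $p_{ij}$ matches the coordinate of the monomial parametrisation $\overline{w\chi_\A}(s,t)$ of $Y_{\A,w}$ from~(\ref{monomialparametrisation}); hence $\M_{(\T,\LL)} = \M_{\A,w}$. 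For the B\'ezier triangle $b\Delta_2$ I additionally use the compact one-level ternary tree $\T_{b\Delta_2}$, whose edge-label products directly realise the trinomial expansion of $(\theta_0 + \theta_1 + \theta_2)^b$.

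To verify balancedness, I use that both trees have all root-to-leaf paths of equal length, so by Remark~\ref{rmk:balanced two level} it suffices to check condition~(1) of Definition~\ref{def:balanced} at the root. For $\T_{a,b,d}$, the children $r(j)$ of $r$ have only leaves as children, giving $t(r(j)) = (s_2+s_3)^{a+d(b-j)}$. For $K^q = (j_q, b-j_q)$ with $j_1+j_2 = j_3+j_4$, the required identity is
\[
(s_2+s_3)^{2a+d(2b-j_1-j_2)} = (s_2+s_3)^{2a+d(2b-j_3-j_4)},
\]
which is immediate. For $\T_{b\Delta_2}$ the condition is vacuous. Theorem~\ref{thm:balancedformultinomialtrees} then yields toricness in the binary case.

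The one subtlety I anticipate is that $\T_{b\Delta_2}$ has a ternary root, for which Theorem~\ref{thm:balancedformultinomialtrees} is not stated verbatim. I plan to sidestep this by observing that $b\Delta_2 = T_{0,b,1}$ admits the binary representation $\T_{0,b,1}$, to which the theorem applies, so the ternary tree in Figure~\ref{fig:my_2dtrees} is simply a more compact but equivalent depiction of the same toric model. The rest of the argument is a routine bookkeeping exercise; the main care point is ensuring that the reparametrisation simultaneously respects both simplex constraints and matches the denominator $f_{a,b,d}$.
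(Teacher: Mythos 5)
Your proof is correct, and it differs from the paper's in one substantive step: the identification of $\M_{(\T,\LL)}$ with $\M_{\A,w}$. You do this by an explicit reparametrisation $\theta_2=s/(1+s)$, $\theta_3=1/(1+s)$, $\theta_0=t/((1+s)^d+t)$, $\theta_1=(1+s)^d/((1+s)^d+t)$, checking that the denominators collapse to $f_{a,b,d}$ so that $\phi_\T$ composed with this substitution equals the monomial parametrisation~(\ref{monomialparametrisation}); since the substitution is a bijection from $\R^2_{>0}$ onto $\Theta_\T$ and the positive part of $Y_{\A,w}$ is the image of the positive parameters, the two models coincide. The paper instead never writes down a change of parameters: it computes the Horn matrix of $\mathcal{M}_{a,b,d}$ via Corollary~\ref{cor: Horn matrix for multinomial trees}, observes column-by-column that it agrees with the Horn matrix of Proposition~\ref{prop:trapezoid2D}, and concludes by uniqueness of the minimal Horn matrix (and, for $b\Delta_2$, by matching Theorem~\ref{thm:strictlinhorn} with the known Horn pair of the multinomial model). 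Your route is more elementary and self-contained but leans on the standard fact that the positive points of a scaled toric variety are exactly the images of positive parameters; the paper's route stays entirely inside its Horn-matrix machinery and avoids that point. The balancedness verification is identical in both. Your handling of the ternary tree $\T_{b\Delta_2}$ is actually more careful than the paper's: Theorem~\ref{thm:balancedformultinomialtrees} is stated only for binary trees, and passing to the equivalent binary representation $\T_{0,b,1}$ is the right way to secure toricness for $b\Delta_2$, a step the paper leaves implicit.
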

\begin{proof}
For the model $b\Delta_2=T_{0,b,1}$, it suffices to note that the polytope $b\Delta_n$ with weights given by multinomial coefficients has a Horn pair given by Theorem~\ref{thm:strictlinhorn} which is
equal to that one described in \cite[Example 20]{Duarte2020} for multinomial models with $b$ trials
and $n+1$ outcomes. The statistical model for  $T_{a,b,d}$ is the binary multinomial staged tree $\mathcal{M}_{a,b,d}$ in Example~\ref{ex:2dimtrees}, denoted by $\T_{a,b,d}$ in Figure~\ref{fig:my_2dtrees}. The Horn matrix in Proposition~\ref{prop:trapezoid2D},
associated to the model for $T_{a,b,d}$, is equal to the Horn matrix of the model $\mathcal{M}_{a,b,d}$. Firstly,  in both cases the columns are indexed by pairs $(i,j)$ such that
$0\leq j\leq b, 0\leq i\leq a+d(b-j)$ so these matrices have the same number of columns. Using Corollary~\ref{cor: Horn matrix for multinomial trees}, we see that the column corresponding to the outcome $(i,j)$ in $\mathcal{M}_{a,b,d}$ is $(i,j,a+d(b-j)-i,b-j,-(a+d(b-j)),-b)$, which equals the column associated to the lattice point $(i,j)$ in Proposition~\ref{prop:trapezoid2D}.
Uniqueness of the minimal Horn matrix, implies that the model associated to $T_{a,b,d}$ is $\mathcal{M}_{a,b,d}$.
\newline
\noindent It remains to show that $\T_{b \Delta_2}$ and $\T_{a,b,d}$ are balanced. 
By Remark~\ref{rmk:balanced two level}, $\T_{b \Delta_2}$ is balanced
because all root-to-leaf paths have length 1. For $\T_{a,b,d}$, it suffices to prove that the root $r$ is balanced. Following the notation in Definition~\ref{def:balanced}, let $K^1=(j_1,b-j_1)$, $K^2=(j_2,b-j_2)$, $K^3=(j_3,b-j_3)$, and $K_4=(j_4,b-j_4)$ be such that $K^1+K^2=K^3+K^4$.
Then 
\begin{align*}
    t(r(K^1))t(r(K^2))& =(s_2+s_3)^{a+d(b-j_1)}(s_2+s_3)^{a+d(b-j_2)} 
                       = (s_2+s_3)^{2a+2db-d(j_1+j_2)}\\ &= (s_2+s_3)^{2a+2db-d(j_3+j_4)} 
                       =(s_2+s_3)^{a+d(b-j_3)}(s_2+s_3)^{a+d(b-j_4)}\\ &= t(r(K^3))t(r(K^4))
\end{align*}
\end{proof}

\begin{figure} 
     \centering
     \begin{subfigure}[c]{0.45\linewidth}
     \begin{tikzpicture}[thick,scale=0.3]
	

     \node[circle, draw, fill=green!0, inner sep=2pt, minimum width=2pt] (r) at (0,0) {};

 	 \node[circle, draw, fill=green!0, inner sep=2pt, minimum width=2pt] (v1) at (5,7) {};
 	 \node[circle, draw, fill=green!0, inner sep=2pt, minimum width=2pt] (v2) at (5,-7) {};
 	 
 	 \node[circle, draw, fill=green!0, inner sep=2pt, minimum width=2pt] (w1) at (10,12) {};
	 \node (d) at (10,10) {$\vdots$};
 	 \node[circle, draw, fill=green!0, inner sep=2pt, minimum width=2pt] (w2) at (10,7) {};
	 \node (d) at (10,5) {$\vdots$};
 	 \node[circle, draw, fill=green!0, inner sep=2pt, minimum width=2pt] (w3) at (10,2) {};
	 
 	 \node[circle, draw, fill=green!0, inner sep=2pt, minimum width=2pt] (w4) at (10,-2) {};
	 \node (d) at (10,-4) {$\vdots$};
 	 \node[circle, draw, fill=green!0, inner sep=2pt, minimum width=2pt] (w5) at (10,-7) {};
	 \node (d) at (10,-9) {$\vdots$};
 	 \node[circle, draw, fill=green!0, inner sep=2pt, minimum width=2pt] (w6) at (10,-12) {};
 	 
 	 \node[circle, draw, fill=green!0, inner sep=2pt, minimum width=2pt] (w11) at (15,13.5) {};
	 \node (d) at (15,12.15) {$\vdots$};
 	 \node[circle, draw, fill=green!0, inner sep=2pt, minimum width=2pt] (w12) at (15,10.5) {};
 	 \node[circle, draw, fill=green!0, inner sep=2pt, minimum width=2pt] (w21) at (15,8.5) {};
	 \node (d) at (15,7.15) {$\vdots$};
 	 \node[circle, draw, fill=green!0, inner sep=2pt, minimum width=2pt] (w22) at (15,5.5) {};
 	 \node[circle, draw, fill=green!0, inner sep=2pt, minimum width=2pt] (w31) at (15,3.5) {};
 	 \node (d) at (15,2.15) {$\vdots$};
 	 \node[circle, draw, fill=green!0, inner sep=2pt, minimum width=2pt] (w32) at (15,0.5) {};
 	 \node[circle, draw, fill=green!0, inner sep=2pt, minimum width=2pt] (w41) at (15,-0.5) {};
	 \node (d) at (15,-1.85) {$\vdots$};
 	 \node[circle, draw, fill=green!0, inner sep=2pt, minimum width=2pt] (w42) at (15,-3.5) {};
 	 \node[circle, draw, fill=green!0, inner sep=2pt, minimum width=2pt] (w51) at (15,-5.5) {};
	 \node (d) at (15,-6.85) {$\vdots$};
 	 \node[circle, draw, fill=green!0, inner sep=2pt, minimum width=2pt] (w52) at (15,-8.5) {};
 	 \node[circle, draw, fill=green!0, inner sep=2pt, minimum width=2pt] (w61) at (15,-10.5) {};
	 \node (d) at (15,-11.85) {$\vdots$};
 	 \node[circle, draw, fill=green!0, inner sep=2pt, minimum width=2pt] (w62) at (15,-13.5) {};

	 \draw[->]   (r) -- node[midway,sloped,above]{\footnotesize$s_0$}    (v1) ;
 	 \draw[->]   (r) -- node[midway,sloped,below]{\footnotesize$s_1$}  (v2) ;
	 
	 \draw[->]   (v1) -- node[midway,sloped,above]{}    (w1) ;
	 \draw[->]   (v1) -- node[midway,sloped,above]{\footnotesize$\phantom{++}s_2^js_3^{b-j}$}    (w2) ;
	 \draw[->]   (v1) -- node[midway,sloped,above]{}    (w3) ;
	 
 	 \draw[->]   (v2) -- node[midway,sloped,above]{}  (w4) ;
 	 \draw[->]   (v2) -- node[midway,sloped,above]{\footnotesize$\phantom{++}s_2^js_3^{b'-j}$}  (w5) ;
 	 \draw[->]   (v2) -- node[midway,sloped,above]{}  (w6) ;

	 \draw[->]   (w1) -- node[midway,sloped,above]{}    (w11) ;
     \draw[->]   (w1) -- node[midway,sloped,above]{}    (w12) ;
     \draw[->]   (w2) -- node[midway,sloped,above]{\footnotesize$\phantom{+}s_4^{a+d(b-j)}$}    (w21) ;
     \draw[->]   (w2) -- node[midway,sloped,below]{\footnotesize$\phantom{+}s_5^{a+d(b-j)}$}    (w22) ;
     \draw[->]   (w3) -- node[midway,sloped,above]{}    (w31) ;
     \draw[->]   (w3) -- node[midway,sloped,above]{}    (w32) ;
     \draw[->]   (w4) -- node[midway,sloped,above]{}    (w41) ;
     \draw[->]   (w4) -- node[midway,sloped,above]{}    (w42) ;
     \draw[->]   (w5) -- node[midway,sloped,above]{\footnotesize$\phantom{+}s_4^{a'+d(b'-j)}$}    (w51) ;
     \draw[->]   (w5) -- node[midway,sloped,below]{\footnotesize$\phantom{+}s_5^{a'+d(b'-j)}$}    (w52) ;
     \draw[->]   (w6) -- node[midway,sloped,above]{}    (w61) ;
     \draw[->]   (w6) -- node[midway,sloped,above]{}    (w62) ;

	 \node at (0,1.5) {\footnotesize$r$} ;
	 \node at (-1,12) {$\T_{(A)_1}$} ;

\end{tikzpicture}
    \end{subfigure}\qquad
    \begin{subfigure}[c]{0.45\linewidth}
     \begin{tikzpicture}[thick,scale=0.3]
	

     \node[circle, draw, fill=green!0, inner sep=2pt, minimum width=2pt] (r) at (0,0) {};

 	 \node[circle, draw, fill=green!0, inner sep=2pt, minimum width=2pt] (v1) at (5,7) {};
 	 \node[circle, draw, fill=green!0, inner sep=2pt, minimum width=2pt] (v2) at (5,-7) {};
 	 
 	 \node[circle, draw, fill=green!0, inner sep=2pt, minimum width=2pt] (w1) at (10,12) {};
	 \node (d) at (10,9.85) {$\vdots$};
 	 \node[circle, draw, fill=green!0, inner sep=2pt, minimum width=2pt] (w2) at (10,7) {};
	 \node (d) at (10,5) {$\vdots$};
 	 \node[circle, draw, fill=green!0, inner sep=2pt, minimum width=2pt] (w3) at (10,2) {};
	 
 	 \node[circle, draw, fill=green!0, inner sep=2pt, minimum width=2pt] (w4) at (10,-2) {};
	 \node (d) at (10,-4) {$\vdots$};
 	 \node[circle, draw, fill=green!0, inner sep=2pt, minimum width=2pt] (w5) at (10,-7) {};
	 \node (d) at (10,-9) {$\vdots$};
 	 \node[circle, draw, fill=green!0, inner sep=2pt, minimum width=2pt] (w6) at (10,-12) {};
 	 
 	 \node[circle, draw, fill=green!0, inner sep=2pt, minimum width=2pt] (w11) at (15,13.5) {};
	 \node (d) at (15,12.15) {$\vdots$};
 	 \node[circle, draw, fill=green!0, inner sep=2pt, minimum width=2pt] (w12) at (15,10.5) {};
 	 \node[circle, draw, fill=green!0, inner sep=2pt, minimum width=2pt] (w21) at (15,8.5) {};
	 \node (d) at (15,7.15) {$\vdots$};
 	 \node[circle, draw, fill=green!0, inner sep=2pt, minimum width=2pt] (w22) at (15,5.5) {};
 	 \node[circle, draw, fill=green!0, inner sep=2pt, minimum width=2pt] (w31) at (15,3.5) {};
	 \node (d) at (15,2.15) {$\vdots$};
 	 \node[circle, draw, fill=green!0, inner sep=2pt, minimum width=2pt] (w32) at (15,0.5) {};
 	 \node[circle, draw, fill=green!0, inner sep=2pt, minimum width=2pt] (w51) at (15,-5.5) {};
	 \node (d) at (15,-6.85) {$\vdots$};
 	 \node[circle, draw, fill=green!0, inner sep=2pt, minimum width=2pt] (w52) at (15,-8.5) {};
 	 \node[circle, draw, fill=green!0, inner sep=2pt, minimum width=2pt] (w61) at (15,-10.5) {};
	 \node (d) at (15,-11.85) {$\vdots$};
 	 \node[circle, draw, fill=green!0, inner sep=2pt, minimum width=2pt] (w62) at (15,-13.5) {};

	 \draw[->]   (r) -- node[midway,sloped,above]{\footnotesize$s_0$}    (v1) ;
 	 \draw[->]   (r) -- node[midway,sloped,below]{\footnotesize$s_1$}  (v2) ;
	 
	 \draw[->]   (v1) -- node[midway,sloped,above]{}    (w1) ;
	 \draw[->]   (v1) -- node[midway,sloped,above]{\footnotesize$\phantom{++}s_2^js_3^{b-j}$}    (w2) ;
	 \draw[->]   (v1) -- node[midway,sloped,above]{}    (w3) ;
	 
 	 \draw[->]   (v2) -- node[midway,sloped,above]{}  (w4) ;
 	 \draw[->]   (v2) -- node[midway,sloped,above]{\footnotesize$\phantom{++}s_2^js_3^{b'-j}$}  (w5) ;
 	 \draw[->]   (v2) -- node[midway,sloped,above]{}  (w6) ;

	 \draw[->]   (w1) -- node[midway,sloped,above]{}    (w11) ;
     \draw[->]   (w1) -- node[midway,sloped,above]{}    (w12) ;
     \draw[->]   (w2) -- node[midway,sloped,above]{\footnotesize$\phantom{+}s_4^{a+d(b-j)}$}    (w21) ;
     \draw[->]   (w2) -- node[midway,sloped,below]{\footnotesize$\phantom{+}s_5^{a+d(b-j)}$}    (w22) ;
     \draw[->]   (w3) -- node[midway,sloped,above]{}    (w31) ;
     \draw[->]   (w3) -- node[midway,sloped,above]{}    (w32) ;
     \draw[->]   (w5) -- node[midway,sloped,above]{\footnotesize$\phantom{+}s_4^{d(b'-j)}$}    (w51) ;
     \draw[->]   (w5) -- node[midway,sloped,below]{\footnotesize$\phantom{+}s_5^{d(b'-j)}$}    (w52) ;
     \draw[->]   (w6) -- node[midway,sloped,above]{}    (w61) ;
     \draw[->]   (w6) -- node[midway,sloped,above]{}    (w62) ;

	 \node at (0,1.5) {\footnotesize$r$} ;
	 \node at (-1,12) {$\T_{(A)_2}$} ;

\end{tikzpicture}
    \end{subfigure}
\vfill
 \bigskip \bigskip \bigskip
\begin{subfigure}[c]{0.45\linewidth}
     \begin{tikzpicture}[thick,scale=0.3]
	

     \node[circle, draw, fill=green!0, inner sep=2pt, minimum width=2pt] (r) at (0,0) {};

 	 \node[circle, draw, fill=green!0, inner sep=2pt, minimum width=2pt] (v1) at (5,7) {};
 	 \node[circle, draw, fill=green!0, inner sep=2pt, minimum width=2pt] (v2) at (5,-7) {};
 	 
 	 \node[circle, draw, fill=green!0, inner sep=2pt, minimum width=2pt] (w1) at (10,12) {};
	 \node (d) at (10,10) {$\vdots$};
 	 \node[circle, draw, fill=green!0, inner sep=2pt, minimum width=2pt] (w2) at (10,7) {};
	 \node (d) at (10,5) {$\vdots$};
 	 \node[circle, draw, fill=green!0, inner sep=2pt, minimum width=2pt] (w3) at (10,2) {};
	 
 	 \node[circle, draw, fill=green!0, inner sep=2pt, minimum width=2pt] (w4) at (10,-2) {};
	 \node (d) at (10,-4) {$\vdots$};
 	 \node[circle, draw, fill=green!0, inner sep=2pt, minimum width=2pt] (w5) at (10,-7) {};
	 \node (d) at (10,-9) {$\vdots$};
 	 \node[circle, draw, fill=green!0, inner sep=2pt, minimum width=2pt] (w6) at (10,-12) {};
 	 
 	 \node[circle, draw, fill=green!0, inner sep=2pt, minimum width=2pt] (w11) at (15,13.5) {};
	 \node (d) at (15,12.15) {$\vdots$};
 	 \node[circle, draw, fill=green!0, inner sep=2pt, minimum width=2pt] (w12) at (15,10.5) {};
 	 \node[circle, draw, fill=green!0, inner sep=2pt, minimum width=2pt] (w21) at (15,8.5) {};
	 \node (d) at (15,7.15) {$\vdots$};
 	 \node[circle, draw, fill=green!0, inner sep=2pt, minimum width=2pt] (w22) at (15,5.5) {};
  	 \node[circle, draw, fill=green!0, inner sep=2pt, minimum width=2pt] (w31) at (15,3.5) {};
 	 \node (d) at (15,2.15) {$\vdots$};
  	 \node[circle, draw, fill=green!0, inner sep=2pt, minimum width=2pt] (w32) at (15,0.5) {};

	 \draw[->]   (r) -- node[midway,sloped,above]{\footnotesize$s_0$}    (v1) ;
 	 \draw[->]   (r) -- node[midway,sloped,below]{\footnotesize$s_1$}  (v2) ;
	 
	 \draw[->]   (v1) -- node[midway,sloped,above]{}    (w1) ;
	 \draw[->]   (v1) -- node[midway,sloped,above]{\footnotesize$\phantom{++}s_2^js_3^{b-j}$}    (w2) ;
	 \draw[->]   (v1) -- node[midway,sloped,above]{}    (w3) ;
	 
 	 \draw[->]   (v2) -- node[midway,sloped,above]{}  (w4) ;
 	 \draw[->]   (v2) -- node[midway,sloped,above]{\footnotesize$\phantom{++}s_4^{a'-i}s_5^i$}  (w5) ;
 	 \draw[->]   (v2) -- node[midway,sloped,above]{}  (w6) ;

	 \draw[->]   (w1) -- node[midway,sloped,above]{}    (w11) ;
     \draw[->]   (w1) -- node[midway,sloped,above]{}    (w12) ;
     \draw[->]   (w2) -- node[midway,sloped,above]{\footnotesize$\phantom{+}s_4^{a+d(b-j)}$}    (w21) ;
     \draw[->]   (w2) -- node[midway,sloped,below]{\footnotesize$\phantom{+}s_5^{a+d(b-j)}$}    (w22) ;
     \draw[->]   (w3) -- node[midway,sloped,above]{}    (w31) ;
     \draw[->]   (w3) -- node[midway,sloped,above]{}    (w32) ;

	 \node at (0,1.5) {\footnotesize$r$} ;
	 \node at (-1,12) {$\T_{(A)_3}$} ;

\end{tikzpicture}
    \end{subfigure}\qquad 
    \begin{subfigure}[c]{0.45\linewidth}
     \vspace{-1.4cm}
     \begin{tikzpicture}[thick,scale=0.3]
	

     \node[circle, draw, fill=green!0, inner sep=2pt, minimum width=2pt] (r) at (0,0) {};

 	 \node[circle, draw, fill=green!0, inner sep=2pt, minimum width=2pt] (v1) at (5,7) {};
 	 \node[circle, draw, fill=green!0, inner sep=2pt, minimum width=2pt] (v2) at (5,-7) {};
 	 
 	 \node[circle, draw, fill=green!0, inner sep=2pt, minimum width=2pt] (w1) at (10,12) {};
	 \node (d) at (10,10) {$\vdots$};
 	 \node[circle, draw, fill=green!0, inner sep=2pt, minimum width=2pt] (w2) at (10,7) {};
	 \node (d) at (10,5) {$\vdots$};
 	 \node[circle, draw, fill=green!0, inner sep=2pt, minimum width=2pt] (w3) at (10,2) {};

 	 \node[circle, draw, fill=green!0, inner sep=2pt, minimum width=2pt] (w11) at (15,13.5) {};
	 \node (d) at (15,12.15) {$\vdots$};
 	 \node[circle, draw, fill=green!0, inner sep=2pt, minimum width=2pt] (w12) at (15,10.5) {};
 	 \node[circle, draw, fill=green!0, inner sep=2pt, minimum width=2pt] (w21) at (15,8.5) {};
	 \node (d) at (15,7.15) {$\vdots$};
 	 \node[circle, draw, fill=green!0, inner sep=2pt, minimum width=2pt] (w22) at (15,5.5) {};
 	 \node[circle, draw, fill=green!0, inner sep=2pt, minimum width=2pt] (w31) at (15,3.5) {};
	 \node (d) at (15,2.15) {$\vdots$};
 	 \node[circle, draw, fill=green!0, inner sep=2pt, minimum width=2pt] (w32) at (15,0.5) {};

	 \draw[->]   (r) -- node[midway,sloped,above]{\footnotesize$s_0$}    (v1) ;
 	 \draw[->]   (r) -- node[midway,sloped,below]{\footnotesize$s_1$}  (v2) ;
	 
	 \draw[->]   (v1) -- node[midway,sloped,above]{}    (w1) ;
	 \draw[->]   (v1) -- node[midway,sloped,above]{\footnotesize$\phantom{++}s_2^js_3^{b-j}$}    (w2) ;
	 \draw[->]   (v1) -- node[midway,sloped,above]{}    (w3) ;
	 

	 \draw[->]   (w1) -- node[midway,sloped,above]{}    (w11) ;
     \draw[->]   (w1) -- node[midway,sloped,above]{}    (w12) ;
     \draw[->]   (w2) -- node[midway,sloped,above]{\footnotesize$\phantom{+}s_4^{a+d(b-j)}$}    (w21) ;
     \draw[->]   (w2) -- node[midway,sloped,below]{\footnotesize$\phantom{+}s_5^{a+d(b-j)}$}    (w22) ;
     \draw[->]   (w3) -- node[midway,sloped,above]{}    (w31) ;
     \draw[->]   (w3) -- node[midway,sloped,above]{}    (w32) ;

	 \node at (0,1.5) {\footnotesize$r$} ;
	 \node at (-1,12) {$\T_{(A)_4}$} ;

\end{tikzpicture}
    \end{subfigure}
     \caption{Multinomial staged trees in 3D for pairs in Table~\ref{table:3Dtrapezoids}(A).}
     \label{fig: 2Dmultinomialtree}
 \end{figure}
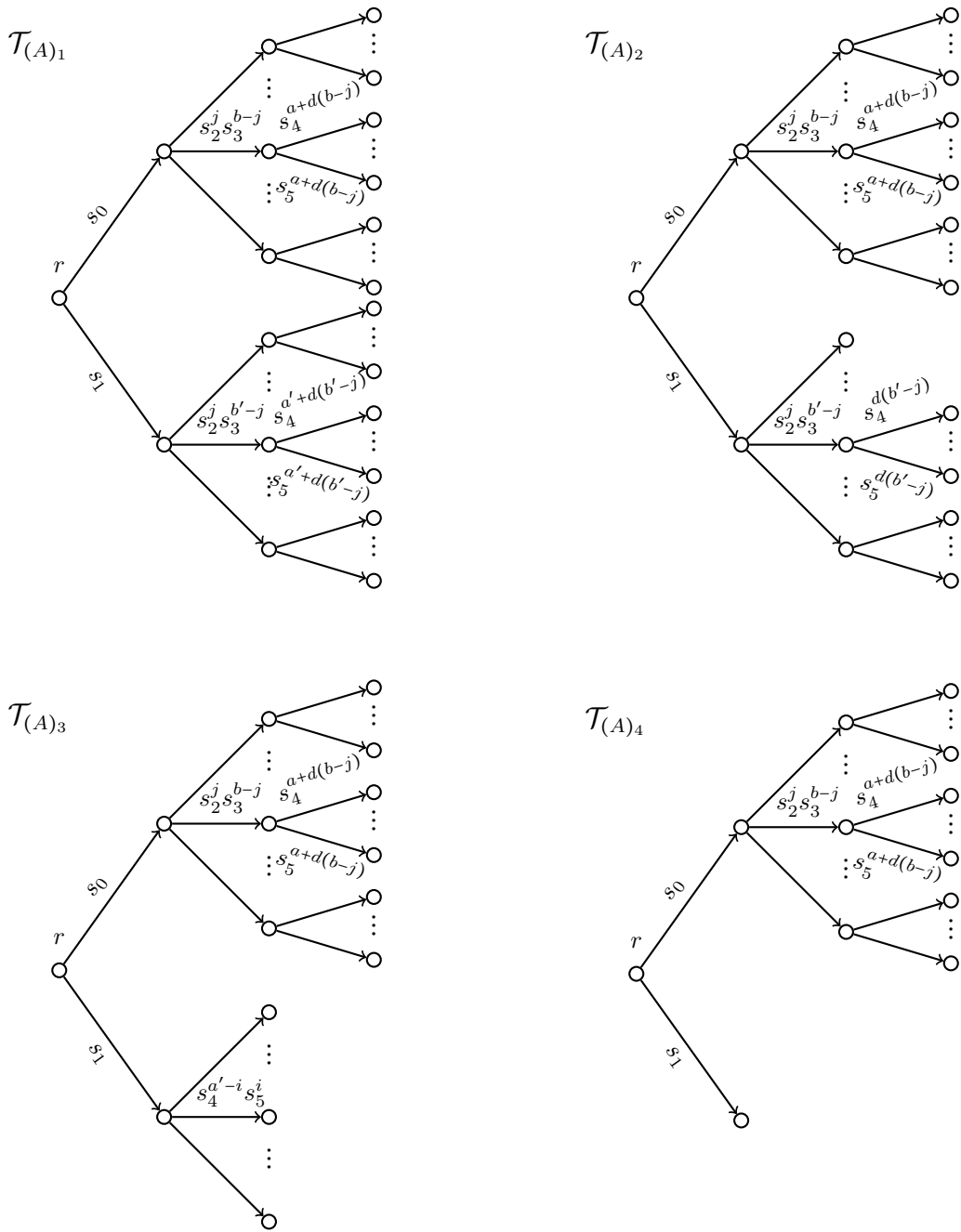








\noindent Note that we obtain $P_{\T_{0,b,1}} \cong b \Delta_2$ i.e.\ we have two different tree representations of $b\Delta_2$: $\T_{b \Delta_2}$ and $\T_{0,b,1}$. For the investigation of the shape of a Horn matrix, we will be interested in those trees $\T$ where the positive part of the Horn matrix $H_{(\T,\LL)}$ from Corollary \ref{cor: Horn matrix for multinomial trees} is the lattice distance matrix of $P_{\T}$ (Definition \ref{def: minimaltreerepresentation})(1). For simple polytopes $P_{\T}$, these trees with an additional property provide us an explanation for the negative part of $H_{(\T,\LL)}$ in terms of primitive collections in Theorem \ref{thm: simplepolytopeprimitivecollections}.  

\subsection{Three dimensional binary multinomial staged tree models}\label{sec: three dimensional multinomial trees}
Before we examine the multinomial staged tree representations more generally, we present the multinomial staged trees for the family $\mathcal{P}$ in Section~\ref{sec:2Dand3Dpolytopes}.
\begin{proposition} \label{prop:3dmultitrees}
All statistical models associated to pairs   in $\mathcal{P}$ are toric binary   multinomial staged trees.
\end{proposition}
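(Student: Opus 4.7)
The plan is to imitate the strategy used for Proposition~\ref{prop:2Dmultitrees}: for each subfamily of prismatoids in Table~\ref{table:3Dtrapezoids}, I will exhibit an explicit binary multinomial staged tree whose associated model coincides with the statistical model of $(P,w)$, and then invoke Theorem~\ref{thm:balancedformultinomialtrees} together with uniqueness of the minimal Horn pair. The construction mirrors the nested factorisation of $f_{\A,w}(\mathbf{t})=(f_{a,b,d}(\mathbf{t})+vf_{a',b',d}(\mathbf{t}))^{l}$, generalising the $2$D trees $\T_{a,b,d}$ and $\T_{b\Delta_2}$ of Figure~\ref{fig:my_2dtrees} by one additional level.

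Concretely, for the generic representative (the trapezoidal frusta) I would use three binary stages $S_1=\{s_0,s_1\}$, $S_2=\{s_2,s_3\}$, $S_3=\{s_4,s_5\}$: the root carries the floret $f_{1,l}$, its children carry florets on $S_2$ of degrees $b$ or $b'$ (according to which of $f_{a,b,d}$, $vf_{a',b',d}$ the root-edge selected), and the bottom level carries florets on $S_3$ of degrees $a+d(b-j)$ or $a'+d(b'-j)$. This is precisely the tree $\T_{(A)_1}$ drawn in Figure~\ref{fig: 2Dmultinomialtree}. The remaining subfamilies in Table~\ref{table:3Dtrapezoids} are obtained as degenerations: when $a'=0$, $b'=0$, $d=0$, or $l=0$, certain floret degrees drop to zero, forcing the corresponding subtrees to collapse; the trees $\T_{(A)_2},\T_{(A)_3},\T_{(A)_4}$ illustrate several such degenerations. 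In each case one reads the tree off from the terms of $f_{\A,w}(\mathbf{t})$.

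Next I would verify that each such tree is balanced in the sense of Definition~\ref{def:balanced}. Lemma~\ref{lem:evaluation} gives an explicit formula for the interpolating polynomial $t(v)=(\sum_{i\in I_\ell}s_i)^{a}$ at each vertex, so the balancing identities at pairs of vertices in the same stage reduce to $(s_i+s_j)^{\alpha}(s_i+s_j)^{\beta}=(s_i+s_j)^{\alpha'}(s_i+s_j)^{\beta'}$ whenever $\alpha+\beta=\alpha'+\beta'$, which holds tautologically. The balancedness at a single vertex is similarly automatic. Theorem~\ref{thm:balancedformultinomialtrees} then guarantees that $\M_{(\T,\LL)}$ is toric.

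Finally, I would identify $\M_{(\T,\LL)}$ with the scaled toric variety attached to $(P,w)$ by comparing Horn pairs. By Corollary~\ref{cor: Horn matrix for multinomial trees}, the Horn matrix $H_{(\T,\LL)}$ has six positive rows (one per symbol in $S_1\cup S_2\cup S_3$) and three negative rows (one per stage), and its columns are indexed by root-to-leaf paths of $\T$. Matching the ordered list of paths with the lattice points $m\in\A$ of $P$, one checks that the resulting columns coincide with the vectors $h(m)=(h_1(m),\ldots,h_6(m),-(h_1+h_4)(m),-(h_2+h_5)(m),-(h_3+h_6)(m))$ produced in Proposition~\ref{thm:bigfamily}, and that the sign-corrected multinomial coefficients read off the tree labels agree with $\lambda_m$. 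Uniqueness (up to permutation) of the minimal Horn matrix then forces the two models to coincide. The main obstacle will be purely combinatorial bookkeeping: for each subfamily in Table~\ref{table:3Dtrapezoids} one must identify the correct degenerated tree, reconcile its paths with the lattice points of the (possibly non-simple) prismatoid, and track the row collapses recorded in Table~\ref{table:minimal} in Appendix~\ref{ap:table}.
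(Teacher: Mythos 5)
Your overall architecture coincides with the paper's: construct the depth-three binary tree $\T_{(A)_1}$ from the nested factorisation of $f_{\A,w}$, match its Horn matrix column-by-column with the one in Proposition~\ref{thm:bigfamily} via Corollary~\ref{cor: Horn matrix for multinomial trees} and uniqueness of the minimal Horn pair, and then verify balancedness so that Theorem~\ref{thm:balancedformultinomialtrees} applies. However, there is a genuine gap in your balancedness argument. You assert that Lemma~\ref{lem:evaluation} gives $t(v)=(\sum_{i\in I_\ell}s_i)^a$ at \emph{each} vertex, so that all the balancing identities of Definition~\ref{def:balanced} become tautologies of the form $(s_i+s_j)^{\alpha}(s_i+s_j)^{\beta}=(s_i+s_j)^{\alpha'}(s_i+s_j)^{\beta'}$. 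That formula for $t(v)$ is only valid when every child of $v$ is a leaf (then $t(v(K))=1$ and the recursion of Lemma~\ref{lem:evaluation}(1) collapses to the multinomial theorem); the identity $\overline{t(v)}=1$ holds only in the quotient by $\mathfrak{q}$, whereas Definition~\ref{def:balanced} demands equality in the polynomial ring $\R[s_i:i\in I]$ itself. For the root of the three-level tree, the relevant quantities $t(r(Q))$ are interpolating polynomials of the \emph{middle-level} vertices, and these are genuinely mixed polynomials in $s_2,s_3,s_4,s_5$: one computes
\[
t\big(r(Q)\big)=(s_4+s_5)^{(a+db)l-((a+db)-(a'+db'))k}\Big(\tfrac{s_2}{(s_4+s_5)^{d}}+s_3\Big)^{bl-(b-b')k},
\]
which is not a power of a single linear form unless $d=0$. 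The balancedness of the root therefore does not reduce to your tautology; it requires deriving this closed form and observing that the product $t(r(Q_1))t(r(Q_2))$ depends only on $k_1+k_2$, which is exactly the computation the paper carries out. (Your argument is fine for the middle-level vertices, whose children's interpolating polynomials are pure powers of $s_4+s_5$, and for the bottom level, which is trivially balanced by Remark~\ref{rmk:balanced two level}.) The conclusion you want is true, but as written the key step is justified by a false identity, so you need to replace it with the explicit computation of $t$ at the root's children.

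Two smaller points: the degenerations are governed by $a'=0$, $b'=0$, $d=0$ (and $b'=b$, $a'=a$), not by $l=0$, since $l>0$ throughout $\mathcal{P}$; and when matching Horn pairs you should note, as the paper does implicitly, that the comparison is made before minimisation and then passed to the minimal Horn matrix, since for several subfamilies the matrix of Proposition~\ref{thm:bigfamily} is not itself minimal.
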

\begin{proof}
We first show that the Horn matrix of the statistical model associated to a general element
in $\mathcal{P}$ is equal to the Horn matrix of a  binary multinomial staged tree.
The general element in $\mathcal{P}$ is a frustum with parameters $a,a',b,b',d,l>0$.
Let $S=\{\{s_0,s_1\},\{s_2,s_3\},\{s_4,s_5\}\}$ be a set of symbols. We define the
labelled tree $(\T,\LL)$ by specifying its set of leaves, its set of root-to-leaf paths,
and the labelling for the edges in each path. The set of leaves in $\T$ is
\begin{align*}
    J=\{(i,j,k)&: 0\leq k \leq l,\;\; 0\leq j \leq bl -(b-b')k, \\
    & 0\leq i \leq (a+db)l-((a+db)-(a'+db')k-dj)\}.
\end{align*}
The labelled root-to-leaf path that ends at leaf $(i,j,k)$ is
$r\to v \to w \to (i,j,k)$, where
\begin{align*}
    \LL(r\to v) &= \tbinom{l}{k}s_0^{k}s_1^{l-k}, \\
    \LL(v\to w) &= \tbinom{b-(b-b')k}{ j} s_2^j s_3^{bl-(b-b')k-j}, \text{ and }\\
    \LL(w\to (i,j,k))&= \tbinom{(a+db)l-((a+db)-(a+db'))k-dj}{ i}s_4^i s_5 ^{(a+db)l-((a+db)-(a+db'))k-dj -i}.
\end{align*}

Thus $p_{(i,j,k)}$ is the product of the labels in this path. A picture of this tree when $l=1$ is contained in Figure~\ref{fig: 2Dmultinomialtree} as $\T_{(A)_1}$. Using Corollary~\ref{cor: Horn matrix for multinomial trees}, applied to $(\T,\LL)$ just defined,
we see that the column corresponding to $(i,j,k)$ in the Horn matrix for $\M_{(\T,\LL)}$ is 
equal to the column of the matrix in Proposition~\ref{thm:bigfamily} evaluated at $(i,j,k)$. \newline
\noindent It remains to prove that $(\T,\LL)$ is balanced. Let us first prove that the root $r$ of $(\mathcal{T},\mathcal{L})$ is balanced. The exponents of the outgoing edges of $r$ can be written as pairs of natural numbers that sum to the degree $\ell$ of the floret. Thus they are pairs of the form $(k,\ell-k).$ Let us consider four such pairs, denoted by $Q_1:=(k_1,\ell -k_1)$, $Q_2:=(k_2,\ell -k_2)$,
$Q_3:=(k_3,\ell -k_3)$,
$Q_4:=(k_4,\ell -k_4)$. Suppose that $Q_1+Q_2=Q_3+Q_4$. Then we have $  k_1+k_2=k_3+k_4.$

We further need to check the following equality:
\begin{equation*}
    t\big (r(Q_1)\big )t\big (r(Q_2)\big )=t\big (r(Q_3)\big )t\big (r(Q_4)\big ).
\end{equation*}
We have: 
\begin{align*}
    t\big (r(Q_1)\big )&=\sum_{j=0}^{b\ell -(b-b')k_1}\bigg [
    \tbinom{b\ell -(b-b')k_1}{ j}s_2^j s_3^{b\ell -(b-b')k_1-j} t\big (v(j,b\ell -(b-b')k_1-j)\big )\bigg ]\\
    &=\sum_{j=0}^{b\ell -(b-b')k_1}\bigg [\tbinom{b\ell -(b-b')k_1}{ j}s_2^j s_3^{b\ell -(b-b')k_1-j} (s_4+s_5)^{(a+db)\ell -((a+db)-(a'+db'))k_1-dj}\bigg ]\\
    &=(s_4+s_5)^{(a+db)\ell -((a+db)-(a'+db'))k_1} \\
    & \qquad \cdot \sum_{j=0}^{b\ell -(b-b')k_1}\bigg [\tbinom{b\ell -(b-b')k_1}{ j}\bigg (\frac{s_2}{(s_4+s_5)^d}\bigg )^j s_3^{b\ell -(b-b')k_1-j} \bigg ]\\
    &=(s_4+s_5)^{(a+db)\ell -((a+db)-(a'+db'))k_1} \bigg ( \frac{s_2}{(s_4+s_5)^d} +s_3\bigg )^{b\ell -(b-b')k_1}.
\end{align*}
We obtain similar formulae for $t\big (r(Q_2)\big )$, $t\big (r(Q_3)\big )$ and $t\big (r(Q_4)\big )$. It follows that \begin{align*}
    t\big (r(Q_1)\big ) t\big (r(Q_2)\big )&=(s_4+s_5)^{2(a+db)\ell -((a+db)-(a'+db'))(k_1+k_2)}  \\
    &\qquad \cdot \bigg( \frac{s_2}{(s_4+s_5)^d} +s_3\bigg )^{2 b\ell -(b-b')(k_1+k_2)}\\
    &=(s_4+s_5)^{2(a+db)\ell -((a+db)-(a'+db'))(k_3+k_4)}\\ &\qquad \cdot \bigg ( \frac{s_2}{(s_4+s_5)^d} +s_3\bigg )^{2 b\ell -(b-b')(k_3+k_4)} \\
    &=t\big (r(Q_3)\big ) t\big (r(Q_4)\big ).
\end{align*}

\noindent A similar argument can be used to prove that the children of the root are balanced vertices. 
Next, let us denote by $v$ such a vertex, whose parent is $r$. By Remark \ref{rmk:balanced two level}, any child of $v$ is trivially balanced. Finally, we prove that all pairs of vertices in the same stage are balanced. There are three stages $S_1=\{s_0,s_1\},S_2=\{s_2,s_3\}$ and $S_3=\{s_4,s_5\}$. Denote by $v$ and $v'$ two children of the root $r$. 
The exponents of the outgoing edges of $v$ can be written as pairs of natural numbers that sum to the degree $b\ell-(b-b')k_1$ of the floret. Thus they are pairs of the form $(j,b\ell-(b-b')k_1-j).$ Let us consider two such pairs, denoted by $Q_1:=(j_1,b\ell-(b-b')k_1-j_1)$, $Q_2:=(j_2,b\ell-(b-b')k_1-j_2)$. Similarly, we consider two children of $v'$ and we denote by 
$Q_3:=(j_3,b\ell-(b-b')k_2-j_3)$,
$Q_4:=(j_4,b\ell-(b-b')k_2-j_4)$. Suppose that $Q_1+Q_4=Q_2+Q_3$. It follows that $j_1+j_4=j_2+j_3$. We want to prove that  
\begin{equation*}
    t\big (v(Q_1)\big )t\big (v(Q_4)\big )=t\big (v(Q_2)\big )t\big (v(Q_3)\big ).
\end{equation*}
We obtain the following equality:
\begin{align*}
    t\big (v(Q_1)\big )=t\big (v(j_1,b\ell-(b-b')k_1-j_1)\big )=(s_4+s_5)^{(a+db)\ell -((a+db)-(a'+db'))k_1-d j_1},
\end{align*}
and its analogues for $t\big (v(Q_2)\big )$, $t\big (v(Q_3)\big )$ and $t\big (v(Q_4)\big )$.
Then 
\begin{align*}
    t\big (v(Q_1)\big ) t\big (v(Q_4)\big )&= (s_4+s_5)^{2(a+db)\ell -((a+db)-(a'+db'))(k_1+k_2)-d (j_1+j_4)}\\
    &=(s_4+s_5)^{2(a+db)\ell -((a+db)-(a'+db'))(k_1+k_2)-d (j_2+j_3)}\\
    &=t\big (v(Q_2)\big ) t\big (v(Q_3)\big ).
\end{align*}
Therefore the pairs of vertices we considered are balanced.
\end{proof}
To obtain the multinomial staged tree representations for the models of the polytopes
in Table~\ref{table:3Dtrapezoids}, we use the tree in the proof of  Proposition~\ref{prop:3dmultitrees} and specialise the values of the parameters $a,a',b,b',d,l$ accordingly. The trees for the family of prismatoids with trapezoidal base in
Table~\ref{table:3Dtrapezoids} (A), with $l=1$, are depicted in Figure~\ref{fig: 2Dmultinomialtree}. The trapezoidal frusta is represented by
$\T_{(A)_1}$, the upper branch is the  model for  $T_{a,b,d}$ and the lower branch is the the model
for $T_{a',b',d}$. The other trees, $\T_{(A)_2}$, $\T_{(A)_3}$ and $\T_{(A)_4}$,  have the same upper branch as $\T_{(A)_1}$.
For the prismatoid with simplex on top, the substitution $a'=0$ has the effect of chopping a floret from  $\T_{(A)_1}$, this gives $\T_{(A)_2}$.
For the trapezoidal wedge, $b'=0$, the edges in $\T_{(A)_1}$ that contain $b'$ contract to a single vertex, yielding $\T_{(A)_3}$. For the trapezoidal pyramid, $a'=b'=0$, we chop off the lower part of the tree after the edge labelled by $s_1$.
The trees for the remaining part  of the Table~\ref{table:3Dtrapezoids}, (B), (C) and (D)  are obtained similarly.

\subsection{Properties of the polytope $P_{\T}$}
In this section we study certain properties of $P_{\T}$ that can be formulated in terms of the combinatorics of its tree $\T$. We start by looking at root-to-leaf paths in $\T$ that represent vertices of
$P_{\T}$, this allows us to work with the normal fan $\Sigma_{P_{\T}}$. For simplicity we assume that $(\T,\LL)$ has a root-to-leaf path of length $m$ where $S_1, \ldots, S_m$ are the stages of $\T$.
\begin{definition}\label{def: vertexrepresenting}
A root-to-leaf path $j$ is \emph{vertex representing} if the exponent vector $a_j$ of $p_j$ is a vertex of $P_{\T}$.
\end{definition}

\begin{lemma}\label{lem: verticesofpolytope}
Let $P_{\T} \subset \mathbb{R}^d$ be a polytope where $(\T,\LL)$ is 
a multinomial staged tree from Proposition~\ref{prop:2Dmultitrees} or Proposition~\ref{prop:3dmultitrees}. Then the 
vertex representing paths in $P_{\T}$ are those for which $p_j$ is divisible by at most one symbol from
each stage.
\end{lemma}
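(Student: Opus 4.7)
The plan is to use the fact that $P_{\T}$ is unimodularly equivalent to a specific polytope $Q$ (a trapezoid/simplex/tensor product in 2D, or one of the prismatoids of Table~\ref{table:3Dtrapezoids} in 3D) whose vertices are explicit, and to transfer the vertex question from $P_{\T}$ to $Q$ via a coordinate projection. The key structural input is that, for each tree in Propositions~\ref{prop:2Dmultitrees} and \ref{prop:3dmultitrees}, the exponent vectors $a_j$ of the root-to-leaf paths satisfy one affine relation per stage, namely $\sum_{i\in I_\ell}x_{s_i}=g_\ell(x)$, where $g_\ell$ is an affine function of the coordinates corresponding to the strictly earlier stages. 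This reflects the affine dependence of floret degrees on earlier choices, which is visible from the explicit formulas in the proofs of those propositions; for $\T_{a,b,d}$ the two relations are $x_{s_0}+x_{s_1}=b$ and $x_{s_2}+x_{s_3}+d\,x_{s_0}=a+db$.

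These $m$ relations cut out an affine subspace of $\R^{|I|}$ of dimension $d$, on which the projection $\pi$ sending $a_j$ to the tuple of coordinates naturally indexing the leaf (e.g.\ $\pi(x)=(x_{s_2},x_{s_0})$ for $\T_{a,b,d}$) restricts to an affine bijection onto $Q$. Since affine bijections preserve vertices, the vertex-representing paths are exactly the preimages under $\pi$ of the vertices of $Q$. For $\T_{b\Delta_2}$ the three vertices $b\,e_i$ of $b\Delta_2$ are the exponents of the pure monomials $s_i^b$. For $\T_{a,b,d}$ the four vertices $(0,0),(a+db,0),(0,b),(a,b)$ of $T_{a,b,d}$ are attained precisely by the four paths with $j\in\{0,b\}$ (pure stage~1) combined with $i\in\{0,a+d(b-j)\}$ (pure stage~2). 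For the 3D trees of Proposition~\ref{prop:3dmultitrees} (Figure~\ref{fig: 2Dmultinomialtree} for family~(A), and analogous trees for~(B),~(C),~(D)), the same enumeration matches each vertex of the prismatoid in Table~\ref{table:3Dtrapezoids} to a pure path of the tree.

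The main obstacle is the case-by-case enumeration in 3D, where several prismatoidal specialisations must be verified separately; in each, however, the check is mechanical, because the pure paths are in bijection with tuples of extremal binary (or, for $\T_{b\Delta_2}$, ternary) choices, one per stage, and these tuples map under $\pi$ to the extreme corners of the target polytope.
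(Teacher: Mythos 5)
Your proposal is correct, but it takes a genuinely different route from the paper's. The paper argues directly on the exponent vectors: for a path $\mathbf{j}$ whose label product picks up two symbols from some stage, it exhibits $a_{\mathbf{j}}$ explicitly as a convex combination of two other exponent vectors (first making the last stage pure, then the middle one), so $a_{\mathbf{j}}$ cannot be a vertex; the converse implication, that pure paths do give vertices, is left implicit. You instead package the combinatorics into the $m$ triangular affine relations $\sum_{i\in I_\ell}x_{s_i}=g_\ell(\text{earlier coordinates})$, deduce that the coordinate projection $\pi$ restricts to an affine isomorphism from $P_{\T}$ onto the known polytope $Q$ (the trapezoid $T_{a,b,d}$, the prismatoid of Figure~\ref{fig:bigfamily}, etc.), and read off both directions of the statement from the known vertex set of $Q$. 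What your approach buys: it actually proves the ``pure $\Rightarrow$ vertex'' half, it handles the dilation parameter $l>1$ (mixing in the first stage) uniformly --- the paper's displayed computation fixes the first-stage exponent to $(1,0)$ and works branch by branch --- and it makes explicit the unimodular equivalence $P_{\T}\cong Q$ that the paper invokes elsewhere. What it costs: you must import the vertex descriptions of each target polytope from Table~\ref{table:3Dtrapezoids} and carry out the extremal-index enumeration separately in each degenerate specialisation (collapsed florets, e.g.\ $a'=0$ or $b'=0$), whereas the paper's convex-combination computation is self-contained. When you do write out that enumeration, take care that florets of degree zero contribute no symbol, so that, for instance, the apex path of the trapezoidal pyramid is counted as ``divisible by at most one symbol from each stage.''
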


\begin{proof}
The upper and lower branches of the tree $\T_{(A)_1}$ are the same up to a choice of parameters, thus we prove it only for the upper branch.  Consider a root-to-leaf path $\mathbf{j}$ in $\T_{(A)_1}$ such that $a_{\mathbf{j}} = (1,0,j,b-j, k,a+d(b-j)-k)$ for $0<j<b$ and $0<k<a+d(b-j)$. Let $\mathbf{j}_1$ and $\mathbf{j}_2$ be two root-to-leaf paths such that $a_{\mathbf{j}_1} = (1,0,j,b-j,a+d(b-j),0)$ and $a_{\mathbf{j}_2} = (1,0,j,b-j,0,a+d(b-j))$. Then we obtain the equality $a_{\mathbf{j}} = \frac{k}{a+d(b-j))} a_{\mathbf{j}_1} + \frac{a+d(b-j)-k}{a+d(b-j))} a_{\mathbf{j}_2}$ and hence $a_{\mathbf{j}}$ cannot be a vertex of $P_{\T_{(A)_1}}$. It remains to show that $a_{\mathbf{j}} = (1,0,j,b-j,a+d(b-j),0))$ is not a vertex. Let now $\mathbf{j}'_1$ and $\mathbf{j}'_2$ be two root-to-leaf paths such that $p_{\mathbf{j}'_1}$ and $p_{\mathbf{j}'_2}$ are divisible by one symbol from each stage and $a_{\mathbf{j}'_1} = (1,0,b,0,a,0)$, $a_{\mathbf{j}'_2} = (1,0,0,b, a+db,0)$. Hence $a_{\mathbf{j}} = \frac{j}{b} a_{\mathbf{j}'_1} + \frac{b-j}{b} a_{\mathbf{j}'_2}$ and $a_{\mathbf{j}}$ is not a vertex of $P_{\T_{(A)_1}}$. The proofs for the remaining trees follow similary. 
\end{proof}

\noindent Next, we investigate the relation between primitive collections, Horn matrices, and stages of multinomial staged trees. The following definition was motivated by the observations on the trees from Section \ref{sec: two dimensional tree models}, \ref{sec: three dimensional multinomial trees} and by an attempt to answer Question \ref{q2}. 
\begin{definition}\label{def: minimaltreerepresentation}
Let $(\T, \LL)$ be a balanced multinomial staged tree representation of $P \cong P_{\T}$. We say that the polytope $P_{\T}$ has property $(\star)$ if,
\begin{enumerate}
    \item The positive part of the Horn matrix $H_{(\T,\LL)}$ from Corollary \ref{cor: Horn matrix for multinomial trees} is the lattice distance matrix of $P_{\T}$.
    \item The vertices of $P_{\T}$ satisfy the conclusion of Lemma \ref{lem: verticesofpolytope}.
\end{enumerate}  \end{definition}
\noindent Note that isomorphic polytopes have the same lattice distance matrices.
\begin{lemma}\label{lem: simpleness of PT}
Let $P_{\T}\subset \mathbb{R}^d$ be a polytope with property $(\star)$. Then $P_{\T}$ is simple if and only if all root-to-leaf paths have the same length $m$ and $|S_1| + \dotsm + |S_{m}| - m = \dim(P_{\T})$.
\end{lemma}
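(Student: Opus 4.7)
The plan is to convert simplicity of $P_\T$ into a statement about the lengths of root-to-leaf paths in $\T$, using property $(\star)$ as a bridge.

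First I would set up the correspondence between zero coordinates of $a_j$ and facets of $P_\T$ through $a_j$. By condition (1) of property $(\star)$ together with Corollary~\ref{cor: Horn matrix for multinomial trees}, the polytope $P_\T$ has exactly $|I|=|S_1|+\dotsm+|S_m|$ facets, indexed by the symbols $i\in I$, and the lattice distance from the point $a_j$ to facet $F_i$ equals $a_{ij}$. Hence the number of facets containing $a_j$ is $|I|$ minus the number of positive entries of $a_j$. For a vertex $v=a_j$, condition (2) of $(\star)$ and Lemma~\ref{lem: verticesofpolytope} imply that $p_j$ has at most one positive-exponent symbol per stage; combined with Remark~\ref{rmk:stratified}, the positive entries of $a_j$ are in bijection with the stages visited along path $j$, so there are exactly $\ell(j)$ of them, where $\ell(j)$ denotes the length of path $j$. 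Consequently $v$ lies on exactly $|I|-\ell(j)$ facets, and since $P_\T$ is simple if and only if every vertex lies on exactly $d=\dim P_\T$ facets, this yields the key equivalence: \emph{$P_\T$ is simple if and only if every vertex representing path has length $|I|-d$.}

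For the direction $(\Leftarrow)$, if all root-to-leaf paths have length $m$ (the number of stages), then by Remark~\ref{rmk:stratified} each path visits every stage exactly once. In particular every vertex arises from a vertex representing path of length $m$, and the assumption $|I|-m=d$ combined with the key equivalence above yields simplicity of $P_\T$.

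For the direction $(\Rightarrow)$, simplicity together with the key equivalence gives that every vertex representing path has length $|I|-d$; the task is to upgrade this to \emph{every} root-to-leaf path having length $m$, where $m$ is the total number of stages. The main obstacle, and where the proof requires the most care, has two parts: (i) arguing that a vertex representing path of length $|I|-d$ must visit all $m$ stages, so that $m=|I|-d$; and (ii) arguing that no root-to-leaf path can have length strictly less than $m$. For (i) the expected argument is that if a vertex representing path $j$ skipped a stage $k$, then $v=a_j$ would lie simultaneously on all $|I_k|$ facets indexed by symbols in $I_k$, and a count of the normal vectors of the $d$ facets meeting at the simple vertex $v$, together with the balancedness of $\T$ (Definition~\ref{def:balanced}) and the interpolating polynomial identities of Lemma~\ref{lem:evaluation}, should rule this out. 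For (ii) one uses that any point of a simple $d$-polytope lies on at most $d$ facets, so the count from the setup paragraph gives $\ell(j)\ge|I|-d=m$ for every root-to-leaf path $j$; combined with the trivial bound $\ell(j)\le m$, equality $\ell(j)=m$ is forced.
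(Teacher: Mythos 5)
Your setup, the key equivalence (``$P_{\T}$ is simple iff every vertex representing path has length $|I|-d$''), and the ($\Leftarrow$) direction are correct and follow the same strategy as the paper, which performs the identical count dually, in terms of the number of ray generators of the maximal cones of $\Sigma_{P_{\T}}$. The ($\Rightarrow$) direction, however, has genuine gaps at exactly the two points you flag. Step (ii) as written fails: the identity ``number of facets through $a_j$ equals $|I|-\ell(j)$'' relies on $p_j$ being divisible by exactly one symbol per visited stage, which is precisely the vertex-representing condition of Lemma~\ref{lem: verticesofpolytope}. For an arbitrary root-to-leaf path a single floret label ${a\choose K}\prod s_i^{k_i}$ can contribute several positive exponents, so the number of positive entries of $a_j$ can strictly exceed $\ell(j)$; the bound ``at most $d$ facets'' then only yields a lower bound on the number of positive entries, not on $\ell(j)$. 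Step (i) is not proved at all --- ``balancedness and the interpolating polynomial identities should rule this out'' is a hope, not an argument, and those are not the relevant tools here.

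The paper closes these gaps differently. For your (ii) it simply records, as an observation about the tree, that all root-to-leaf paths have length $m$ if and only if all \emph{vertex representing} ones do, and works only with the latter. For your (i) it exploits the standing assumption that some root-to-leaf path has length $m$ (the number of stages): if a vertex representing path $j'$ had length $m'<m$, the maximal cone at $a_{j'}$ would have more ray generators than the cone at a length-$m$ vertex, contradicting simpliciality of $\Sigma_{P_{\T}}$. Hence every vertex representing path has length exactly $m$, visits each stage once by Remark~\ref{rmk:stratified}, and the simplicial maximal cone with $|I|-m$ generators forces $d=|I|-m$. To repair your write-up you would need to prove that a vertex representing path of length $|I|-d$ visits all $m$ stages, and to supply an actual reduction from arbitrary root-to-leaf paths to vertex representing ones rather than applying the facet count to non-vertex lattice points.
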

\begin{proof}
Recall that we assumed that there exists a root-to-leaf path of length $m$. All root-to-leaf paths have the same length $m$ if and only if all vertex representing root-to-leaf paths have the same length $m$. Recall also that the vertices of $P_{\T}$ are in one-to-one correspondence with the maximal cones of the normal fan $\Sigma_{P_{\T}}$. First suppose that $P_{\T}$ is simple and there exists a vertex representing root-to-leaf path $j'$ of length $m' <m$. Since the positive part of $H_{(\T,\LL)}$ is the  lattice  distance matrix of $P_{\T}$, the symbols which do not divide $p_{j'}$ represent the facets of $P_{\T}$ which are lattice distance $0$ to $a_{j'}$. Since $P_{\T}$ satisfies the conclusion of Lemma \ref{lem: verticesofpolytope}, the maximal cone associated to the vertex $a_{j'}$ in $\Sigma_{P_{\T}}$ has more 1-face (ray) generators than the one associated to $a_{j}$ where $j$ has length $m$. Thus $\Sigma_{P_{\T}}$ is not simplicial, contradiction. Moreover we obtain that the maximal cone associated to a vertex $a_j$ is generated by the normal vectors associated to $\bigcup_{l=1}^m S_l \backslash s_{i_l}$ for some $s_{i_l} \in S_l$ and where $\prod_{l=1}^m s_{i_l}$ divides $p_j$. This implies that $\dim(P_{\T})=|S_1|+\dotsm+|S_m|-m$. Now suppose that all vertex representing root-to-leaf paths have the same length $m$. Then the number of symbols which do not divide $p_j$ is $|S_1| + \dotsm + |S_{m}| - m$ where $j$ is a vertex representing root-to-leaf path. If this number is equal to $\dim(P_{\T})$, then $P_{\T}$ is simple.
\end{proof}
\noindent Remark that the equality $|S_1|+\dotsm + |S_m| - m = \dim(P_{\T})$ holds for all models from Proposition~\ref{prop:2Dmultitrees} and Proposition~\ref{prop:3dmultitrees}.
\begin{example}\label{ex: vertexrepresentingleaf}
The multinomial staged tree $\T_{(A)_4}$ in Figure~\ref{fig: 2Dmultinomialtree} for the trapezoidal pyramid, does not satisfy Definition \ref{def: minimaltreerepresentation} (1). However when $b=1$, we can find such a balanced multinomial staged tree representation for this polytope, it is shown in Figure~\ref{fig:exVertexLeaf} (left).
This tree $\T$ and $\T_{(A)_4}$ represent the same model because their minimal Horn matrices are equal.
When $a=b=d=1$, the tree and its polytope are in Figure~\ref{fig:exVertexLeaf} (center) and (right).
There are five vertex representing root-to-leaf paths namely 1, 3, 4, 5, and 6 and thus $a_1, a_3, a_4,a_5$ and $a_6$ are the vertices of the trapezoidal pyramid. In particular $a_2 =\frac{1}{2} a_1 + \frac{1}{2} a_3.$ Hence $P_{\T}$ has property $(\star)$. Moreover, $P_{\T}$ is not simple by Lemma \ref{lem: simpleness of PT}, since not all root-to-leaf paths have the length 2.

\end{example}

\begin{figure}[H]
\centering
\begin{tikzpicture}[thick,scale=0.8]
\tikzstyle{every node}=[font=\footnotesize]
\renewcommand{\xx}{1.5}
\renewcommand{\yy}{0.7}

\node[circle, draw, fill=green!0, inner sep=2pt, minimum width=2pt] (d1) at (2*\xx,12.5*\yy) {};
\node[circle, draw, fill=green!0, inner sep=2pt, minimum width=2pt] (c1) at (3*\xx,16*\yy) {};
\node[circle, draw, fill=green!0, inner sep=2pt, minimum width=2pt] (c2) at (3*\xx,9*\yy) {};
\node[circle, draw, fill=green!0, inner sep=2pt, minimum width=2pt] (cc) at (3*\xx,12.5*\yy) {};
\node[circle, draw, fill=green!0, inner sep=2pt, minimum width=2pt] (b1) at (4*\xx,17*\yy+0.23) {};
\node[circle, draw, fill=green!0, inner sep=2pt, minimum width=2pt] (b2) at (4*\xx,15*\yy-0.23) {};
\node (b*) at (4*\xx,16*\yy) {$\vdots$};
\node[circle, draw, fill=green!0, inner sep=2pt, minimum width=2pt] (bb1) at (4*\xx,13.5*\yy+0.23) {};
\node (bb*) at (4*\xx,12.5*\yy) {$\vdots$};
\node[circle, draw, fill=green!0, inner sep=2pt, minimum width=2pt] (bb2) at (4*\xx,11.5*\yy-0.23) {};

\draw[->] (d1) -- node [midway,sloped,above] {$s_0$} (c1);
\draw[->] (d1) -- node [midway,sloped,below] {$s_2$} (c2);
\draw[->] (d1) -- node [midway,sloped,above] {$s_1$} (cc);

\draw[->] (cc) -- node [midway,sloped,above]  {$s_3^{a}$}  (bb1);
\draw[->] (cc) -- node [midway,sloped,below] {$s_4^{a}$} (bb2);

\draw[->] (c1) -- node [midway,sloped,above] {$s_3^{a+d}$} (b1) ;
\draw[->] (c1) -- node [midway,sloped,below] {$s_4^{a+d}$} (b2);
\end{tikzpicture}
\qquad
\begin{tikzpicture}[thick,scale=0.8]
\tikzstyle{every node}=[font=\footnotesize]
\renewcommand{\xx}{1.5}
\renewcommand{\yy}{0.7}

\node[circle, draw, fill=green!0, inner sep=2pt, minimum width=2pt] (d1) at (2*\xx,12.5*\yy) {};
\node[circle, draw, fill=green!0, inner sep=2pt, minimum width=2pt] (c1) at (3*\xx,16*\yy) {};
\node[circle, draw, fill=green!0, inner sep=2pt, minimum width=2pt] (c2) at (3*\xx,9*\yy) {};
\node [right] (c2x) at (3.1*\xx,9*\yy) {\ $6$};
\node[circle, draw, fill=green!0, inner sep=2pt, minimum width=2pt] (cc) at (3*\xx,12.5*\yy) {};
\node[circle, draw, fill=green!0, inner sep=2pt, minimum width=2pt] (b1) at (4*\xx,17*\yy+0.23) {};
\node [right] (b1x) at (4.1*\xx,17*\yy+0.23) {\ $1$};
\node[circle, draw, fill=green!0, inner sep=2pt, minimum width=2pt] (b2) at (4*\xx,15*\yy-0.23) {};
\node [right] (b2x) at (4.1*\xx,15*\yy-0.23) {\ $3$};
\node[circle, draw, fill=green!0, inner sep=2pt, minimum width=2pt] (br) at (4*\xx,16*\yy) {};
\node [right] (b*) at (4.1*\xx,16*\yy) {\ $2$};
\node[circle, draw, fill=green!0, inner sep=2pt, minimum width=2pt] (bb1) at (4*\xx,13.5*\yy+0.23) {};
\node [right] (bb1x) at (4.1*\xx,13.5*\yy+0.23) {\ $4$};
\node[circle, draw, fill=green!0, inner sep=2pt, minimum width=2pt] (bb2) at (4*\xx,11.5*\yy-0.23) {};
\node[right] (bb2x) at (4.1*\xx,11.5*\yy-0.23) {\ $5$};

\draw[->,BrickRed] (d1) -- node [midway,sloped,above] {$s_0$} (c1);
\draw[->] (d1) -- node [midway,sloped,below] {$s_2$} (c2);
\draw[->] (d1) -- node [midway,sloped,above] {$s_1$} (cc);
\draw[->] (cc) -- node [midway,sloped,above]  {$s_3$}  (bb1);
\draw[->] (cc) -- node [midway,sloped,below] {$s_4$} (bb2);
\draw[->] (c1) -- node [midway,sloped,above] {$s_3^{2}$} (b1) ;
\draw[->] (c1) -- node [midway,sloped,below] {$s_4^{2}$} (b2);
\draw[->,BrickRed] (c1) -- node [midway,sloped,above] {$\phantom{++++}2s_3s_4$} (br);
\end{tikzpicture}
\qquad
\begin{tikzpicture}[baseline=1]
\tikzstyle{every node}=[font=\footnotesize]
\begin{scope}[scale=2]
\draw (1.5,0.75) node[draw,circle,inner sep=2pt,fill, label={[xshift=0cm,yshift=-0.8cm]$m_3 = $(2,0,0)}](5){};
\draw (0,0.75) node[draw,circle,inner sep=2pt,fill, label={[xshift=-0.3cm,yshift=-0.8cm]$m_1 = $(0,0,0)}](4){};

\draw (0,1.5) node[draw,circle,inner sep=2pt,fill, label={[xshift=0cm,yshift=0.1cm]$m_6$=(0,0,1)}](6){};
\draw (0.3,1) node[draw,circle,inner sep=2pt,fill, label={[xshift=-1.4cm,yshift=-0.1cm]$m_4 =$(0,1,0)}](1){};
\draw (0.75,1) node[draw,circle,inner sep=2pt,fill, label ={[xshift=1cm,yshift=0cm]$m_5 = $(1,1,0)}](2){};
\draw[-] (1) edge[thick,dashed] (4);
\draw[-] (4) edge[thick] (5);
\draw[-] (2) edge[thick,dashed] (1);
\draw[-] (2) edge[thick,dashed] (5);
\draw[-] (1) edge[thick,dashed] (6);
\draw[-] (2) edge[thick,dashed] (6);
\draw[-] (4) edge[thick] (6);
\draw[-] (5) edge[thick] (6);
\draw (0.75,0.75) node[draw,circle,inner sep=2pt,fill=BrickRed, label={[xshift=-0.2cm,yshift=-0.8cm]$m_2 = $(1,0,0)}](11){};
\end{scope}
 \end{tikzpicture} 
 \caption{Multinomial staged tree representation of the non-simple trapezoidal pyramid, $b=1$.}
 \label{fig:exVertexLeaf}
 \end{figure}
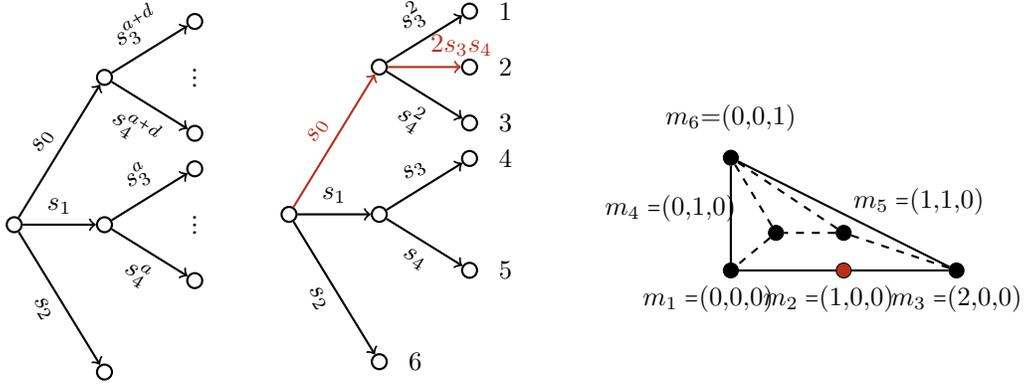
\noindent Furthermore,  the minimal Horn matrix for this example with $b=1$ (below left) coincides with $H_{(\T,\LL)}$. As mentioned also in Section \ref{subsec:nonsimple}, the primitive collections $\{n_1,n_3,n_4\},\{n_2,n_3,n_5\}$ do not offer an explanation for the negative part of the minimal Horn matrix, however the stages $\{s_0,s_1,s_2\}$,$\{s_3,s_4\}$ do.
 \begin{equation*}
\kbordermatrix{
    &m_1 & m_2 & m_3 & m_4 & m_5 & m_6\\
s_0=h_5 & 1 & 1 & 1 & 0 & 0 & 0\\
s_1=h_2 & 0 & 0 & 0 & 1 & 1 & 0\\
s_2=h_3 & 0 & 0 & 0 & 0 & 0 & 1\\
s_3=h_4 & 2 & 1 & 0 & 1 & 0 & 0\\
s_4=h_1 & 0 & 1 & 2 & 0 & 1 & 0\\
-(s_0+s_1+s_2) & -1 & -1 & -1 & -1 & -1 & -1\\
-(s_3+s_4) &-2 & -2 &-2 &-1 &-1  &0
}
\end{equation*}
\begin{equation*}
 \kbordermatrix{
    &m_1 & m_2 & m_3 & m_4 & m_5 & m_6 & m_7 & m_8 & m_9 & m_{10}\\
s_1 = h_3 & 0 & 0 & 0 & 0  & 0 & 0 & 0 & 0 & 0 & 1\\
s_2 = h_2& 0 & 0 & 0 & 0  & 2 & 2 & 1 & 1 & 1 & 0\\
s_3 = h_5 & 2 & 2 & 2 & 2  & 0 & 0 & 1 & 1 & 1 & 0\\
s_4 =h_4 & 3 & 2 & 1 & 0  & 1 & 0 & 2 & 1 & 0 & 0\\
s_5 =h_1 & 0 & 1 & 2 & 3  & 0 & 1 & 0 & 1 & 2 & 0\\
-(s_0 +s_1) & -1 & -1 & -1 & -1  & -1 & -1 & -1 & -1 & -1 & -1\\
s_0-(s_2 + s_3) &-1 & -1 & -1 & -1 & -1 & -1 & -1 & -1 & -1 & 0 \\
(s_4 + s_5) &-3 & -3 & -3 & -3  & -1 & -1 & -3 & -2 & -2 & 0
}
\end{equation*}
\noindent On the other hand we observe that there exists no multinomial staged tree representation for $b=2$, $a=d=1$ fitting Definition \ref{def: minimaltreerepresentation} by looking at the lattice distance matrix seen in the positive part of its minimal Horn matrix (above right). This matrix can also be obtained by applying Corollary \ref{cor: Horn matrix for multinomial trees} to $\T_{(A)_4}$ and performing the row operations explained in \cite[Lemma 3]{Duarte2020} eliminating the row $s_0 = h_6$. This demonstrates how multinomial staged trees provide a wider understanding for the negative part of the Horn matrix. \\

\noindent For simple polytopes $P_{\T}$ with property $(\star)$ we show that the stages coincide with the primitive colections of $\Sigma_{P_{\T}}$.
\begin{theorem}\label{thm: simplepolytopeprimitivecollections}
Let $P_{\T} \subset \mathbb{R}^d$ be a simple polytope with property $(\star)$. Then the primitive collections of the simplicial normal fan $\Sigma_{P_{\T}}$ are represented by the stages $S_1, \ldots, S_m$. 
\end{theorem}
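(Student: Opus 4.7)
The plan is to use property $(\star)$ to identify the rays of $\Sigma_{P_\T}$ with the symbols in $\bigsqcup_\ell S_\ell$, and then read off the primitive collections from the extreme-child structure of vertex-representing paths. By property $(\star)(1)$ the rows of the positive part of $H_{(\T,\LL)}$ indexed by symbols $s_i$ coincide with the lattice distance rows $h_i$, yielding a bijection $s_i \leftrightarrow F_i \leftrightarrow n_i$ such that the exponent $a_{ij}$ of $s_i$ in $p_j$ equals $h_i(m_j)$. The stages $S_1,\ldots,S_m$ therefore partition $\Sigma_{P_\T}(1)$.

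Next I would describe the maximal cones of $\Sigma_{P_\T}$ in tree-theoretic terms. By Lemma~\ref{lem: simpleness of PT} every root-to-leaf path has length $m$, and combining this with property $(\star)(2)$ and Lemma~\ref{lem: verticesofpolytope} shows each vertex-representing path takes an extreme-child edge at every internal vertex. Hence $p_j$ is divisible by exactly one symbol $s_{i(\ell)}$ from each stage $S_\ell$ with positive exponent, and by no other symbols. Consequently $m_j\in F_i$ if and only if $s_i\nmid p_j$, so the maximal cone $\sigma_{m_j}\in\Sigma_{P_\T}$ has ray set $\Sigma_{P_\T}(1)\setminus\{n_{i(1)},\ldots,n_{i(m)}\}$, missing exactly one ray from each stage.

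That each $S_\ell$ is a primitive collection then follows at once: no maximal cone contains $S_\ell$ since every cone misses a ray from every stage; and for any proper subset $S_\ell'\subsetneq S_\ell$ I would pick $s_{i_0}\in S_\ell\setminus S_\ell'$ and construct a vertex-representing path that takes the extreme child labeled by $s_{i_0}^{a}$ at the unique vertex on the path whose floret lies in $S_\ell$, with arbitrary extreme choices elsewhere; the resulting cone contains $S_\ell\setminus\{n_{i_0}\}\supseteq S_\ell'$. For the converse, let $C$ be any primitive collection. If $|C\cap S_\ell|<|S_\ell|$ for every $\ell$, then choosing $s_{i(\ell)}\in S_\ell\setminus C$ for each $\ell$ and building the corresponding vertex-representing path produces a maximal cone whose ray set contains $C$, contradicting primitivity. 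So $C\supseteq S_\ell$ for some $\ell$; and if $C\supsetneq S_\ell$ strictly, then for any $s_j\in C\setminus S_\ell$ the set $C\setminus\{s_j\}\supseteq S_\ell$ would have to lie in some cone by condition~(2) of Definition~\ref{def: primitive collection}, contradicting condition~(1) for $S_\ell$. Hence $C=S_\ell$.

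The main technical point is the construction of vertex-representing paths with prescribed symbol choices per stage. This is where the binary structure of the tree helps: at each internal vertex both extreme children exist, and by Remark~\ref{rmk:stratified} together with simplicity each stage is traversed exactly once along any root-to-leaf path, so the path may be built greedily from the root to a leaf without obstruction.
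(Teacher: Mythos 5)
Your argument is correct and rests on the same two structural facts as the paper's own proof: property $(\star)(1)$ identifies the symbols of the stages with the facet normals, and property $(\star)(2)$ together with Lemma~\ref{lem: simpleness of PT} shows that every maximal cone of $\Sigma_{P_{\T}}$ omits exactly one ray from each stage. Where you diverge is in how the two halves of the primitive-collection condition are closed. The paper verifies that every proper subset $S'_\ell\subsetneq S_\ell$ spans a cone by intersecting maximal cones (using that in a simplicial fan a face is spanned by a subset of the rays), whereas you exhibit an explicit maximal cone containing $S'_\ell$ by greedily building a vertex-representing path that takes the extreme child labelled by a prescribed symbol at each stage; this is arguably more self-contained, and it does not actually require the binary hypothesis you invoke at the end --- the pure-power children $v(ae_q)$ exist in any floret because $\LL_v$ is a bijection onto the full multinomial expansion, so the greedy construction goes through for arbitrary stage sizes. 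You also prove the converse inclusion (every primitive collection equals some $S_\ell$), which the paper's proof asserts without spelling out but which is what Corollary~\ref{cor: primitivecollections and Horn matrix} really uses; your two-step argument --- a collection missing an element of every stage sits inside a maximal cone, and a collection strictly containing some $S_\ell$ would force $S_\ell$ into a cone via condition (2) of Definition~\ref{def: primitive collection} --- is sound.
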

\begin{proof}
By Definition \ref{def: minimaltreerepresentation}(1), the symbols of the stages represent the facets of $P_{\T}$. Let now $j$ be a vertex representing root-to-leaf path. Recall by the proof of Lemma \ref{lem: simpleness of PT}, the maximal cone associated to $a_j$ is generated by the normal vectors (1-faces) associated to $\bigcup_{l=1}^m S_l \backslash s_{i_l}$ for some $s_{i_l} \in S_l$ and where $\prod_{l=1}^m s_{i_l}$ divides $p_j$. Since any intersection of two cones in $\Sigma_{P_{\T}}$ is also a cone in $\Sigma_{P_{\T}}$, we obtain that $\bigcup_{l=1}^m S_l \backslash S'_l$ for all $S'_l \subseteq S_l$ with $|S'_l| \geq 1$ is a cone of $\Sigma_{P_{\T}}$. By Definition \ref{def: primitive collection}, since $\Sigma_{P_{\T}}$ is simplicial, a primitive collection is a set of 1-faces which does not generate a cone itself but any proper subset does. This concludes that the partition $S_1, \ldots. S_m$ are the primitive collections of $\Sigma_{P_{\T}}$. 
\end{proof}
\noindent The following corollary gives an affirmative answer to Question~\ref{q2}.
\begin{corollary}\label{cor: primitivecollections and Horn matrix}
Let $P_{\T} \subset \mathbb{R}^d$ be a simple polytope with property $(\star)$. Then the negative rows are given by the primitive collections of $\Sigma_{P_{\T}}$, i.e.\ $H_{(\T,\LL)}=M_{\A, \Sigma_{P_{\T}}}$.
\end{corollary}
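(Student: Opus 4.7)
The plan is to combine Theorem~\ref{thm: simplepolytopeprimitivecollections} with the explicit description of $H_{(\T,\LL)}$ in Corollary~\ref{cor: Horn matrix for multinomial trees} and the description of $M_{\A,\Sigma_{P_\T}}$ in Definition~\ref{def: matrix with ld and primitive}. There should be no hard obstacle: once the stages have been identified with primitive collections, the two matrices are built from the same data in the same way.

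First, I would compare the positive parts of the two matrices. By property~$(\star)$(1), the positive part of $H_{(\T,\LL)}$ is, by definition of property~$(\star)$, the lattice distance matrix of $\A$, which is also the positive part of $M_{\A,\Sigma_{P_\T}}$ by Definition~\ref{def: matrix with ld and primitive}. To make the comparison row by row, I would use the bijection between symbols $s_i$ (with $i\in I$) and facet normals $n_i$ coming from property~$(\star)$(1): the row of $H_{(\T,\LL)}$ indexed by $s_i$ equals the row $(a_{ij})_{j\in J}$, which is exactly the row $(h_i(m_j))_{j}$ of the lattice distance matrix of $\A$.

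Next, I would match the negative rows. By Corollary~\ref{cor: Horn matrix for multinomial trees}, the negative row of $H_{(\T,\LL)}$ associated to the stage $S_\ell$ has $j$-th entry $-\sum_{i\in I_\ell}a_{ij}$, i.e.\ it is the negated sum of the positive rows indexed by the symbols of $S_\ell$. By Theorem~\ref{thm: simplepolytopeprimitivecollections}, under the bijection $s_i\leftrightarrow n_i$ the stages $S_1,\dots,S_m$ are exactly the primitive collections of $\Sigma_{P_\T}$. Thus the negated sum $-\sum_{i\in I_\ell}h_i$ is precisely the negative row of $M_{\A,\Sigma_{P_\T}}$ associated to the primitive collection corresponding to $S_\ell$ (Definition~\ref{def: matrix with ld and primitive}).

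Assembling positive and negative parts, $H_{(\T,\LL)}$ and $M_{\A,\Sigma_{P_\T}}$ have the same rows, indexed by the same data, hence they are equal. The step that required real content was already done in Theorem~\ref{thm: simplepolytopeprimitivecollections}; the corollary is just a bookkeeping statement that unpacks that identification at the level of the two matrices.
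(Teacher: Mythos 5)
Your proposal is correct and follows exactly the route the paper takes: the paper's proof of Corollary~\ref{cor: primitivecollections and Horn matrix} simply cites Corollary~\ref{cor: Horn matrix for multinomial trees} and Theorem~\ref{thm: simplepolytopeprimitivecollections}, and your argument is the natural unpacking of that citation, matching the positive parts via property $(\star)(1)$ and the negative rows via the identification of stages with primitive collections.
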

\begin{proof}
It follows from Corollary \ref{cor: Horn matrix for multinomial trees} and Theorem \ref{thm: simplepolytopeprimitivecollections}.
\end{proof}

\begin{example}\label{ex: 2D minimal trees}

The multinomial staged trees $\T_{b\Delta_2}$ and $\T_{a,b,d}$ satisfy Definition \ref{def: minimaltreerepresentation}(1) for the simplex and trapezoid ($a,b,d>0$) respectively. That means the facets of the polytopes are in one-to-one correspondence with the symbols in the stages. Moreover $P_{\T_{b\Delta_2}}$ and $P_{\T_{a,b,d}}$ are simple polytopes. Hence by Theorem \ref{thm: simplepolytopeprimitivecollections} we obtain that the primitive collections are given by the partition of the stages. For the simplex $P_{\T_{b\Delta_2}} \cong a\Delta_{2}$ we have only one primitive collection $\{s_0,s_1,s_2\}$. Similarly, for $P_{\T_{a,b,d}}\cong T_{a,b,d}$ we have the partition of the stages as $\{s_0,s_1\}$ and $\{s_2,s_3\}$, which correspond exactly to the  primitive collections obtained in Theorem \ref{thm: 2D Horn matrix}.  
\end{example}

\begin{figure}[H]
\centering
\begin{tikzpicture}[scale=1,thick]
\tikzstyle{every node}=[font=\footnotesize]
\renewcommand{\xx}{1.1}
\renewcommand{\yy}{0.4}

\node[circle, draw, fill=green!0, inner sep=2pt, minimum width=2pt]  (d1) at (2*\xx,12.5*\yy) {};
\node[circle, draw, fill=green!0, inner sep=2pt, minimum width=2pt]  (c1) at (3*\xx,16*\yy) {};
\node[circle, draw, fill=green!0, inner sep=2pt, minimum width=2pt]  (c2) at (3*\xx,9*\yy) {};

\node[circle, draw, fill=green!0, inner sep=2pt, minimum width=2pt]  (cc) at (3*\xx,12.5*\yy) {};

\node[circle, draw, fill=green!0, inner sep=2pt, minimum width=2pt]  (b1) at (4*\xx,17*\yy) {};
\node [right] (b1x) at (4.1*\xx,17*\yy) {\ $1$};
\node[circle, draw, fill=green!0, inner sep=2pt, minimum width=2pt]  (b2) at (4*\xx,15*\yy) {};
\node [right] (b2x) at (4.1*\xx,15*\yy) {\ $2$};
\node (b*) at (4*\xx,16*\yy) {$\vdots$};

\node[circle, draw, fill=green!0, inner sep=2pt, minimum width=2pt]  (b3) at (4*\xx,10*\yy) {};
\node [right] (b3x) at (4.1*\xx,10*\yy) {\ $5$};
\node[circle, draw, fill=green!0, inner sep=2pt, minimum width=2pt]  (b4) at (4*\xx,8*\yy) {};
\node [right] (b4x) at (4.1*\xx,8*\yy) {\ $6$};
\node (b**) at (4*\xx,9*\yy) {$\vdots$};

\draw[->] (c2) -- node [midway,sloped,above] {$s_3^{a'}$}  (b3);
\draw[->] (c2) -- node [midway,sloped,below]  {$s_4^{a'}$} (b4);

\node[circle, draw, fill=green!0, inner sep=2pt, minimum width=2pt]  (bb1) at (4*\xx,13.5*\yy) {};
\node [right] (bb1x) at (4.1*\xx,13.5*\yy) {\ $3$};

\node (bb*) at (4*\xx,12.5*\yy) {$\vdots$};
\node[circle, draw, fill=green!0, inner sep=2pt, minimum width=2pt]  (bb2) at (4*\xx,11.5*\yy) {};
\node [right] (bb2x) at (4.1*\xx,11.5*\yy) {\ $4$};

\draw[->] (d1) -- node [midway,sloped,above]  {$s_0$} (c1);
\draw[->] (d1) -- node [midway,sloped,below]  {$s_2$} (c2);
\draw[->] (d1) -- node [midway,sloped,above]  {$s_1$} (cc);

\draw[->] (cc) -- node [midway,sloped,above]  {$s_3^{a}$}  (bb1);
\draw[->] (cc) -- node [midway,sloped,below] {$s_4^{a}$} (bb2);

\draw[->] (c1) -- node [midway,sloped,above]  {$s_3^{a+d}$} (b1) ;
\draw[->] (c1) -- node [midway,sloped,below]  {$s_4^{a+d}$} (b2);
\end{tikzpicture}
\qquad
\begin{tikzpicture}[scale=3.5]
\tikzstyle{every node}=[font=\footnotesize]
\draw (1.2,0.75) node[draw,circle,inner sep=2pt,fill](5){};
\node [right] (m2) at (1.2,0.75) {$m_2 =(a+d,0,0)$};

\draw (0,0.75) node[draw,circle,inner sep=2pt,fill](4){};
\node [left] (m1) at  (0,0.75) {$m_1 = (0,0,0)$};

\draw (0,1.35) node[draw,circle,inner sep=2pt,fill](6){};
\node [left] (m5) at  (0,1.35) {$m_5 = (0,0,1)$};

\draw (0.3,1) node[draw,circle,inner sep=2pt,fill](1){};
\node [left] (m3) at  (0,1) {$m_3 = (0,1,0)$};

\draw (0.75,1) node[draw,circle,inner sep=2pt,fill](2){};
\node [right] (m6) at  (1,1) {$m_4 = (a,1,0)$};

\draw (0.6,1.35) node[draw,circle,inner sep=2pt,fill](7){};
\node [right] (m6) at  (0.6,1.35) {$m_6 = (a',0,1)$};

  \draw[-] (1) edge[thick,dashed] (4);
  \draw[-] (4) edge[thick] (5);
  \draw[-] (2) edge[thick,dashed] (1);
  \draw[-] (2) edge[thick,dashed] (5);
  \draw[-] (1) edge[thick,dashed] (6);
  \draw[-] (2) edge[thick,dashed] (7);
  \draw[-] (6) edge[thick] (7);
  \draw[-] (4) edge[thick] (6);
  \draw[-] (5) edge[thick] (7);
  \end{tikzpicture}
 \captionof{figure}{Multinomial staged tree representation of the simple trapezoidal wedge with $b=1$ considered in
 Example~\ref{ex: simple polytope minimal tree primitive collections}.
 }\label{fig: minimal tree for trapezoid with a line segment}
 \end{figure}
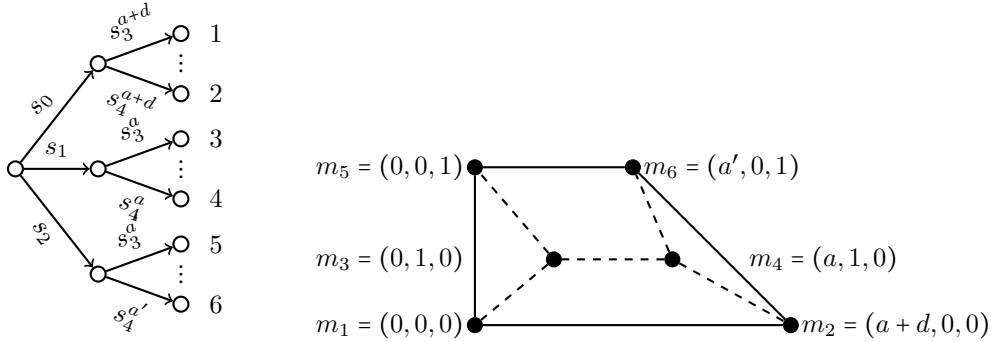
\begin{example}\label{ex: simple polytope minimal tree primitive collections}
Let us consider the balanced multinomial staged tree representation, satisfying Definition \ref{def: minimaltreerepresentation}, of the trapezoidal wedge from Table \ref{table:3Dtrapezoids} (A) with $b=1$ seen in Figure \ref{fig: minimal tree for trapezoid with a line segment}.
This tree representation encodes the same model as the tree $\T_{(A)_3}$ in Figure~\ref{fig: 2Dmultinomialtree}, because they have the same minimal Horn matrix. In particular we observe by Lemma \ref{lem: simpleness of PT} that $P_{\T}$ is simple. By Theorem \ref{thm: simplepolytopeprimitivecollections}, the primitive collections are represented by the partition of the stages: $\{n_2,n_3,n_5\} = \{s_0, s_1, s_2\}$ and $\{n_1,n_4\}= \{s_3,s_4\}$. By Corollary \ref{cor: primitivecollections and Horn matrix} the negative part of the minimal Horn matrix $a=b=d=a'=1$ (top), is explained by primitive collections. In Section~\ref{section:fewerfacets} we saw that even for simple polytopes, the negative part of the Horn matrix is not always explained by the primitive collections. 
\noindent From the perspective of staged trees,  for $a'=a=d=1,\,b=2$, this polytope can be represented by
$\T_{(A)_3}$. After minimising the Horn matrix  $H_{(\T_{(A)_3},\LL_{(A)_3})}$, constructed by Corollary \ref{cor: Horn matrix for multinomial trees}, we obtain the matrix (bottom) whose positive part is the lattice distance matrix of $P_{\T_{(A)_3}}$ (see Table \ref{table:minimal}). However a simple computation shows that there exists no tree $(\T,\LL)$ such that the positive part of $H_{(\T,\LL)}$ is the lattice distance matrix of $P_{\T_{(A)_3}}$ i.e.\ satisfying Definition \ref{def: minimaltreerepresentation} (1).
\begin{equation*}
\kbordermatrix{
    &m_1 & m_2 & m_3 & m_4 & m_5 & m_6 & m_7 \\
h_5=s_0 & 1 & 1 & 0 & 0 & 0 & 0 & 1\\
h_2=s_1 & 0 & 0 & 1 & 1 & 0 & 0 & 0\\
h_3=s_2 & 0 & 0 & 0 & 0 & 1 & 1 & 0\\
h_4=s_3 & 2 & 0 & 1 & 0 & 1 & 0 & 1\\
h_1=s_4 & 0 & 2 & 0 & 1 & 0 & 1 & 1\\
-(s_0+s_1+s_2) & -1 & -1 & -1 & -1 & -1 & -1 & -1\\
-(s_3+s_4) & -2 & -2 & -1 &-1 & -1  &-1 & -2 }
\end{equation*}
\begin{equation*}
  \kbordermatrix{
    &m_1 & m_2 & m_3 & m_4 & m_5 & m_6 & m_7 & m_8 & m_9 & m_{10} & m_{11}\\
h_3=s_1 & 0 & 0 & 0 & 0 & 0 & 0 & 0 & 0 & 0 & 1& 1\\
h_5=s_2 & 2 & 2 & 2 & 2 & 1 & 1 & 1 & 0 & 0 & 0 & 0\\
h_2=s_3 & 0 & 0 & 0 & 0 & 1 & 1 & 1 & 2 & 2 & 0 & 0\\
h_4=s_4 & 3 & 2 & 1 & 0 & 2 & 1 & 0 & 1 & 0 & 1 & 0\\
h_1=s_5 & 0 & 1 & 2 & 3 & 0 & 1 & 2 & 0 & 1 & 0 & 1\\
-s_0 -s_1 & -1 & -1 & -1 & -1 & -1 & -1 & -1 & -1 & -1 & -1 & -1\\
s_0-s_2 - s_3 &-1 & -1 & -1 & -1 & -1 & -1 & -1 & -1 & -1 & 0 & 0 \\
-s_4 - s_5 &-3 & -3 & -3 & -3 & -2 & -2 & -2 & -1 & -1 & -1 & -1
}
\end{equation*}
\end{example}

\section*{Acknowledgements}
Eliana Duarte was supported by the Deutsche Forschungsgemeinschaft DFG under grant 314838170, GRK 2297 MathCoRe,  by the FCT grant 2020.01933.CEECIND, and partially supported by CMUP under the FCT grant UIDB/00144/2020. 
Computations using the software system Sagemath \cite{sagemath} and Macaulay2 \cite{M2} were crucial for the development of this paper. We thank Bernd Sturmfels for introducing us to each other and for encouraging us to work on this project.

\bibliographystyle{amsplain}      

\bibliography{refs}   

\noindent
\footnotesize {\bf Authors' addresses:}

\noindent Isobel Davies \\
Otto-von-Guericke Universit\"at Magdeburg, Universit\"atsplatz 2, 39106 Magdeburg, Germany.\\
\hfill {\tt isobel.davies@ovgu.de}
\vspace{\baselineskip}

\noindent Eliana Duarte \\
Universidade do Porto, Rua do Campo Alegre 687, 4169-007 Porto, Portugal.\\
\hfill {\tt eliana.gelvez@fc.up.pt}
\vspace{\baselineskip}

\noindent Irem Portakal \\
Technische Universität München, Lehrstuhl für Mathematische Statistik 85748 Garching b. München, Boltzmannstr. 3.,  Germany.\\
\hfill {\tt 
mail@irem-portakal.de}
\vspace{\baselineskip}

\noindent Miruna-\c Stefana Sorea\\
SISSA, via Bonomea, 265 - 34136 Trieste, Italy and RCMA Lucian Blaga University Sibiu, Bd-ul Victoriei nr.10, Sibiu, 550024,  Romania.\\
\hfill {\tt msorea@sissa.it}

\vskip5cm

\appendix
\section{Appendix A} \label{ap:table}
\begin{table}[H]
\begin{center}
\begin{tabular}{ |m{6.5cm}|m{10cm}|}
\hline
\textbf{Name of subfamily}&\textbf{Columns of minimal Horn matrix}\\ \hline
 \multicolumn{2}{|c|}{\textbf{ (A) Prismatoids with trapezoidal base} $a>0,b>0,d>0,l>0$} \\ \hline
 Trapezoidal frusta&$(h_1,h_2,h_3,h_4,h_5,h_6,-(h_1+h_4),-(h_2+h_5),-(h_3+h_6))$\\ \hline
  Triangle top&$(h_1,h_2,h_3,h_4,h_5,h_6,-(h_1+h_4),-(h_2+h_5),-(h_3+h_6))$\\ \hline
  Trapezoidal wedges with $b\neq 1$&$(h_1,h_2,h_3,h_4,h_5,-(h_1+h_4),-(h_2+h_5-h_6),-(h_3+h_6))$\\ \hline
  Trapezoidal wedges with $b=1$&$(h_1,h_2,h_3,h_4,h_5,-(h_1+h_4),-(h_2+h_3+h_5))$ \\ \hline
  Trapezoidal pyramids with $b\neq 1$&$(h_1,h_2,h_3,h_4,h_5,-(h_1+h_4),-(h_2+h_5-h_6),-(h_3+h_6))$\\ \hline
  Trapezoidal pyramids with $b=1$&$(h_1,h_2,h_3,h_4,h_5,-(h_1+h_4),-(h_2+h_3+h_5))$ \\ \hline
 \multicolumn{2}{|c|}{\textbf{(B) Prismatoids with tensor product base $a>0$, $b>0$, $d=0$, $l>0$} }\\ \hline
 General tensor product frusta&$(h_1,h_2,h_3,h_4,h_5,h_6,-(h_1+h_4),-(h_2+h_5),-(h_3+h_6))$\\ \hline
 Tensor product frusta with $a'=a$&$(h_1,h_2,h_3,h_4,h_5,h_6,-(h_2+h_5),-(h_1+h_3+h_4+h_6))$\\ \hline
 Tensor product frusta with $b'=b$&$(h_1,h_2,h_3,h_4,h_5,h_6,-(h_1+h_4),-(h_2+h_3+h_5+h_6))$\\ \hline
 3D Tensor Product&$(h_1,h_2,h_3,h_4,h_5,h_6,-(h_1+h_2+h_3+h_4+h_5+h_6))$\\ \hline
 Tensor product frusta with $(a',a)=\lambda(b',b)$ or $\mu(a',a)=(b',b)$ ($\lambda,\mu\geq1$) &$(h_1,h_2,h_3,h_4,h_5,h_6,-(h_1+h_2+h_4+h_5),-(h_3+h_6))$\\ \hline
 Tensor product wedges ($a'=0$) with $a\neq1$&$(h_1,h_2,h_3,h_4,h_5,-(h_1+h_4-h_6),-(h_2+h_5),-(h_3+h_6))$\\ \hline
 Tensor product wedges ($a'=0$) with $a=1$ &$(h_1,h_2,h_3,h_4,h_5,-(h_2+h_5),-(h_1+h_3+h_4))$\\ \hline 
Tensor product wedges ($b'=0$) with $b\neq1$&$(h_1,h_2,h_3,h_4,h_5,-(h_1+h_4),-(h_2+h_5-h_6),-(h_3+h_6))$\\ \hline
Tensor product wedges ($b'=0$) with $b=1$ &$(h_1,h_2,h_3,h_4,h_5,-(h_1+h_4),-(h_2+h_3+h_5))$\\ \hline
Tensor product pyramids&$(h_1,h_2,h_3,h_4,h_5,-(h_1+h_2+h_4+h_5-h_6),-(h_3+h_6))$\\ \hline
 \multicolumn{2}{|c|}{\textbf{(C) Prismatoids with triangular base $a=0$, $b>0$, $d>0$, $l>0$} }\\ \hline
Triangular frusta ($b'\neq b$) with $d\neq 1$&$(h_1,h_2,h_3,h_4,h_6,-(h_1+h_4-h_5),-(h_2+h_5),-(h_3+h_6))$\\ \hline
Triangular prism ($b'=b$) with $d\neq 1$&$(h_1,h_2,h_3,h_4,h_6,-(h_1+h_4-h_5),-(h_2+h_3+h_5+h_6))$\\ \hline
Triangular frusta ($b'\neq b$) with $d=1$&$(h_1,h_2,h_3,h_4,h_6,-(h_1+h_2+h_4),-(h_3+h_6))$\\ \hline 
Triangular prism ($b'=b$) with $d=1$&$(h_1,h_2,h_3,h_4,h_6,-(h_1+h_2+h_3+h_4+h_6))$\\ \hline 
Triangular based pyramid with $b\neq1$ and $d\neq1$&$(h_1,h_2,h_3,h_4,-(h_1+h_4+-h_5),-(h_2+h_5-h_6),-(h_3+h_6))$\\ \hline
Triangular based pyramid with $b=1$ and $d\neq1$&$(h_1,h_2,h_3,h_4,-(h_1+h_4+-h_5),-(h_2+h_3+h_5))$\\ \hline
Triangular based pyramid with $b\neq1$ and $d=1$&$(h_1,h_2,h_3,h_4,-(h_1+h_2+h_4-h_6),-(h_3+h_6))$\\ \hline
3D simplex&$(h_1,h_2,h_3,h_4,-(h_1+h_2+h_3+h_4))$\\ \hline 
\end{tabular}
\caption{Minimal Horn matrices for pairs in $\mathcal{P}$. The right column in the table contains the columns of the Horn matrix in terms of the lattice distance functions of each polytope.} \label{table:minimal}
\end{center} 
\end{table} 
\section{Appendix B} \label{ap:technical}
The next lemma gives several equations that hold between a point in $\Theta_{\T}$  and its image under $\phi_{\T}$.
We use Lemma~\ref{lem: identifiability} parts $(2)$ and $(4)$, to define the ideal of model invariants for a multinomial staged tree model.
\begin{lemma} \label{lem: identifiability}
Let $\M_{(\T,\LL)}$ be a multinomial staged tree model where $\T=(V,E)$. Fix $v\in \widetilde{V}$ and suppose $\mathrm{im}(\LL_{v})=f_{\ell,a}$ for 
$\ell\in [m]$, $a \in \mathbb{Z}_{\geq 1}$. Set $i_1, \ldots, i_{|I_{\ell}|}$ to be a fixed ordering of the elements in $I_{\ell}$. Let $(p_1,\ldots, p_n)\in \M_{(\T,\LL)}$ and $\theta=(\theta_i)_{i\in I}\in \Theta_{\T}$ be such that
    $\phi_{\T}(\theta)=(p_1,\ldots, p_n)$.
\begin{itemize}
    \item[(1)]For each $K\in \mathbb{N}^{|I_{\ell}|}$ with $|K|= a$,
    \[
     \frac{p_{[v(K)]}}{p_{[v]}} ={a \choose k_{i_{1}},\ldots, k_{i_{|I_\ell|}}}\prod_{q=1}^{|I_{\ell}|}\theta_{i_q}^{k_{i_q}}. \]
     \item[(2)]   Let $K^1,K^2,K^3,K^4\in \mathbb{N}^{|I_\ell|}$ with $|K^1|=|K^2|=|K^3|=|K^4|=a$, be such that
     $K^1+K^2=K^3+K^4$.  Define $C_{(K^1,K^2)}:= {a \choose K^1}{a \choose K^2}$ and similarly for $C_{(K^3,K^4)}$. Then
     \[C_{(K^3,K^4)}p_{[v(K^1)]}p_{[v(K^2)]}- C_{(K^1,K^2)}p_{[v(K^3)]}p_{[v(K^4)]}=0.\]
     \item[(3)]  For each $i_{q}\in I_{\ell}$, $1\leq q \leq |I_{\ell}|$            
     \begin{align*}
         \theta_{i_q} = \frac{\sum_{|K|=a, k_{i_q}\geq 1}k_{i_q}p_{[v(K)]}}{ap_{[v]}}. 
     \end{align*}
     \item[(4)] Let $w \in V$ and $\mathrm{im}(\LL_{w})= f_{\ell,b}$. For all $i_{q}\in I_{\ell}$:
     \[
     bp_{[w]}\left(\sum_{|K|=a, k_{i_q}\geq 1} k_{i_q}p_{[v(K)]}\right)-ap_{[v]}\left(\sum_{|K'|=b,k_{i_q}'\geq 1}k_{i_q}'p_{[w(K')]}\right)=0.
     \]
\end{itemize}
\end{lemma}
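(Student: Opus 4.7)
The four parts have a clear dependency structure: (1) is the base identity, (2) and (3) both follow from (1), and (4) follows from (3). The plan is to derive (1) from Lemma~\ref{lem:evaluation} and then cascade.

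For part (1), the key is that $p_{[v]} = \mathrm{eval}_{\theta}(P_{[v]})$ under the parametrisation. Since $\Psi_{\T}(P_{[v]}) = \prod_{e \in \lambda_{r,v}} \LL(e)$ by Lemma~\ref{lem:evaluation}(2), and the path from the root to $v(K)$ differs from the path to $v$ only by the edge $v \to v(K)$ labelled $\binom{a}{K} \prod_{q} s_{i_q}^{k_{i_q}}$, the ratio $\Psi_{\T}(P_{[v(K)]})/\Psi_{\T}(P_{[v]})$ equals $\binom{a}{K} \prod_q s_{i_q}^{k_{i_q}}$. Evaluating at $\theta$ gives the claim.

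For part (2), substitute (1) twice: both sides reduce to $\binom{a}{K^1}\binom{a}{K^2}\binom{a}{K^3}\binom{a}{K^4} \, p_{[v]}^2 \prod_q \theta_{i_q}^{k_{i_q}^1 + k_{i_q}^2}$ (after using $K^1+K^2 = K^3+K^4$ in the exponent and the symmetric formula $C_{(K^i,K^j)} = \binom{a}{K^i}\binom{a}{K^j}$ on the coefficients), so the difference vanishes.

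For part (3), the main step is a multinomial identity. By (1) one has
\begin{equation*}
\frac{\sum_{|K|=a,\, k_{i_q} \geq 1} k_{i_q} p_{[v(K)]}}{a p_{[v]}} = \frac{1}{a} \sum_{|K|=a,\, k_{i_q}\geq 1} k_{i_q} \binom{a}{K} \prod_{p=1}^{|I_{\ell}|} \theta_{i_p}^{k_{i_p}}.
\end{equation*}
To see this equals $\theta_{i_q}$, differentiate the multinomial expansion $\bigl(\sum_{p} \theta_{i_p}\bigr)^a = \sum_{|K|=a} \binom{a}{K} \prod_p \theta_{i_p}^{k_{i_p}}$ with respect to $\theta_{i_q}$, multiply by $\theta_{i_q}$, and invoke the sum-to-one condition $\sum_{i \in I_{\ell}} \theta_i = 1$ from $\Theta_{\T}$; this gives $a \theta_{i_q} = \sum_{|K|=a,\, k_{i_q}\geq 1} k_{i_q} \binom{a}{K} \prod_p \theta_{i_p}^{k_{i_p}}$, which is exactly what is needed. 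This is the one place where the definition of $\Theta_{\T}$ (and not merely the algebraic structure of the tree) is essential.

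For part (4), apply (3) at both $v$ and $w$ to write $\theta_{i_q}$ in two ways; clearing denominators yields the required binomial-like relation. I do not anticipate any substantial obstacle beyond bookkeeping: the multinomial differentiation trick in (3) is the only nontrivial input, and once it is in hand (1), (2), (4) are one-line consequences. The motivation for recording these four identities is that (2) is the template for the generators of $I_{\mathrm{vertices}}$ and (4) for those of $I_{\mathrm{stages}}$, so the lemma directly justifies Definition~\ref{def:idealModelInvariants}.
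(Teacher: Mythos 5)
Your proposal is correct and follows essentially the same route as the paper: part (1) as the ratio of root-to-$v(K)$ and root-to-$v$ path products, (2) by direct substitution, (3) via the multinomial identity $\sum_{|K|=a,\,k_{i_q}\ge 1}k_{i_q}\binom{a}{K}\prod_p\theta_{i_p}^{k_{i_p}}=a\theta_{i_q}\bigl(\sum_p\theta_{i_p}\bigr)^{a-1}$ combined with the sum-to-one constraint, and (4) by equating the two expressions for $\theta_{i_q}$ from (3) and cross-multiplying. The only cosmetic difference is that you obtain the identity in (3) by differentiating the multinomial expansion, whereas the paper shifts the multinomial coefficient index directly via $k_{i_q}\binom{a}{K}=a\binom{a-1}{\ldots,k_{i_q}-1,\ldots}$; these are the same computation.
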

\begin{proof}
$(1)$ Since $\M_{(\T,\LL)}$
is a probability tree, the transition probability from $v$ to $v(K)$ is the probability of arriving at $v(K)$ divided by the
probability of arriving at $v$, namely $p_{[v(K)]}/p_{[v]}$. By definition of $\M_{(\T,\LL)}$, and since $\LL_{v}(v\to v(K))={a \choose k_{i_1},\ldots, k_{i_{|I_{\ell}|}}}\prod_{\alpha=1}^{|I_\ell|}s_{i_\alpha}^{k_{i_\alpha}}$, this probability is exactly 
${a \choose k_{i_1},\ldots, k_{i_{|I_\ell|}}}\prod_{\alpha=1}^{|I_\ell|}\theta_{i_{\alpha}}^{k_{i_\alpha}}$. \newline
$(2)$ This  equality follows by direct substitution for the values from $(1)$ and by noting that the coefficients $C_{(K^1,K^2)},C_{(K^3,K^4)}$ are needed to achieve cancellation.

\noindent $(3)$ We start from the right-hand side, use $(1)$, the fact that $\sum_{i\in I_{\ell}}\theta_i = 1$, and simplification with multinomial
coefficients to arrive at $\theta_{i_{q,}}$:
\begin{align*}
  \frac{\sum_{|K|=a, k_{i_{q}}\geq 1}k_{i_{q}}p_{[v(K)]}}{ap_{[v]}} 
  &=
  \sum_{\substack{|K|=a \\
  k_{i_{q}}\geq 1}} \frac{k_{i_{q}}}{a}\frac{p_{[v(K)]}}{p_{[v]}} 
  = 
  \sum_{\substack{|K|=a \\
  k_{i_{q}}\geq 1}} \frac{k_{i_{q}}}{a}{a \choose k_{i_1},\ldots, k_{i_{|I_\ell|}}}\prod_{\alpha=1}^{|I_{\ell}|}\theta_{i_{\alpha}}^{k_{i_\alpha}}\\
  &= \sum_{\substack{|K|=a \\
  k_{i_{q}}\geq 1}} \frac{k_{i_{q}}a}{k_{i_{q}}a}{a-1 \choose k_{i_1},\ldots, k_{i_q}-1,\ldots, k_{i_{|I_{l}|}}}\theta_{i_{q}}\theta_{i_{q}}^{k_{i_q}-1} \prod_{\substack{\alpha=1\\ \alpha \neq q}}^{|I_{\ell}|}\theta_{i_\alpha}^{k_{i_\alpha}} \\
  &= \theta_{i_{q}}\sum_{|K|=a-1}{a-1 \choose k_{i_1},\ldots, k_{i_q},\ldots, k_{i_{|I_{l}|}}} \prod_{\alpha=1}^{|I_\ell|}\theta_{i_{\alpha}}^{k_{i_\alpha}} \\
  &= \theta_{i_q} (\sum_{\alpha=1}^{|I_{\ell}|}\theta_{i_{\alpha}})^{a-1}= \theta_{i_q}.
\end{align*}
$(4)$ Applying part $(3)$ to $i_{q}$ for $v$ and $w$ separately yields
\[ 
\theta_{i_q} = \frac{\sum_{|K|=a, k_{i_{q}}\geq 1}k_{i_{q}}p_{[v(K)]}}{ap_{[v]}} = \frac{\sum_{|K'|=b,k_{i_q}'\geq 1}k_{i_q }'p_{[w(K')]}}{bp_{[w]}}. 
\] After cross multiplication we get the desired equation in $(4)$.
\end{proof}

\subsection{Binary multinomial staged trees}
From this point on we assume  in $(\T,\LL)$ is a binary multinomial staged tree, and modify our notation according to this assumption.

\begin{lemma}\label{conj:toBeProven}
Let $(\T,\LL)$ be a binary multinomial staged tree where $\T=(V,E)$ and let  $v\in \widetilde{V}$ be such that $\mathrm{im}(\LL_v)=f_{\ell,a}$. 

Fix $k_0\in \{0,\ldots, a\}$.
The following equalities hold in $\mathbb{R}[P_j: j\in J]/I_{\M(\T,\LL)}$:
\begin{align}
    &{a \choose k_0+k}P_{[v(k_0,a-k_0)]} l_1^{k_0+k}l_2^{a-(k_0+k)}={a \choose k_0}P_{[v(k_0+k,a-(k_0+k))]} l_1^{k_0}l_2^{a-k_0} \;\; \label{eq1},
\end{align}
for $1\leq k\leq a-k_0$.
\begin{align}
    &{a \choose k_0-k}P_{[v(k_0,a-k_0)]} l_1^{k_0-k}l_2^{a-(k_0-k)}={a \choose k_0}P_{[v(k_0-k,a-(k_0-k))]} l_1^{k_0}l_2^{a-k_0} \;\; \label{eq2},
\end{align}
for $1\leq k\leq k_0$.
where $l_1:=\sum_{k=1}^a k P_{[v(k,a-k)]}$ and   $l_2:=\sum_{k=1}^a k P_{[v(a-k,k)]}$.
\end{lemma}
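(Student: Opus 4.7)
The plan is to deduce both \eqref{eq1} and \eqref{eq2} from a single cleaner auxiliary identity, which in turn follows from an even more basic one-step relation.

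\textbf{Step 1 (one-step base identity).} First, for every $0\le j\le a-1$, I would establish
\[
\binom{a}{j+1} P_{[v(j,a-j)]}\, l_1 \;\equiv\; \binom{a}{j} P_{[v(j+1,a-j-1)]}\, l_2 \pmod{I_{\M(\T,\LL)}}.
\]
Expand $l_1 = \sum_{r=1}^{a} r\, P_{[v(r,a-r)]}$ and apply the vertex generator from $I_{\mathrm{vertices}}$ at $v$ to each term, using the balanced quadruple $K^1=(j,a-j)$, $K^2=(r,a-r)$, $K^3=(j+1,a-j-1)$, $K^4=(r-1,a-r+1)$, which satisfies $K^1+K^2=K^3+K^4$. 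After cancelling $\binom{a}{r-1}$ and using $\binom{a}{r}/\binom{a}{r-1}=(a-r+1)/r$, one obtains termwise
\[
\binom{a}{j+1}\, r\, P_{[v(j,a-j)]} P_{[v(r,a-r)]} \;\equiv\; \binom{a}{j}\, (a-r+1)\, P_{[v(j+1,a-j-1)]} P_{[v(r-1,a-r+1)]}.
\]
Summing over $r=1,\dots,a$, the left-hand side is exactly $\binom{a}{j+1} P_{[v(j,a-j)]}\, l_1$, while the right-hand side, after the reindexing $s=a-r+1$, becomes $\binom{a}{j} P_{[v(j+1,a-j-1)]}\, l_2$.

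\textbf{Step 2 (iterated form).} Next I would prove, by induction on $k\ge 0$, the intermediate identity
\[
\binom{a}{k_0+k}\, P_{[v(k_0,a-k_0)]}\, l_1^{k} \;\equiv\; \binom{a}{k_0}\, P_{[v(k_0+k,a-k_0-k)]}\, l_2^{k} \pmod{I_{\M(\T,\LL)}}.
\]
The case $k=0$ is trivial. For the step from $k$ to $k+1$, multiply the inductive hypothesis by $\binom{a}{k_0+k+1}\, l_1$, and multiply the Step~1 identity at $j=k_0+k$ by $\binom{a}{k_0}\, l_2^{k}$. The right-hand side of the first coincides with the left-hand side of the second, so transitivity yields, after cancellation of the common nonzero integer $\binom{a}{k_0+k}$, the statement at $k+1$. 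The hypothesis $k_0+k\le a$ in \eqref{eq1} ensures that Step~1 can be applied at $j=k_0+k\le a-1$ at every stage.

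\textbf{Step 3 (conclusion).} Multiplying the Step~2 identity by $l_1^{k_0}\, l_2^{a-k_0-k}$ yields \eqref{eq1}. To get \eqref{eq2}, apply \eqref{eq1} with $k_0$ replaced by $k_0-k$ and the same $k$; the resulting identity is exactly \eqref{eq2} after rearrangement of the two sides.

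The main technical obstacle is Step~1: one must align the combinatorial coefficients coming from the vertex relations with those already inside the definition of $l_1$, and then recognise the reindexed sum on the right as $l_2$. Once this one-step identity is secured, Steps~2 and~3 are routine bookkeeping.
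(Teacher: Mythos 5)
Your proposal is correct, and its core mechanism is the same as the paper's: extract a one-step relation from the Veronese-type generators of $I_{\mathrm{vertices}}$ by pairing $(j,a-j)$ with each $(r,a-r)$, summing over $r$ so that the two sums assemble into $l_1$ and $l_2$, and then iterate by induction. Your Step~1 identity is exactly the paper's auxiliary lemma $(a-j)P_{[v(j,a-j)]}l_1=(j+1)P_{[v(j+1,a-j-1)]}l_2$ rewritten with the binomial coefficients attached, since $\binom{a}{j+1}=\binom{a}{j}\tfrac{a-j}{j+1}$. Two organizational choices in your write-up are genuine improvements over the paper's version. First, by running the induction on the reduced identity $\binom{a}{k_0+k}P_{[v(k_0,a-k_0)]}l_1^{k}=\binom{a}{k_0}P_{[v(k_0+k,a-k_0-k)]}l_2^{k}$ and only multiplying by $l_1^{k_0}l_2^{a-k_0-k}$ at the very end, you avoid the paper's habit of writing $l_1^{-1}$ and $l_2^{-1}$ in intermediate steps of the induction — formal inverses that do not exist in the quotient ring $\mathbb{R}[P_j: j\in J]/I_{\M(\T,\LL)}$ and have to be read as bookkeeping shorthand there. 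Second, you obtain \eqref{eq2} from \eqref{eq1} by the substitution $k_0\mapsto k_0-k$ rather than by a second, separate induction as the paper does; the range check ($1\le k\le a-(k_0-k)$ holds automatically, and $k\le k_0$ keeps $k_0-k\ge 0$) goes through as you indicate. Both arguments are sound; yours is the tidier of the two.
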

\noindent For the proof of Lemma \ref{conj:toBeProven}, we use Lemma \ref{lem:auxiliaryLemma}.
\begin{lemma}\label{lem:auxiliaryLemma}
Under the hypotheses from Lemma \ref{conj:toBeProven}, the following equality holds in $\mathbb{R}[P_j: j\in J]/I_{\M(\T,\LL)}$:
\begin{align}\label{eq:auxiliary1}
    (a-k_0) P_{[v(k_0,a-k_0)]} l_1=(k_0+1)P_{[v(k_0+1,a-(k_0+1))]} l_2.
\end{align}
\end{lemma}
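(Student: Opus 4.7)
The plan is to use the generators of $I_{\mathrm{vertices}}$ (restricted to the vertex $v$) to rewrite each term of $(a-k_0)P_{[v(k_0,a-k_0)]}l_1$ individually, term by term, so that after reindexing the sum becomes $(k_0+1)P_{[v(k_0+1,a-(k_0+1))]}l_2$. Write $Q_k:=P_{[v(k,a-k)]}$ for brevity; then $l_1=\sum_{k=1}^{a}kQ_k$, and the substitution $j=a-k$ gives $l_2=\sum_{j=0}^{a-1}(a-j)Q_j$. So the identity to prove modulo $I_{\M(\T,\LL)}$ becomes
\[
(a-k_0)\,Q_{k_0}\sum_{k=1}^{a}kQ_k \;\equiv\; (k_0+1)\,Q_{k_0+1}\sum_{j=0}^{a-1}(a-j)Q_j.
\]

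The key step is the application of a single generator of $I_{\mathrm{vertices}}$ for each $k\in\{1,\dots,a\}$, taking $K^1=(k_0,a-k_0)$, $K^2=(k,a-k)$, $K^3=(k_0+1,a-(k_0+1))$, $K^4=(k-1,a-(k-1))$; these satisfy $K^1+K^2=K^3+K^4$, so
\[
\tbinom{a}{k_0+1}\tbinom{a}{k-1}\,Q_{k_0}Q_k \;\equiv\; \tbinom{a}{k_0}\tbinom{a}{k}\,Q_{k_0+1}Q_{k-1}\pmod{I_{\M(\T,\LL)}}.
\]
A quick computation of the binomial ratios $\binom{a}{k_0}/\binom{a}{k_0+1}=(k_0+1)/(a-k_0)$ and $\binom{a}{k}/\binom{a}{k-1}=(a-k+1)/k$ transforms this into the cleaner form
\[
k(a-k_0)\,Q_{k_0}Q_k \;\equiv\; (k_0+1)(a-k+1)\,Q_{k_0+1}Q_{k-1}.
\]

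With this congruence in hand, summing over $k=1,\dots,a$ and reindexing the right-hand side by $j=k-1$ gives the desired equality directly. The only slightly delicate point is that the range $k\in\{1,\dots,a\}$ for $l_1$ matches, under $j=k-1$, precisely the range $j\in\{0,\dots,a-1\}$ for $l_2$, so no boundary terms are lost. I do not anticipate any real obstacle here: once one identifies the correct quadruple $(K^1,K^2,K^3,K^4)$ to feed into the vertex relation, the rest is purely bookkeeping with binomial coefficients.
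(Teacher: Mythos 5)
Your proposal is correct and follows essentially the same route as the paper's proof: both apply the $I_{\mathrm{vertices}}$ generator with the quadruple $K^1=(k_0,a-k_0)$, $K^2=(k,a-k)$, $K^3=(k_0+1,a-(k_0+1))$, $K^4=(k-1,a-(k-1))$, simplify the binomial coefficients to the relation $k(a-k_0)P_{[v(k_0,a-k_0)]}P_{[v(k,a-k)]}=(k_0+1)(a-(k-1))P_{[v(k_0+1,a-(k_0+1))]}P_{[v(k-1,a-(k-1))]}$, and then sum over $k=1,\dots,a$ with the reindexing $j=k-1$. The only difference is cosmetic (you divide by the binomial ratios where the paper factors out the common constant), so there is nothing further to add.
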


\begin{proof}
By the part of the Definition~\ref{def:idealModelInvariants} 
involving $I_{\mathrm{vertices}}$, we see that the equality
$$
    {a \choose k_0\!+\!1}{a \choose k\!-\!1}P_{[v(k_0,a-k_0)]}P_{[v(k,a-k)]}=
    {a \choose k_0}{a \choose k} P_{[v(k_0+1,a-(k_0+1))]}P_{[v(k-1,a-(k-1))]}
    \label{eq:multibinomials}
$$
holds $\mathbb{R}[P_j: j\in J]/I_{\M(\T,\LL)}$.
Note that
\begin{align*}
    {a \choose k_0+1}{a \choose k-1} &= \frac{a!a!}{(k_0+1)!(a-k_0)!k!(a-(k-1))!} (a-k_0)k\; , \text{ and }\\
    {a \choose k_0}{a \choose k}&= \frac{a!a!}{(k_0+1)!(a-k_0)!k!(a-(k-1))!} (k_0+1)(a-(k-1)).
\end{align*}
Thus, we may cancel the constant $a!a! \;/\; (k_0+1)!(a-k_0)!k!(a-(k-1))! $ from the equality we started with in this proof, to obtain the simplified expression
\begin{align*}
    P_{[v(k_0,a-k_0)]}P_{[v(k,a-k)]} =\frac{(k_0+1)(a-(k-1))}{(a-k_0)k}P_{[v(k_0+1,a-(k_0+1))]}P_{[v(k-1,a-(k-1))]}.
\end{align*}
Using this identity and the definition of $l_1$ in Lemma~\ref{conj:toBeProven}, it follows that
\begin{align*}
    (a-k_0) P_{[v(k_0,a-k_0)]} l_1&=(a-k_0) P_{[v(k_0,a-k_0)]}\sum_{k=1}^a k P_{[v(k,a-k)]}\\
    &=\sum_{k=1}^a (a-k_0)k P_{[v(k_0,a-k_0)]} P_{[v(k,a-k)]}\\
    &=\sum_{k=1}^a (k_0+1)(a-(k-1))P_{[v(k_0+1,a-(k_0+1))]}P_{[v(k-1,a-(k-1))]}\\
    &=(k_0+1)P_{[v(k_0+1,a-(k_0+1))]} l_2.
\end{align*}
\end{proof}
\noindent We are now ready to prove Lemma \ref{conj:toBeProven}.
\begin{proof}
Let us first prove equality (\ref{eq1}).
We will do this by mathematical induction on $k$. 
First we show that (\ref{eq1}) holds for $k=1$:
\begin{align*}
    {a \choose k_0+1}P_{[v(k_0,a-k_0)]} l_1^{k_0+1}l_2^{a-(k_0+1)}
    &\stackrel{(\ref{eq:auxiliary1})}{=}{a \choose k_0+1}\frac{k_0+1}{a-k_0} P_{[v(k_0+1,a-(k_0+1))]} l_1^{k_0}l_2^{a-k_0} \\
    &= {a \choose k_0}P_{[v(k_0+1,a-(k_0+1))]} l_1^{k_0}l_2^{a-k_0}.
\end{align*}
Let us now suppose that (\ref{eq1}) holds for $k$, and prove it for $k+1$.
\begin{align*}
    &{a \choose k_0+k+1}P_{[v(k_0,a-k_0)]} l_1^{k_0+k+1}l_2^{a-(k_0+k+1)}\\
    &\qquad=\frac{a!(a-(k_0+k))}{(k_0+k+1)(k_0+k)!(a-(k_0+k))!}P_{[v(k_0,a-k_0)]} l_1^{k_0+k+1}l_2^{a-(k_0+k+1)}\\
    &\qquad={a \choose k_0+k}\frac{a-(k_0+k)}{k_0+k+1}P_{[v(k_0,a-k_0)]}l_1l_1^{k_0+k} l_2^{a-(k_0+k+1)} l_2^{-1}
\end{align*}
Using (\ref{eq1}) for $k$, we further simplify to
\begin{align*}
   & \textoverset{{hyp.(\ref{eq1})}}{=}& {a \choose k_0}P_{[v(k_0+k,a-(k_0+k))]} l_1^{k_0}l_2^{a-k_0}\frac{a-(k_0+k)}{k_0+k+1}l_1  l_2^{-1}\\
   & \textoverset{{(\ref{eq:auxiliary1})}}{=} & {a \choose k_0} \frac{a-(k_0+k)}{k_0+k+1} \frac{k_0+k+1}{a-(k_0+k)} P_{[v(k_0+k+1,a-(k_0+k+1))]}l_1^{k_0}l_2 l_2^{a-k_0} l_2^{-1}\\
    & \textoverset{}{=}& {a \choose k_0} P_{[v(k_0+k+1,a-(k_0+k+1))]} l_1^{k_0} l_2^{a-k_0}.
\end{align*}
Let us now prove equality (\ref{eq2}).
By (\ref{eq:auxiliary1}), we have
    \begin{align*}
        (a-k_0-1) P_{[v(k_0-1,a-(k_0-1))]} l_1=k_0 P_{[v(k_0,a-k_0)]} l_2.
        \end{align*}
    We will again use mathematical induction on $k$.
First we show that (\ref{eq2}) holds for $k=1$:
\begin{align*}
&{a \choose k_0-1}P_{[v(k_0,a-k_0)]} l_1^{k_0-1}l_2^{a-(k_0-1)}\\
&\qquad=
{a \choose k_0-1}\frac{a-(k_0-1)}{k_0} P_{[v(k_0-1,a-(k_0-1))]} l_1 l_2^{-1} l_1^{k_0-1}l_2^{a-(k_0-1)}\\
&\qquad={a \choose k_0} P_{[v(k_0-1,a-(k_0-1))]}  l_1^{k_0} l_2^{a-k_0}.
\end{align*}
Let us now suppose that (\ref{eq2}) holds for $k$, and prove it for $k+1$.
\begin{align*}
    & \textoverset{}{}&{a \choose k_0-(k+1)}P_{[v(k_0,a-k_0)]} l_1^{k_0-(k+1)}l_2^{a-(k_0-(k+1))}\\
    &\textoverset{}{=}&\!\!\!\!{a \choose k_0-k}\frac{k_0-k}{a-(k_0-(k+1))}P_{[v(k_0,a-k_0)]}l_1^{k_0-k} l_1^{-1} l_2^{a-(k_0-k)} l_2\\
    &\textoverset{hyp.(\ref{eq2})}{=} &{a \choose k_0}P_{[v(k_0-k,a-(k_0-k))]} l_1^{k_0}l_2^{a-k_0} \frac{k_0-k}{a-(k_0-(k+1))} l_1^{-1} l_2\\
    &\textoverset{(\ref{eq:auxiliary1})}{=}&{a \choose k_0} \frac{k_0-k}{a-(k_0-(k+1))}\frac{a-(k_0-(k+1))}{k_0-k} l_1 P_{[v(k_0-k-1,a-(k_0-k-1))]} l_1^{k_0}l_2^{a-k_0}l_1^{-1}\\
    &\textoverset{}{=}&{a \choose k_0}P_{[v(k_0-(k+1),a-(k_0-(k+1)))]}l_1^{k_0}l_2^{a-k_0}.
\end{align*}
\end{proof}

\begin{lemma} \label{lem:finally}
Under the same hypotheses as in Lemma~\ref{conj:toBeProven}, the following equalities hold in $\mathbb{R}[P_j: j\in J]/I_{\M(\T,\LL)}$:
\begin{equation}
  {a \choose k_0}\bigg (\frac{l_1}{a P[v]}\bigg )^{k_0}\bigg (\frac{l_2}{a P[v]}\bigg )^{a-k_0}=\frac{P_{[v(k_0,a-k_0)]}}{P_{[v]}}.
\end{equation}
\end{lemma}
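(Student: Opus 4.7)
The plan is to sum the two families of identities from Lemma~\ref{conj:toBeProven} against the binomial weights ${a \choose k_1}$ and apply the binomial theorem. I would first record the algebraic identity
\[
l_1 + l_2 = a P_{[v]}
\]
in $\mathbb{R}[P_j: j \in J]/I_{\M(\T,\LL)}$, which follows immediately by pairing each $P_{[v(k,a-k)]}$ in $l_1$ with the corresponding summand in $l_2$ via the reindexing $k \mapsto a-k$, together with the defining relation $P_{[v]} = \sum_{k=0}^a P_{[v(k,a-k)]}$.

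The next step is to unify (\ref{eq1}) and (\ref{eq2}) into a single statement: for every $k_1 \in \{0,1,\ldots,a\}$,
\[
{a \choose k_1} P_{[v(k_0, a-k_0)]}\, l_1^{k_1} l_2^{a-k_1} = {a \choose k_0} P_{[v(k_1, a-k_1)]}\, l_1^{k_0} l_2^{a-k_0}.
\]
The cases $k_1 > k_0$ and $k_1 < k_0$ follow from (\ref{eq1}) and (\ref{eq2}) after reindexing $k_1 = k_0 \pm k$, and the case $k_1 = k_0$ is a tautology. Summing this uniform identity over $k_1 \in \{0,\ldots,a\}$, the left-hand side collapses via the binomial theorem and the identity $l_1 + l_2 = aP_{[v]}$ to $P_{[v(k_0, a-k_0)]} (aP_{[v]})^a$, while the right-hand side simplifies to ${a \choose k_0} l_1^{k_0} l_2^{a-k_0} P_{[v]}$ using $\sum_{k_1=0}^a P_{[v(k_1,a-k_1)]} = P_{[v]}$.

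To finish, I would pass to the localisation at $\mathbf{P} = \prod_{v \in V} P_{[v]}$ (as in the proof of Theorem~\ref{thm:saturation}), in which $P_{[v]}$ is invertible, and divide through by $(aP_{[v]})^a \, P_{[v]}$ to obtain the stated equality. No serious obstacle is expected: essentially all the algebraic content has already been isolated in Lemma~\ref{conj:toBeProven} and Lemma~\ref{lem:auxiliaryLemma}, and the only minor care point is the bookkeeping that confirms (\ref{eq1}) and (\ref{eq2}) jointly cover every index $k_1 \neq k_0$, including the endpoints $k_1 = 0$ and $k_1 = a$.
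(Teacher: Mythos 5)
Your proposal is correct and follows essentially the same route as the paper: establish $l_1+l_2=aP_{[v]}$, expand $P_{[v(k_0,a-k_0)]}(l_1+l_2)^a$ by the binomial theorem, and use equations (\ref{eq1}) and (\ref{eq2}) of Lemma~\ref{conj:toBeProven} termwise to collapse the sum to ${a\choose k_0}l_1^{k_0}l_2^{a-k_0}P_{[v]}$. The only (harmless) cosmetic differences are that you phrase the two identities as a single uniform statement over all $k_1$ rather than splitting the sum into three ranges, and that you make explicit the localisation needed to divide by $(aP_{[v]})^aP_{[v]}$, which the paper leaves implicit.
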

\begin{proof}
First, note that $a P_{[v]}=(l_1+l_2)$. Indeed
\begin{align*}
l_1+l_2&=(P_{[v(1,a-1)]}+2P_{[v(2,a-2)]}+\dotsm+(a-1) P_{[v(a-1,1)]}+aP_{[v(a,0)]})\\
&\qquad+(aP_{[v(0,a)]}+(a-1)P_{[v(1,a-1)]}+
\dotsm +2P_{[v(a-2,2)]}+P_{[v(a-1,1)]})\\
&=a(P_{[v(a,0)]}+P_{[v(1,a-1)]}+P_{[v(2,a-2)]}+\dotsm+P_{[v(1,a-1)]}+P_{[v(0,a)]})=aP_{[v]}.
\end{align*}
Therefore we have $P_{[v(k_0,a-k_0)]} (aP_{[v]})^a=P_{[v(k_0,a-k_0)]} (l_1+l_2)^a$. 
Now, by Lemma \ref{conj:toBeProven}, we obtain:
\begin{align*}
P_{[v(k_0,a-k_0)]} (l_1+l_2)^a &=P_{[v(k_0,a-k_0)]} \sum_{k=0}^a {a \choose k}l_1^k l_2^{a-k}\\
&=\sum_{k=0}^a {a \choose k}P_{[v(k_0,a-k_0)]} l_1^k l_2^{a-k}\\
&=\sum_{k=1}^{k_0} {a \choose k_0-k}P_{[v(k_0,a-k_0)]} l_1^{k_0-k} l_2^{a-(k_0-k)}\\
&\qquad +{a \choose k_0}P_{[v(k_0,a-k_0)]} l_1^{k_0} l_2^{a-k_0}\\ 
&\qquad+\sum_{k=1}^{a-k_0}{a \choose k_0+k}P_{[v(k_0,a-k_0)]} l_1^{k_0+k} l_2^{a-(k_0+k)}\\
&={a \choose k_0}l_1^{k_0} l_2^{a-k_0}\left( \sum_{k=1}^{k_0} P_{[v(k_0-k,a-(k_0-k))]}+P_{[v(k_0,a-k_0)]}\right.\\
&\qquad \left. + \sum_{k=1}^{a-k_0} P_{[v(k_0+k,a-(k_0+k))]} \right)\\
&={a \choose k_0} l_1^{k_0} l_2^{a-k_0} P_{[v]}.
\end{align*}
\end{proof}

\end{document}